\documentclass[12pt]{amsart}

\usepackage{amsthm}
\newtheorem{theorem}{Theorem}[section]
\newtheorem{lemma}[theorem]{Lemma}
\newtheorem{proposition}[theorem]{Proposition}
\newtheorem{corollary}[theorem]{Corollary}
\theoremstyle{definition}
\newtheorem{definition}[theorem]{Definition}

\newtheorem{example}[theorem]{Example}
\theoremstyle{remark}
\newtheorem{remark}[theorem]{Remark}

\counterwithin*{section}{part}

\usepackage{amssymb}
\usepackage{empheq}
\usepackage{stmaryrd}
\usepackage{url}
\usepackage{esvect}
\usepackage{float}

\DeclareMathOperator{\N}{\mathbb{N}}

\DeclareMathOperator{\R}{\mathbb{R}}
\DeclareMathOperator*{\argmin}{\arg\!\min} 

\newcommand{\CAT}{{\rm{CAT}(0)}}
\newcommand{\norm}[1]{\lVert #1 \rVert}
\newcommand{\svv}[4]{\left\langle \vv{#1#2},\vv{#3#4}\right\rangle}
\newcommand{\lowerbound}{\frac{1}{\gamma - 8 \eta}}
\newcommand{\upperbound}{\frac{1}{4\eta}}

\usepackage[left=2.0cm,%
                right=2.0cm,%
                top=2.5cm,%
                bottom=3.5cm,%
                headheight=12pt,%
                a4paper]{geometry}%

\begin{document}

\title[On strongly quasiconvex pseudomonotone equilibrium problems]{On strongly quasiconvex pseudomonotone equilibrium problems in Hadamard spaces}

\author[L.M. Despr\'es and N. Pischke]{Luisa Marie Despr\'es${}^{\MakeLowercase a}$ and Nicholas Pischke${}^{\MakeLowercase b}$}
\date{\today}
\maketitle
\vspace*{-5mm}
\begin{center}
{\scriptsize 
${}^a$ Department of Mathematics, Technische Universit\"at Darmstadt,\\
Schlossgartenstra\ss{}e 7, 64289 Darmstadt, Germany,\\ 
${}^b$ Department of Computer Science, University of Bath,\\
Claverton Down, Bath, BA2 7AY, United Kingdom,\\
E-mails: luisa.despres@stud.tu-darmstadt.de, nnp39@bath.ac.uk}
\end{center}

\maketitle
\begin{abstract}
We study two proximal point type methods for finding equilibrium points of pseudomonotone and strongly quasiconvex bifunctions. Extending results by A.\ Iusem and F.\ Lara, we prove the strong convergence of these methods over general complete geodesic metric spaces of nonpositive curvature, so-called Hadamard spaces. Our arguments are quite elementary and in particular effective, yielding sublinear non-asymptotic guarantees for the distance of the iterates towards the solution. These quantitative results are novel even in the context of Euclidean spaces, the original setting of the work by Iusem and Lara, and the simplicity of our arguments allows us to either weaken or even fully discharge various assumptions featuring in this previous work. We also provide an existence result for solutions of equilibrium problems generated by suitably semicontinuous, pseudomonotone and strongly quasiconvex bifunctions over general Hadamard spaces, and derive from this that every lower-semicontinuous strongly quasiconvex function over a Hadamard space has a minimizer, answering a question of the second author.
\end{abstract}
\noindent
{\bf Keywords:} Equilibrium problems; Strong quasiconvexity;  Hadamard spaces; Rates of convergence; Proof mining\\ 
{\bf MSC2020 Classification:} 49J40, 47J25, 90C26, 03F10

\section{Introduction}

\subsection{Background and motivation}

Equilibrium problems, that is problems of the form 
\begin{equation*}
\text{find } x^{*}\in K\text{ such that } f(x^{*},y) \geq 0 \text{ for all } y\in K,
\end{equation*}
where $f: K \times K \rightarrow \R$ is a suitable bifunction (e.g.\ satisfying appropriate continuity, convexity, and monotonicity conditions) and $K \subseteq X$ is a constraint set (commonly chosen closed and convex) over some space $X$, play a fundamental role in modern applied mathematics. Going back to the work of Fan \cite{Fan72}, convex and more recently also in particular nonconvex equilibrium problems have been studied deeply. We refer e.g.\ to  \cite{Blum1994,IusemKassaySosa09,IusemSosa03,Lopez12, Oettli97}, and respectively to \cite{Cotrina18,Flores01,IusemLara19}, among many others.

In the present paper, we study different approximation methods for such equilibrium problems. These methods, in one way or the other, are derived from the seminal proximal point algorithm originating in the work of Martinet \cite{Martinet70}, Rockafellar \cite{RockTyr76} as well as Brezis and Lions \cite{BrezisLions78}, originally devised to solve convex minimization problems (and, in a generalized formulation, inclusion problems of maximally monotone operators). Concretely, over a Hilbert space $(X,\langle\cdot,\cdot\rangle)$ with norm $\norm{\cdot}$, and given a lower-semicontinuous convex function $h:K\to\mathbb{R}$ over a closed and convex set $K\subseteq X$, the proximal point algorithm iteratively compute points $\{x_k\}_k$ via
\[
x_{k+1}:=\mathrm{Prox}_{\beta_k h}(x_k)
\]
for a given starting point $x_0\in K$, a sequence $\{\beta_k\}_k\subseteq (0,\infty)$ of parameters satisfying suitable conditions, and where $\mathrm{Prox}_{\beta h}$ is the proximal map of $h$, defined by 
\[
\mathrm{Prox}_{\beta h}(x) :=\argmin_{y\in K}\left\{h(y) + \frac{1}{2\beta}\norm{y-x}^2\right\}.
\]
As $h(\cdot) + \frac{1}{2\beta}\norm{\cdot-x}^2$ is strongly convex whenever $h$ is convex, this so-called proximal subproblem has a unique solution so that $\mathrm{Prox}_{\beta h}(x)$ is a well-defined function. The sequence $\{x_k\}_k$ generated by this algorithm converges weakly to a minimizer for $h$ if there exists one (see e.g.\ \cite{BauschCom2017} for a modern canonical reference). If the convexity condition on $h$ is strengthened to strong convexity, one even obtains strong convergence to the unique minimizer of $h$.

Proximal point type algorithms for equilibrium problems have been considered for convex bifunctions at least since the well-known work of Moudafi \cite{Moudafi99} and Moudafi and Th{\'e}ra \cite{MoudafiThera99}, and have subsequently been extended in various ways (we refer in particular to \cite{BURACHIK2012,IusemSosa10,Khatibzadeh2016,Konnov03,Moudafi03}, just to name a few).

At the core of it, the perhaps most prominent proximal point type algorithm for equilibrium problems (already going back to \cite{Moudafi99,MoudafiThera99}) produces an approximating sequence $\{x_k\}_k$ by using the current iterate $x_k$ to define a regularization of the bifunction \emph{as a whole} via
\[
f_k(x,y) := f(x,y) + \frac{1}{\beta_k} \langle x-x_k,y-x\rangle,
\]
and then solving the equilibrium problem associated with $f_k$ to generate $x_{k+1}$. The set-valued map
\[
J^f_\beta(x):=\left\{z\in K\mid f(z,y)+\frac{1}{\beta} \langle z-x,y-z\rangle\geq 0\text{ for all }y\in K\right\},
\]
associating with every point $x$ and parameter $\beta$ the set of solutions of this corresponding regularized equilibrium problem, is also called the \emph{resolvent of the bifunction} $f$ (see e.g.\ \cite{Blum1994,CombettesHirstoaga2005,MoudafiThera99}) and is single-valued whenever $f$ is monotone as well as convex and lower-semicontinuous in its right argument (cf.\ \cite[Lemma 2.12]{CombettesHirstoaga2005}), so that, using this notion, the regularized selection method sketched above reduces to the more familiar pattern $x_{k+1}:=J^f_{\beta_k}(x_k)$ of the proximal point algorithm discussed above.

A second type of proximal point algorithm applies a regularization akin to the proximal subproblem only to the second (convex) argument of the bifunction, with the first argument fixed to the current iterate $x_k$, and so defines
\[
x_{k+1}:=\argmin_{y\in K}\left\{f(x_k,y) + \frac{1}{2\beta_k}\norm{y-x_k}^2\right\}=\mathrm{Prox}_{\beta_kf(x_k,\cdot)}(x_k).
\]
In fact, to our knowledge, this second type of proximal point method for equilibrium problems was only recently introduced in the work of Iusem and Lara \cite{IusemLara2021} (see also \cite{HieuDuongThai2021,IusemMohebbi2020} and the very recent work \cite{BoufiFadilRoubi2026}), to solve equilibrium problems with relaxed convexity assumptions, which will also be the main concern of this paper.

Concretely, the work \cite{IusemLara2021} studies the two methods above for bifunctions which exchange the convexity requirement for the right argument of $f$ with \emph{strong quasiconvexity}. The class of quasiconvex functions, that is functions $h:K\to\mathbb{R}$ which satisfy
\[
h((1-\lambda) x+\lambda y)\leq \max\{h(x),h(y)\}
\]
for all $x,y\in K$ and $\lambda\in [0,1]$, widely relaxes convexity and covers many practically interesting functions that fail to be convex. However, as highlighted and discussed in detail in e.g.\ \cite{Lara2022}, this class simultaneously suffers from a range of problems in regards to iterative procedures, in particular the proximal point algorithm. Starting with the work of Lara \cite{Lara2022}, considerable focus has recently been placed on the class of strongly quasiconvex functions, that is functions $h:K\to\mathbb{R}$ which satisfy
\[
h((1-\lambda) x+\lambda y)\leq \max\{h(x),h(y)\}-\lambda(1-\lambda)\frac{\gamma}{2}\norm{x-y}^2
\]
for all $x,y\in K$ and $\lambda\in [0,1]$, for a given modulus $\gamma>0$. This class already goes back to the work of Polyak \cite{Polyak1966} and has various benefits over the class of quasiconvex functions. In particular, minimizers of such functions are unique if they exist and, partly as a consequence, many iterative procedures known from convex settings are still reasonably well-behaved. Indeed, in \cite{Lara2022}, Lara studies the proximal point method for strongly quasiconvex functions and, while the proximal map in this context is generally only set-valued (cf.\ \cite[Remark 6]{Lara2022}) so that the iterative step then takes the form of a selection $x_{k+1} \in \mathrm{Prox}_{\beta_k h}(x_k)$, he proves a convergence theorem over Euclidean space $\R^d$ (cf.\ \cite[Theorem 10]{Lara2022}). It is then for (pseudomonotone and suitably Lipschitz) bifunctions which are only strongly quasiconvex in their second argument that Iusem and Lara in \cite{IusemLara2021} establish the convergence of the two proximal point type methods discussed above over Euclidean space $\R^d$ (cf.\ \cite[Theorem 3.1]{IusemLara2021} and \cite[Corollary 3.2]{IusemLara2021}), building on the previous work of Lara \cite{Lara2022}.

All the while, the class is not too restrictive, containing various nonconvex functions of practical interest such as the Euclidean norm (cf.\ \cite[Theorem 2]{Jov1996}) which is not strongly convex on any bounded convex set, the square root of the Euclidean norm which is strongly quasiconvex on each ball (cf.\ \cite[Theorem 17]{Lara2022}) without being differentiable, or even functions such as $\max\{\sqrt{\norm{\cdot}},\norm{\cdot}^2-k\}$ for $k\in\mathbb{N}$ which are strongly quasiconvex without even being convex (cf.\ \cite[Remark 18]{Lara2022}). We refer to \cite{Lara2022} and the more recent survey \cite{GLM2025} for further discussions on the relationships between strongly quasiconvex functions and various other classes of (non)convex functions.

\subsection{The contributions of the present paper and related work}

In the present paper, we study the two proximal point type methods sketched above for strongly quasiconvex pseudomonotone equilibrium problems over Hadamard spaces, that is complete geodesic metric spaces of nonpositive curvature. Also known as (complete) $\CAT$-spaces, this general class of nonlinear spaces contains and unifies spaces such as (infinite-dimensional) Hilbert spaces, $\mathbb{R}$-trees and Hadamard manifolds, that is complete simply connected Riemannian manifolds of nonpositive sectional curvature, among many more.

While the fundamental concepts of Hadamard spaces can already be dated back to a paper by Wald \cite{Wald36}, the notion reached maturity through the work of Alexandrov \cite{Aleksandrov51,Alexandrov57} (after which such spaces are also called spaces of nonpositive curvature in the sense of Alexandrov). The moniker $\CAT$ derives from the influential work of Gromov \cite{Gromov1987}. We refer to \cite{Bacak2014,Bacak23,BridsonHaefliger1999} for further historical remarks, and in general refer to \cite{BridsonHaefliger1999} for an authoritative treatment of these spaces at large. Next to their traditional role in geometry, Hadamard spaces are a suitable framework for extending many concepts from convex analysis and optimization on linear spaces to nonlinear contexts, as is the topic of this paper, and we refer to \cite{Bacak2014,Bacak23} for discussions in that direction. We provide a technical overview of the key aspects of Hadamard spaces as relevant to the present paper in Section \ref{subsec_Hadmard_spaces} below.

The proximal point method for convex minimization was previously investigated on various subclasses of these spaces by a range of authors, such as Ferreira and Oliveira \cite{FerreiraOliveira2002} on Hadamard manifolds (see also \cite{LiLopezMartinMarquez2009,PapaQuirozOliveira2009}), which culminated in the seminal work of Ba\v{c}\'ak \cite{Bac2013} who proved the weak convergence of the proximal point method in general Hadamard spaces. Extensions of such results to equilibrium problems have also been studied by various authors, such as Colao, L\'opez, Marino and Mart{\'i}n-M{\'a}rquez \cite{COLAO201261} set in Hadamard manifolds or Khatibzadeh and Mohebbi \cite{KHATIBZADEHMOHEBBI2021} as well as Iusem and Mohebbi \cite{IusemMohebbi2020} over Hadamard spaces. Yet another approach over Hadamard manifolds, utilizing Busemann functions, was provided recently by Bento, Cruz Neto, Melo, et al.\ \cite{BentoCruzNetoLopesMeloFilho2024,BentoCruzNetoMelo2022}. In particular, these works outline three distinct approaches for extending the regularization of the full bifunction to nonlinear spaces, all of which we will study in this paper. In fact, to our knowledge, in two of these cases (modeled after the works \cite{COLAO201261} and \cite{BentoCruzNetoLopesMeloFilho2024,BentoCruzNetoMelo2022}, respectively), the corresponding regularization is considered in general Hadamard spaces for the first time.

Moving to the setting of quasiconvex functions in such nonlinear contexts, there is similarly a wide array of literature, in particular on proximal point methods. These are however, to our knowledge, largely limited
to the setting of minimization problems over Hadamard manifolds, and we refer in particular to the work of Papa Quiroz and his co-authors \cite{Quiroz24,QuirzoAlexCusiMac20,PapaQuirozOliveira2009,QO2012a,QO2012b}. Work on strongly quasiconvex functions in these contexts is however very limited so far and, to our knowledge, only comprises the recent work \cite{Pischke2025} of the second author where he extends Lara's convergence results from \cite{Lara2022} to the setting of Hadamard spaces.
 
In the present paper, we now follow the approach of \cite{Pischke2025} and, for appropriate metric variants of the methods from \cite{IusemLara2021}, establish their convergence over Hadamard spaces. Already for infinite-dimensional Hilbert spaces, these results seem to be new to the literature. The convergence proofs we provide are, similar to \cite{Pischke2025}, rather different from those given in \cite{IusemLara2021} and in particular quite elementary, for example being void of any (weak) compactness arguments. This elementarity of the convergence proofs given here is not only key for overcoming the obstacles of extending the results from \cite{IusemLara2021} to general Hadamard spaces, but in particular allows us to simultaneously provide quantitative variants of these convergence results. Concretely, in the setting of a Hadamard space $(X,d)$, we provide explicit non-asymptotic guarantees for the distance of the iterates $\{x_k\}_k$ generated by any of the considered methods towards the (in the context of a strongly quasiconvex and pseudomonotone bifunction necessarily unique) solution $x^*$ of the corresponding equilibrium problem. These take the concrete form of 
\[
d(x_k,x^{*})< \frac{C}{\sqrt{k-2}} \text{ for all }k > 2
\]
for a specific (explicitly calculated) constant $C$ which only depends on moduli $\gamma$ and $\eta$ featuring in the strong quasiconvexity and a respective Lipschitz property of the bifunction (similar to \cite{IusemLara2021}) and an upper bound $b^2\geq d^2(x_0,x^*)$ on the initial displacement. Our quantitative estimates are otherwise completely independent of the space, iteration or bifunction generating the equilibrium problem. To our knowledge, these rates are novel even over Euclidean spaces, the original setting of the work of Iusem and Lara \cite{IusemLara2021}. These quantitative results can be found in Theorems \ref{THEOREM2}, \ref{THEOREMa} and \ref{secondQuantThm} below.

In particular, our convergence results are thereby ``strong'', i.e.\ we establish the convergence of the methods relative to the metric and not relative to any notion of weak convergence. This is essentially due to the fact that equilibrium problems for strongly quasiconvex and pseudomonotone bifunctions have a unique solution, and in particular sets such results apart from the ``usual'' proximal point method for (bi)convex functions which, as shown by G\"uler \cite{Gue1991}, generally only converges weakly already in Hilbert spaces.

In the course of our convergence proofs, we in particular further show how some semicontinuity assumptions on the bifunction featuring in \cite{IusemLara2021} can be avoided either conditionally or even unconditionally, generalizing the respective range of bifunctions and clarifying the contributions of these continuity assumptions to the convergence of the methods (see Remarks \ref{firstAlgStrengthening} and \ref{secondAlgStrengthening} later on).

Further, in the context of the second type of method which regularizes the bifunction only in its second argument, we are able to weaken the previous parameter restriction $\{\beta_k\}_k\subseteq (\lowerbound,\upperbound)$ featuring in \cite{IusemLara2021} to $\{\beta_k\}_k\subseteq [\lowerbound,\upperbound]$. While perhaps only a marginal improvement at first sight, this extension of the parameter range is actually crucial for establishing our quantitative results for that method, as discussed in more detail later (see the discussion in Section \ref{subsubsection_4_closed} later on). In particular, this extension requires various additional technicalities compared to the proof of the original result given in \cite{IusemLara2021}.

Also, in the context of the first type of method which regularizes the full bifunction, we highlight some issues in the well-definedness argument of that method provided in \cite{IusemLara2021} which leave it open whether this method is generally well-defined for strongly quasiconvex bifunctions already over Euclidean spaces (see Remark \ref{counter} later on).

As a last contribution, we provide a theorem on the existence of solutions for the equilibrium problem of suitably semicontinuous, pseudomonotone and strongly quasiconvex bifunctions over Hadamard spaces (see Theorem \ref{nonemptyness_thrm} later on), lifting the corresponding result obtained by Iusem and Lara (cf.\ \cite[Proposition 3.1]{IusemLara2021}) from the linear setting to this class of nonlinear spaces. Our argument here in particular relies on combining ideas from \cite{IusemLara2021,Lara2022} with the work of Khatibzadeh and Mohebbi \cite{KHATIBZADEHMOHEBBI2021}, where the existence of equilibrium points over Hadamard spaces is studied for certain (in particular convex) bifunctions. As a consequence, we in particular establish that every lower-semicontinuous strongly quasiconvex function on a Hadamard space has a minimizer (see Corollary \ref{minExists} later on), which answers a question left open in the previous work \cite{Pischke2025} of the second author and lifts a corresponding result of Lara (cf.\ \cite[Corollary 3]{Lara2022}) to this nonlinear setting.

\subsection{Proof mining}

All of the present quantitative results have been derived using methods from proof mining, a program in mathematical logic that applies results from proof theory to areas of core mathematics to classify and extract the computational content of prima facie ``non-computational'' proofs therein. We refer to the seminal monograph \cite{Kohlenbach2008} for a comprehensive overview of this area and to the survey \cite{Kohlenbach18Survey} for an overview of various applications to nonlinear analysis in particular. A detailed discussion on the concrete use of logical methods in the context of the present paper can be found in the master thesis of the first author \cite{Despres2026}. The present paper however is (apart from a few short remarks labeled ``for logicians'') presented without any explicit reference to logic and does not require any such background.

\subsection{Future work}

The results of Lara \cite{Lara2022} as well as Iusem and Lara \cite{IusemLara2021} have since been extended in various directions by Lara and his co-authors (see e.g.\ \cite{AnsariBabuRaju2025,GLM2023,GradLaraMarca24,HadjisavvasLaraMarcavillacaVuong2026,IusemLaraMarcavillacaYen2024,LaraMarca24,LaraMarcavillacaVuong2025}, among others), and so there is a natural range of potential future works stemming from the present paper.

In particular, we want to highlight the extensions of the proximal point algorithm which incorporate inertia terms and over-relaxations, as developed by Grad, Lara and Marcavillaca in \cite{GLM2023} for strongly quasiconvex minimization problems and in \cite{GradLaraMarca24} (see again also \cite{HieuDuongThai2021}) for associated equilibrium problems. A forthcoming paper by the authors (based, similarly to the present paper, on the master thesis of the first author \cite{Despres2026}) will provide analogous (quantitative) results also for these methods, which in particular crucially rely on the approach and methods developed in the present paper.

Further, more speculative future work which we want to highlight includes extensions of the methods studied in the present paper to incorporate Bregman distances over Hadamard manifolds, which could be approached by combining the method of the present work with that of preceding work on Bregman distances and associated proximal point methods in the context of convex bifunctions \cite{BURACHIK2012} and strongly quasiconvex (bi)functions \cite{AnsariBabuRaju2025,LaraMarca24} and with work on Bregman distances over such manifolds \cite{PapaQuirozOliveira2009} (or potentially even work of the second author \cite{Pischke2026} on Bregman distances in Hadamard spaces).

\section{Preliminaries}\label{subsec_Hadmard_spaces}

\subsection{Hadamard spaces}

Let $(X,d)$ be a metric space. A geodesic in $X$ is defined to be an isometry $\gamma:[0,l]\to X$, and we say that $\gamma$ joins $\gamma(0)$ and $\gamma(l)$. A geodesic space $X$ is called (uniquely) geodesic if every two points are joined by a (unique) geodesic. A geodesic ray is an isometry $r:[0,\infty)\to X$, and we say that $r$ issues from $r(0)$. $X$ is said to have the geodesic extension property if for all $x\neq y\in X$, there is a ray $r:[0,\infty)\to X$ issuing from $x$ such that $r(t)=y$ for some $t>0$. A geodesic metric space $(X,d)$ is called a $\CAT$-space if it satisfies 
\[
d^2(\gamma(\lambda l),x)\leq (1-\lambda)d^2(\gamma(0),x)+\lambda d^2(\gamma(l),x)-\lambda(1-\lambda)d^2(\gamma(0),\gamma(l))\tag{CN$^+$}\label{CN}
\]
for all $x\in X$, all $\lambda\in [0,1]$ and all geodesics $\gamma:[0,l]\to X$, an extension of the so-called Bruhat-Tits $\mathrm{CN}$-inequality \cite{BruhatTits1972} to geodesics. Any $\CAT$-space is uniquely geodesic and a complete $\CAT$-space is called a Hadamard space. Over a Hadamard space $(X,d)$, we write $(1-\lambda)x \oplus \lambda y$ for the point $\gamma(\lambda d(x,y))$ on the unique geodesic $\gamma:[0,d(x,y)]\to X$ joining $x$ and $y$. We call a subset $K \subseteq X$ convex, if with $x,y \in K$ and $\lambda \in [0,1]$ also $(1-\lambda)x \oplus \lambda y \in K$. We denote the convex hull of a set $A\subseteq X$ by $\mathrm{conv}(A)$.

In this paper, we also rely on an equivalent characterization of Hadamard spaces through a metric version of the Cauchy-Schwarz inequality for the so-called quasi-inner product due to Berg and Nikolaev \cite{BergNikolaev2008}, that is the function
\[
\langle\vv{xy},\vv{uv}\rangle:= \frac{1}{2}\big(d^2(x,v)+d^2(y,u)-d^2(x,u)-d^2(y,v)\big).
\]
As shown in \cite[Proposition 14]{BergNikolaev2008}, this function is the unique map $\langle \cdot, \cdot \rangle : X^2 \times X^2 \rightarrow \R $ satisfying the following conditions for any $x,v,u,v\in X$: $\langle \vv{xy},\vv{xy}\rangle = d^2(x,y)$; $\langle \vv{xy},\vv{uv}\rangle = \langle \vv{uv},\vv{xy}\rangle $;  $\langle \vv{xy},\vv{uv}\rangle = -\langle \vv{yx},\vv{uv}\rangle $; $\langle \vv{xy},\vv{uv}\rangle + \langle \vv{xy},\vv{vw}\rangle = \langle \vv{xy},\vv{uw}\rangle $. Now, as established in \cite[Corollary 3]{BergNikolaev2008}, a geodesic metric space $(X,d)$ is a $\CAT$-space if, and only if,
\begin{equation}
\langle\vv{xy},\vv{uv}\rangle \leq d(x,y)d(u,v)\tag{CS}\label{cauchy}
\end{equation}
for all $x,y,u,v \in X$.

\subsection{(Quasi)convexity over Hadamard spaces} Let $(X,d)$ be a Hadamard space. A function $h: X \rightarrow \R$ is called \emph{convex} if
\begin{equation*}
h((1-\lambda)x \oplus \lambda y) \leq (1-\lambda) h(x) + \lambda h(y)
\end{equation*}
for all $x,y \in X$ and $\lambda \in [0,1]$. A particular example of a convex function that we will need later is the Busemann function $b_r:X\to\mathbb{R}$ associated with a geodesic ray $r$, defined by
\[
b_r(x):=\lim_{t\to\infty} (d(x,r(t))-t).
\]
In particular, $b_r$ is convex and nonexpansive, i.e.\ 1-Lipschitz (cf.\ \cite[Example 2.2.10]{Bacak2014}).

As motivated in the introduction, we will here be focused on the weaker notion of quasiconvexity, and its respective strengthening of strong quasiconvexity. In that vein, a function $h: X \rightarrow \R$ is called
\begin{itemize}
\item[(i)]\emph{quasiconvex} if 
\begin{equation*}
h((1-\lambda)x \oplus \lambda y) \leq \mathrm{max}\{h(x),h(y)\}
\end{equation*}
for all $x,y \in X$ and $\lambda \in [0,1]$,
\item[(ii)]\emph{strongly quasiconvex} with modulus $\gamma > 0$ if 
\begin{equation*}
h((1-\lambda)x \oplus \lambda y) \leq \mathrm{max}\{h(x),h(y)\}- \lambda(1-\lambda)\frac{\gamma}{2}d^2(x,y)
\end{equation*}
for all $x,y \in X$ and $\lambda \in [0,1]$.
\end{itemize}

Next to notions of quasiconvexity, we will also need the following notions of semicontinuity: A function $h:X \rightarrow \mathbb{R}$ is called \emph{lower semicontinuous} (lsc) at $x \in X$ if 
\begin{equation*}
h(x) \leq \liminf_{k \rightarrow \infty} h(x_k)
\end{equation*}
for any sequence $\{x_k\}_k \subseteq X$ with $x_k \rightarrow x$. If instead it holds that 
\begin{equation*}
h(x) \geq \limsup_{k \rightarrow \infty} h(x_k)
\end{equation*}
for any sequence $\{x_k\}_k \subseteq X$ with $x_k \rightarrow x$, $h$ is called \emph{upper semicontinuous} (usc) at $x \in X$.

\subsection{Tangent cones}\label{tangent}

We will later utilize the so-called tangent cones of a $\CAT$-space as developed in the work of Nikolaev \cite{Nikolaev1995} (see also \cite{BridsonHaefliger1999} for further exposition). Fix a $\CAT$-space $(X,d)$. For two nonconstant geodesics $\gamma,\eta$ issuing from a point $x\in X$, their Alexandrov angle $\angle_x(\gamma,\eta)$ is defined by
\[
\angle_x(\gamma,\eta):=\lim_{s,t\to 0^+}\overline{\angle}_x(\gamma(s),\eta(t)),
\]
where $\overline{\angle}_x(y,z)$ is the comparison angle defined through the comparison triangle $\overline{\Delta}(\overline{x},\overline{y},\overline{z})$ of the geodesic triangle $\Delta(x,y,z)\subseteq X$ (see \cite{BridsonHaefliger1999}). Using the fact that $\angle_x$ defines a pseudometric on the set of all nonconstant geodesics issuing from $x$, we define $\Sigma'_xX$ to be the set of all equivalence classes of such geodesics under the equivalence relation defined by $\angle_x(\gamma,\eta)= 0$. The completion $(\Sigma_xX,\angle_x)$ of the space $(\Sigma'_xX,\angle_x)$ is called the metric space of directions from $x$ and the tangent cone $T_xX$ of $X$ at $x$ is then the Euclidean cone over $\Sigma_xX$, that is $T_xX:=(\Sigma_xX\times [0,\infty) )/\sim$ where $(\gamma,t)\sim(\eta,s)$ if, and only if, $t=s=0$ or $t=s>0$ and $\gamma=\eta$. For brevity, we write $t\gamma$ for the equivalence class $[(\gamma,t)]_\sim$ and given $u=t\gamma$ and $\lambda\geq 0$, we write $\lambda u:=(\lambda t)\gamma$. On $T_xX$, we define a metric
\[
d_x(t\gamma,s\eta):=\sqrt{t^2+s^2-2ts\cos\angle_x(\gamma,\eta)}.
\]
By the results of Nikolaev \cite{Nikolaev1995}, it follows that $(T_xX,d_x)$ is a Hadamard space. We further write $0_x:=0\gamma$ as well as $\norm{t\gamma}_x:=d_x(0_x,t\gamma)=t$ and define
\[
g_x(t\gamma,s\eta):=\frac{1}{2}\left(\norm{t\gamma}_x^2+\norm{s\eta}_x^2-d_x^2(t\gamma,s\eta)\right)=ts\cos\angle_x(\gamma,\eta).
\]
It can be easily seen that $g_x(t\gamma,s\eta)=\svv{0_x}{t\gamma}{0_x}{s\eta}_x$, where $\langle\vv{\cdot},\vv{\cdot}\rangle_x$ is the quasi-inner product on $T_xX$, and so $g_x(t\gamma,s\eta)\leq \norm{t\gamma}_x\norm{s\eta}_x$ by \eqref{cauchy} on $T_xX$. Further, one has $g_x(t\gamma,t\gamma)=\norm{t\gamma}_x^2$, $g_x(t\gamma,s\eta)=g_x(s\eta,t\gamma)$ and $g_x(t\gamma,s\eta)=tg_x(\gamma,s\eta)$.

Similar as e.g.\ Ohta \cite{Ohta2012} we define the inverse exponential map in a general Hadamard space to be the function $\log_x:X\to T_xX$ given by $\log_x a:=d(x,a)\gamma_{x,a}$ for $a\neq x$, where $\gamma_{x,a}$ is the (unique) geodesic connecting $x$ to $a$, as well as $\log_x x:=0_x$. The central property of the inverse exponential map $\log_x$ in relation to the pseudo-inner product $g_x$ that we need is the following:

\begin{lemma}[{essentially \cite[Proposition 2.16]{ChaipunyaKohsakaKumam2021}}]\label{tangentCat0}
For any $x,a,b\in X$:
\[
g_x(t\log_x a,s\log_x b)\geq \frac{ts}{2}(d^2(x,a)+d^2(x,b)-d^2(a,b)).
\]
\end{lemma}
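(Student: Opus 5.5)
We assume $t,s\geq 0$, which is the setting in which the lemma is used. The plan is to reduce the inequality to the classical fact that in a $\CAT$-space the Alexandrov angle is at most the comparison angle. We prove that fact directly via \eqref{CN}, phrased through the quasi-inner product.

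First we dispose of the degenerate cases. If $a=x$ or $b=x$, then one of the arguments of $g_x$ is $0_x$, so the left-hand side is $0$. The right-hand side is also $0$: for instance, if $a=x$ it equals $\frac{ts}{2}(0+d^2(x,b)-d^2(x,b))=0$. So we may assume $a,b\neq x$. Using the formula for $g_x$ recalled above, we write the left-hand side as
\[
g_x(t\log_x a,s\log_x b)=ts\,d(x,a)d(x,b)\cos\angle_x(\gamma_{x,a},\gamma_{x,b}).
\]
The right-hand side is $ts\svv{x}{a}{x}{b}=ts\,d(x,a)d(x,b)\cos\overline{\angle}_x(a,b)$, by the law of cosines in the comparison triangle. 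Since $ts\,d(x,a)d(x,b)\geq 0$, it suffices to show $\cos\angle_x(\gamma_{x,a},\gamma_{x,b})\geq\cos\overline{\angle}_x(a,b)$.

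The key step is a scaling inequality. Let $y_\lambda:=(1-\lambda)x\oplus\lambda a$ with $\lambda\in[0,1]$. We claim
\[
\svv{x}{y_\lambda}{x}{b}\geq \lambda\svv{x}{a}{x}{b}.
\]
To see this, note $d(x,y_\lambda)=\lambda d(x,a)$ and apply \eqref{CN} with base point $b$:
\[
d^2(y_\lambda,b)\leq(1-\lambda)d^2(x,b)+\lambda d^2(a,b)-\lambda(1-\lambda)d^2(x,a).
\]
Substituting this into
\[
\svv{x}{y_\lambda}{x}{b}=\tfrac12\bigl(\lambda^2d^2(x,a)+d^2(x,b)-d^2(y_\lambda,b)\bigr)
\]
gives a lower bound of
\[
\tfrac12\bigl(\lambda d^2(x,a)+\lambda d^2(x,b)-\lambda d^2(a,b)\bigr)=\lambda\svv{x}{a}{x}{b}.
\]
By symmetry of the quasi-inner product, we can apply the same step in the second slot with $z_\mu:=(1-\mu)x\oplus\mu b$. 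This yields $\svv{x}{y_\lambda}{x}{z_\mu}\geq\lambda\mu\svv{x}{a}{x}{b}$. Dividing by $d(x,y_\lambda)d(x,z_\mu)=\lambda\mu\, d(x,a)d(x,b)$ gives
\[
\cos\overline{\angle}_x(y_\lambda,z_\mu)\geq\cos\overline{\angle}_x(a,b)\quad\text{for all }\lambda,\mu\in(0,1].
\]

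Finally, $y_\lambda=\gamma_{x,a}(\lambda d(x,a))$ and $z_\mu=\gamma_{x,b}(\mu d(x,b))$. Letting $\lambda,\mu\to0^+$ in the definition of the Alexandrov angle, and using continuity of $\cos$, we get $\cos\angle_x(\gamma_{x,a},\gamma_{x,b})\geq\cos\overline{\angle}_x(a,b)$, which concludes the proof. The only delicate point is the algebra in the \eqref{CN} step, where the terms $\lambda^2 d^2(x,a)$ and $\lambda(1-\lambda)d^2(x,a)$ must combine to $\lambda d^2(x,a)$; everything else is bookkeeping.
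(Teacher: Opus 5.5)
Your proof is correct, including the degenerate cases and the restriction to $t,s\geq 0$, which is the only case in which $t\log_x a$ is defined in the cone.

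The paper gives no argument of its own here and simply defers to \cite[Proposition 2.16]{ChaipunyaKohsakaKumam2021}. The standard route to that statement is exactly your top-level reduction. It combines the $\CAT$ fact that the Alexandrov angle at $x$ never exceeds the comparison angle (see e.g.\ \cite{BridsonHaefliger1999}) with the Euclidean law of cosines, which identifies $\frac{1}{2}(d^2(x,a)+d^2(x,b)-d^2(a,b))$ with $d(x,a)d(x,b)\cos\overline{\angle}_x(a,b)$.

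What you do differently is prove the angle comparison rather than import it. The scaling inequality $\svv{x}{y_\lambda}{x}{b}\geq\lambda\svv{x}{a}{x}{b}$ follows from \eqref{CN} in one line, and applying it in both slots gives the comparison directly. This keeps the lemma self-contained, using only \eqref{CN}, the quasi-inner product and the definition of $\angle_x$ from Section \ref{tangent}. As a by-product, applying the same inequality to $\gamma_{x,a}(s)$ and $\gamma_{x,a}(s')$ with $0<s<s'$ shows that comparison angles are monotone along geodesics issuing from $x$. This justifies the existence of the limit the paper uses to define $\angle_x$.

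The citation, on the other hand, is shorter and ties the statement to the existing tangent-cone literature. It leaves to the reader the rescaling by $t,s$ and the cases $a=x$ or $b=x$, which your write-up makes explicit.
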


\subsection{Weak convergence in Hadamard spaces}

We now introduce the notion and basic properties of weak convergence in Hadamard spaces. First introduced by Jost \cite{Jost94}, in \cite{ESPINOLA2009}, Esp\'inola and Fern\'andez-Le\'on show that the same notion was obtained by Kirk and Panyanak in \cite{KIRK2008}, who brought Lim's $\Delta$-convergence \cite{Lim1976} into Hadamard spaces. We will thus denote weak convergence by $\Delta$-convergence, notationally agreeing with \cite{KHATIBZADEHMOHEBBI2021}, whose results we will crucially rely on in Section \ref{the_existence_issue} later on for establishing the existence of equilibrium points. For a thorough discussion of weak convergence in Hadamard spaces, see e.g.\ \cite{Bacak2014}.

Let $(X,d)$ be a Hadamard space and $\{x_k\}_k \subseteq X$ be a bounded sequence. For $x \in X$, set
\[
r(x, \{x_k\}_k) := \limsup_{n \rightarrow \infty} d(x,x_k).
\]
The \emph{asymptotic radius} $r(\{x_k\}_k)$ is given by 
\[
r(\{x_k\}_k) := \mathrm{inf} \{ r(x, \{x_k\}_k)\mid x \in X\},
\]
and the \emph{asymptotic center} $A(\{x_k\}_k)$ of $\{x_k\}_k$ is the set
\[
A(\{x_k\}_k) := \{x \in X \mid r(\{x_k\}_k) = r(x, \{x_k\}_k)\}.
\]
It is known that $A(\{x_k\}_k)$ is a singleton in Hadamard spaces (see e.g.\ \cite{KIRK2008}).

A bounded sequence $\{x_k\}_k \subseteq X$ is then said to $\Delta$-converge to $x \in X$ if $x$ is the unique asymptotic center of every subsequence of $\{x_k\}_k$. We write $x_k \rightarrow^\Delta x$ and call $x$ the $\Delta$-limit of $\{x_k\}_k$. We further call a point $x \in X$ a $\Delta$-cluster point of a sequence $\{x_k\}_k$ if there is a subsequence $\{x_{k_l}\}_l$ of $\{x_k\}_k$ such that $ x_{k_l} \rightarrow^\Delta x$.

We will later rely on various results regarding this notion of weak convergence, the most essential of which we list in the following:

\begin{lemma}[{cf.\ \cite[Proposition 3.5]{KIRK2008}}]\label{Lemma7P}
Any bounded sequence in $X$ has a $\Delta$-cluster point.
\end{lemma}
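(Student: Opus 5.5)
The plan is to use a diagonal-subsequence argument with asymptotic radii and to exploit the uniform convexity of Hadamard spaces through \eqref{CN}. Let $\{x_k\}_k$ be bounded. For every subsequence $\{u_k\}$ of $\{x_k\}_k$, let $r(\{u_k\})$ be its asymptotic radius. We already take for granted that asymptotic centers of bounded sequences are singletons. I also expect to need that \eqref{CN} gives existence of the center. The reason is that any minimizing sequence for $x\mapsto r(x,\{u_k\})^2$ is Cauchy: apply \eqref{CN} at the midpoint of two near-minimizers $y,z$ with $\lambda=1/2$ and pass to the limsup. This gives
\[
r^2\leq r(y)^2/2+r(z)^2/2-d^2(y,z)/4,
\]
and completeness then yields the center.

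The first goal is a subsequence $\{v_k\}$ that is \emph{regular}, meaning every further subsequence has the same asymptotic radius. Following Kirk--Panyanak (and Lim), set $\rho_0:=\inf\{r(\{u_k\})\mid \{u_k\}$ a subsequence of $\{x_k\}\}$. Choose a subsequence $\{x^1_k\}$ with radius below $\rho_0+1$. Inductively, given $\{x^n_k\}$, let $\rho_n$ be the infimum of radii over its subsequences, and choose a subsequence $\{x^{n+1}_k\}$ of it with radius below $\rho_n+1/(n+1)$. Radii can only decrease along subsequences, since $r(x,\cdot)$ is a limsup. Hence $\{\rho_n\}$ is nondecreasing and bounded above by $\sup_k d(x_0,x_k)$. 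The diagonal sequence $v_k:=x^k_k$ is, up to finitely many terms, a subsequence of each $\{x^n_k\}$. Therefore every subsequence of $\{v_k\}$ has radius squeezed between $\rho_n$ and $\rho_{n-1}+1/n$ for all $n$, and both bounds converge to $\lim\rho_n$. So $\{v_k\}$ is regular.

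Let $x$ be the asymptotic center of $\{v_k\}$ and $\rho$ its radius. To show $v_k\rightarrow^\Delta x$, take any subsequence $\{w_k\}$ with center $y$. By regularity $r(\{w_k\})=\rho$, and
\[
r(x,\{w_k\})\leq r(x,\{v_k\})=\rho.
\]
So $x$ also attains the asymptotic radius of $\{w_k\}$, and uniqueness of centers forces $y=x$. Thus $x$ is a $\Delta$-cluster point of $\{x_k\}_k$.

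The main obstacle is the regularity construction, specifically handling the suprema and infima carefully and making sure the diagonal argument is valid. That argument works only because the asymptotic radius depends on tails alone and is monotone under passing to subsequences. The remaining steps follow routinely from uniqueness of centers, which is where nonpositive curvature enters via \eqref{CN}.
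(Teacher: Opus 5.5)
Your proof is correct and follows the standard argument behind the result the paper cites, \cite[Proposition 3.5]{KIRK2008}: extract a regular subsequence by Lim's diagonal construction, then use uniqueness of asymptotic centers, which the paper also takes as known, to show that the center of that subsequence is its $\Delta$-limit. The paper gives no proof of its own beyond this citation, so there is nothing further to compare.
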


\begin{lemma}[{cf.\ \cite[Lemma 3.2.1]{Bacak2014}}]\label{Lemma8P}
Let $K\subseteq X$ be a closed and convex set and $\{x_k\}_k \subseteq K$. If the sequence $\{x_k\}_k$ $\Delta$-converges to a point $x \in X$, then $x \in K$.
\end{lemma}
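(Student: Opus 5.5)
Lemma \ref{Lemma8P} is a cited result, but here is how I would prove it directly, using only the definition of the asymptotic center and the metric projection onto $K$. Let $x_k\rightarrow^\Delta x$ with $\{x_k\}_k\subseteq K$, and let $p:=P_K(x)$ be the metric projection of $x$ onto $K$. This projection is well-defined, since $K$ is closed, convex and nonempty in a complete $\CAT$-space: a minimizing sequence for $d(x,\cdot)$ on $K$ is Cauchy by \eqref{CN} applied at midpoints. The goal is to show $r(p,\{x_k\}_k)\le r(x,\{x_k\}_k)$ with $p\neq x$ impossible. Uniqueness of the asymptotic center then forces $x=p\in K$.

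The key step is the obtuse-angle property of the projection: for every $y\in K$,
\[
d^2(x,y)\ge d^2(x,p)+d^2(p,y).
\]
To get it, take $z_\lambda:=(1-\lambda)p\oplus\lambda y\in K$ and apply \eqref{CN} with base point $x$:
\[
d^2(x,p)\le d^2(x,z_\lambda)\le (1-\lambda)d^2(x,p)+\lambda d^2(x,y)-\lambda(1-\lambda)d^2(p,y).
\]
Subtract $d^2(x,p)$, divide by $\lambda>0$ and let $\lambda\to0^+$. This gives $0\le d^2(x,y)-d^2(x,p)-d^2(p,y)$, which is the claimed inequality.

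Now apply it with $y=x_k$. It gives $d^2(x,x_k)\ge d^2(x,p)+d^2(p,x_k)$, so taking limsups yields $r(x,\{x_k\}_k)^2\ge d^2(x,p)+r(p,\{x_k\}_k)^2$. Since $x$ is the asymptotic center of the whole sequence, $r(x,\{x_k\}_k)\le r(p,\{x_k\}_k)$, and therefore $d(x,p)=0$. Hence $x=p\in K$. Note that only the asymptotic center of the full sequence is used; the full strength of $\Delta$-convergence is not needed.

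I expect no real obstacle. The only point needing care is the existence of the projection, which is a standard parallelogram-type argument via \eqref{CN}: for $y_n\in K$ with $d(x,y_n)\to\operatorname{dist}(x,K)$, apply \eqref{CN} at the midpoint of $y_n,y_m$, which lies in $K$, to get $\tfrac14 d^2(y_n,y_m)\to 0$.
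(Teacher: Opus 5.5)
Your proof is correct and is the standard argument behind the cited result \cite[Lemma 3.2.1]{Bacak2014}, which the paper quotes without proof: take the metric projection $p$ of $x$ onto $K$, use the obtuse-angle inequality $d^2(x,y)\geq d^2(x,p)+d^2(p,y)$ for $y\in K$ (which you derive correctly from \eqref{CN}), and compare the asymptotic radii at $x$ and $p$. Either ending works, whether uniqueness of the asymptotic center once $r(p,\{x_k\}_k)\leq r(x,\{x_k\}_k)$, or $d(x,p)=0$ read off directly from the limsup inequality, and you are right that only the asymptotic center of the full sequence is needed.
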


Together, the above two lemmas in particular yield: 

\begin{lemma}[{cf.\ \cite[Proposition 3.6]{KIRK2008}}]\label{delta}
Every bounded closed convex set in $X$ is $\Delta$-compact.
\end{lemma}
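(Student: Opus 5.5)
The statement is Lemma \ref{delta}: every bounded, closed and convex set $K\subseteq X$ is $\Delta$-compact. Concretely, every sequence $\{x_k\}_k\subseteq K$ has a subsequence that $\Delta$-converges to a point of $K$. I plan to deduce this directly from Lemmas \ref{Lemma7P} and \ref{Lemma8P}, as the remark preceding the statement suggests.

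First, take an arbitrary sequence $\{x_k\}_k\subseteq K$. Since $K$ is bounded, so is $\{x_k\}_k$. Lemma \ref{Lemma7P} therefore gives a $\Delta$-cluster point $x\in X$. By the definition of $\Delta$-cluster point, this means there is a subsequence $\{x_{k_l}\}_l$ with $x_{k_l}\rightarrow^\Delta x$.

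Second, the subsequence $\{x_{k_l}\}_l$ still lies in $K$. The set $K$ is closed and convex, and the subsequence $\Delta$-converges to $x$, so Lemma \ref{Lemma8P} applies and yields $x\in K$. Hence every sequence in $K$ has a subsequence $\Delta$-converging to a point of $K$, which is exactly sequential $\Delta$-compactness of $K$.

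There is no genuine obstacle here. The only point needing care is what ``$\Delta$-compact'' means. Following \cite{KIRK2008}, I read it in the sequential sense above, and then the argument is complete. If a topological meaning were intended instead, one would need the fact that $\Delta$-convergence is not induced by a topology in general. So I would state explicitly that the sequential notion is the one used, since that is also the form in which the lemma is applied later in Section \ref{the_existence_issue}.
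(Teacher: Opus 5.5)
Your argument is correct and coincides with the paper's, which obtains the lemma by combining Lemma \ref{Lemma7P} (extract a $\Delta$-convergent subsequence from a bounded sequence in $K$) with Lemma \ref{Lemma8P} (its $\Delta$-limit lies back in $K$), exactly as you do. Your sequential reading of $\Delta$-compactness is the one used in \cite{KIRK2008}, and it is the form needed when Lemma \ref{Lemma1.8} is applied in the proof of Theorem \ref{nonemptyness_thrm}.
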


We say that a function $h: X \rightarrow \R$ is \emph{$\mathit{\Delta}$-lower semicontinuous} ($\Delta$-lsc) at a point $x \in X$ if
\[
\liminf_{n \rightarrow \infty} h(x_k) \geq h(x)
\]
for each sequence $\{x_k\}_k$ with $x_k \rightarrow^\Delta x$. The property of $h: X \rightarrow \R$ being $\Delta$-upper semicontinuous ($\Delta$-usc) is defined analogously.

\begin{lemma}[{cf.\ \cite[Lemma 3.7]{Pischke2025}, extending \cite[Lemma 3.2.3]{Bacak2014}}]\label{Lemma9P}
Let $h: X \rightarrow \R$ be a quasiconvex lsc function. Then $h$ is $\Delta$-lsc.
\end{lemma}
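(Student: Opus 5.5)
The plan is to reduce the claim to the closedness of sublevel sets together with the characterization of $\Delta$-limits via asymptotic centers. Let $\{x_k\}_k$ be a sequence with $x_k\rightarrow^\Delta x$, and put $\alpha:=\liminf_{k} h(x_k)$. If $\alpha=+\infty$ there is nothing to show, so assume $\alpha<\infty$. Suppose for contradiction that $h(x)>\alpha$, and fix $c$ with $\alpha<c<h(x)$. Pass to a subsequence $\{x_{k_l}\}_l$ with $h(x_{k_l})<c$ for all $l$. Since any subsequence of a $\Delta$-convergent sequence $\Delta$-converges to the same limit, we still have $x_{k_l}\rightarrow^\Delta x$.

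Next consider the sublevel set $L_c:=\{y\in X\mid h(y)\leq c\}$. It is convex because $h$ is quasiconvex: if $y,z\in L_c$, then $h((1-\lambda)y\oplus\lambda z)\leq\max\{h(y),h(z)\}\leq c$. It is closed because $h$ is lsc: if $y_n\to y$ with $y_n\in L_c$, then $h(y)\leq\liminf_n h(y_n)\leq c$. The subsequence $\{x_{k_l}\}_l$ lies in $L_c$ and $\Delta$-converges to $x$. Lemma \ref{Lemma8P}, applied to the closed convex set $K=L_c$, then gives $x\in L_c$, i.e.\ $h(x)\leq c<h(x)$, a contradiction. Hence $h(x)\leq\liminf_k h(x_k)$, so $h$ is $\Delta$-lsc at every point.

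The only step needing care is the appeal to Lemma \ref{Lemma8P}. Its proof (via the metric projection onto $K$ and uniqueness of asymptotic centers) uses only that $K$ is closed and convex, which we have verified for $L_c$. So I expect no real obstacle beyond the bookkeeping of the $\liminf$ through the subsequence, namely arguing at a level $c$ strictly between $\alpha$ and $h(x)$ rather than at $\alpha$ itself.
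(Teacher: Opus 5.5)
Your proof is correct and is essentially the standard argument behind the cited result. The sublevel set at a level $c$ strictly between $\liminf_k h(x_k)$ and $h(x)$ is closed by lower semicontinuity and convex by quasiconvexity, hence contains the $\Delta$-limit by Lemma \ref{Lemma8P}; this is how \cite[Lemma 3.2.3]{Bacak2014} handles convex functions and how \cite[Lemma 3.7]{Pischke2025} extends it, and the paper itself gives no proof beyond that citation.
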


\section{Equilibrium problems over Hadamard spaces and existence of solutions}\label{the_existence_issue}

We now fix the class of bifunctions and associated equilibrium problems studied in this paper: Given a Hadamard space $(X,d)$ and a bifunction $f: X \times X \rightarrow \R$, the task is to 
\begin{equation}
	\text{find } x^{*} \in X \text{ such that } f(x^{*},y) \geq 0 \text{ for all } y \in X.\tag{EP}\label{equil}
\end{equation}
We denote the associated solution set by $S(f)$. The focus will be on a special class of pseudomonotone, strongly quasiconvex and suitably Lipschitz bifunctions. As we throughout also utilize other auxiliary properties, we  enumerate the various assumptions featuring in this paper for a better overview (compare \cite{IusemLara2021}): 

\begin{enumerate}
\item [$(A1)$] For every $x \in X$, the function $f(x,\cdot)$ is lsc, and for every $y \in X$, the function $f(\cdot,y)$ is usc.\label{A1}
\item [$(A2)$] $f$ is pseudomonotone, i.e.
\[
f(x,y) \geq 0 \text{ implies } f(y,x) \leq 0
\]
for all $x,y \in X$.
\item [$(A3)$] $f$ is lsc (jointly in both arguments).
\item [$(A4)$] For every $x \in X$, the function $f(x,\cdot)$ is strongly quasiconvex with modulus $\gamma > 0$.
\item [$(A5)$] $f$ satisfies the following Lipschitz condition: there exists $\eta > 0$ such that
\[
f(x,z) - f(x,y) - f(y,z) \leq \eta \left(d^2(x,y) + d^2(y,z) \right)
\]
for all $x,y,z \in X$.
\item [$(A6)$] The Lipschitz constant $\eta$ and the modulus of strong quasiconvexity $\gamma$ are such that $12 \eta < \gamma$.
\end{enumerate}

While $(A1)$ and $(A2)$ are standard assumptions in the literature (see e.g.\ \cite{KHATIBZADEHMOHEBBI2021} for such assumptions in the context of equilibrium problems on Hadamard spaces), the assumptions $(A4)$, $(A5)$ and $(A6)$ are metric versions of the corresponding properties used in \cite{IusemLara2021} over $\mathbb{R}^d$, and replace the usual assumption of convexity of $f$. As in \cite{IusemLara2021}, we also note here that the standard assumption
\begin{enumerate}
\item [$(A0)$] $f(x,x) = 0$ for all $ x \in X$.
\end{enumerate}
is derivable from $(A2)$ and $(A5)$, although we will later also make use of it individually. $(A3)$ seems to be a more uncommon assumption, and we will in particular show that it can generally be avoided in the context of the convergence and existence theorems of the present paper, improving the results of \cite{IusemLara2021} (see also Remark \ref{firstAlgStrengthening} later on).

Before moving on to the question when and under what assumptions the above equilibrium problem \eqref{equil} has a solution in the nonlinear context of Hadamard spaces, we give a key example of a bifunction that illustrates the above assumptions $(A1)$ -- $(A6)$. Further examples of bifunctions satisfying the above conditions simultaneously can be found in Section 4 of \cite{IusemLara2021}.

\begin{example}[{essentially \cite[Corollary 3.1]{IusemLara2021}}]\label{ex_min_is_equi}	
Let $h: X \rightarrow \R$ be lsc and strongly quasiconvex with modulus $\gamma > 0$. Consider the bifunction $f_h: X \times X \rightarrow \R$ given by 
\[
f_h(x,y) := h(y)-h(x) \text{ for all }x,y \in X.
\]
It is then easy to see that $S(f_h)=\argmin_{x\in X} h(x)$, that is the solutions of the equilibrium problem for $f_h$ correspond exactly to the solutions of the minimization problem for $h$. Further, $f_h$ fulfills $(A1)$, $(A2)$ and $(A4)$ -- $(A6)$. Here, $(A1)$, $(A2)$ and $(A4)$ are rather immediate (see also the proof of Corollary \ref{minExists} later on) and $(A5)$ as well as $(A6)$ follow from the fact that  $f_h(x,z) -f_h(x,y) -f_h(y,z) = 0$.
\end{example}

Now, at first note that the strong quasiconvexity and pseudomonotonicity of $f$ imply that $S(f)$ is either empty or a singleton, which generalizes parts of \cite[Proposition 3.1]{IusemLara2021} to our nonlinear setting (and in particular extends \cite[Lemma 3.1]{Pischke2025} from minimization problems to equilibrium problems).

\begin{lemma}\label{uniqueness}
Suppose that $f$ satisfies $(A0)$, $(A2)$ and $(A4)$. Then $S(f)$ is at most a singleton.
\end{lemma}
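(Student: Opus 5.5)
We argue by contradiction. Suppose $x^*,y^*\in S(f)$ with $x^*\neq y^*$. Applying the solution property of each point to the other gives $f(x^*,y^*)\geq 0$ and $f(y^*,x^*)\geq 0$. By pseudomonotonicity $(A2)$, $f(x^*,y^*)\geq 0$ forces $f(y^*,x^*)\leq 0$, hence $f(y^*,x^*)=0$; symmetrically, $f(x^*,y^*)=0$.

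Next we exploit strong quasiconvexity $(A4)$ of $f(x^*,\cdot)$ along the geodesic from $x^*$ to $y^*$. Take the midpoint $z:=\tfrac12 x^*\oplus\tfrac12 y^*$, i.e.\ $\lambda=\tfrac12$. By $(A0)$ we have $f(x^*,x^*)=0$, and we showed $f(x^*,y^*)=0$, so
\[
f(x^*,z)\leq \max\{f(x^*,x^*),f(x^*,y^*)\}-\tfrac14\cdot\tfrac{\gamma}{2}d^2(x^*,y^*)=-\tfrac{\gamma}{8}d^2(x^*,y^*)<0.
\]
This contradicts $x^*\in S(f)$, which requires $f(x^*,z)\geq 0$, since $z\in X$.

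Hence $x^*=y^*$ and $S(f)$ has at most one element. The only point needing care is the use of $(A2)$ to obtain $f(x^*,y^*)=0$. Strictly, we only need $f(x^*,y^*)\leq 0$, which follows directly from pseudomonotonicity applied to $f(y^*,x^*)\geq 0$. No completeness or weak-convergence tools are needed; the argument is purely algebraic in the strong quasiconvexity inequality.
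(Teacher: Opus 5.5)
Your proof is correct and matches the paper's argument essentially verbatim: pseudomonotonicity yields $f(x^*,y^*)\leq 0$, and then strong quasiconvexity of $f(x^*,\cdot)$ at the midpoint together with $(A0)$ gives $f(x^*,z)\leq -\frac{\gamma}{8}d^2(x^*,y^*)<0$, which contradicts $x^*\in S(f)$. The paper runs the same argument with the roles of the two solutions interchanged.
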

\begin{proof}
If $x_1, x_2 \in S(f)$ are such that $x_1 \neq x_2$, then $f(x_1,x_2) \geq 0$ and $f(x_2,x_1) \geq 0$. By pseudomonotonicity, we have $f(x_2,x_1) = 0$. Using the strong quasiconvexity of $f(x_2, \cdot)$, we get
\[
f\left(x_2,\tfrac{1}{2} x_2 \oplus \tfrac{1}{2}x_1\right) \leq \mathrm{max}\{f(x_2,x_1),f(x_2,x_2)\} - \frac{\gamma}{8}d^2(x_1,x_2).
\]
Thus, with $(A0)$, we have $f\left(x_2,\tfrac{1}{2} x_2 \oplus \tfrac{1}{2}x_1\right) < 0$, contradicting that $x_2$ is an equilibrium point.
\end{proof}

In the context of $\R^d$, Iusem and Lara in \cite{IusemLara2021} further prove that there always exists an equilibrium point whenever the assumptions $(A0)$, $(A1)$, $(A2)$ and $(A4)$ hold (cf.\ \cite[Proposition 3.1]{IusemLara2021}). The proof given in \cite{IusemLara2021} relies on local compactness arguments which are not available in general Hadamard spaces (and already not in infinite-dimensional Hilbert spaces). However, by generalizing a weak compactness argument developed in \cite{KHATIBZADEHMOHEBBI2021} for the case where $f$ is convex in its right argument (extending the approach taken in \cite{IusemKassaySosa09,IUSEM2003} over Hilbert spaces and in \cite{COLAO201261} over Hadamard manifolds), we can generalize all of \cite[Proposition 3.1]{IusemLara2021} to Hadamard spaces.

For this, we rely on two notions of coercivity. The first is the following usual notion for functions of one argument:

\begin{definition} 
Let $o \in X$. A function $h: X \rightarrow \R$ is called \emph{1-supercoercive} if 
\[
\liminf_{d(o,x)\rightarrow\infty}\frac{h(x)}{d(o,x)}>0.
\]
\end{definition}

The second is a notion for bifunctions, originally introduced in a linear setting in \cite{IusemKassaySosa09}. 

\begin{definition}[{cf.\ \cite[Assumption P5]{KHATIBZADEHMOHEBBI2021}}]
Let $o \in X$. A bifunction $f: X \times X \rightarrow \R$ is called \emph{coercive as a bifunction} if for any sequence $\{x_n\}_n \subseteq X$ satisfying $\lim_{n\to\infty} d(o,x_n) = \infty$, there exists $u \in X$ and $n_0 \in \N$ such that $f(x_n, u) \leq 0$ for all $n \geq n_0$.
\end{definition} 

Generalizing parts of \cite[Theorem 1]{Lara2022} to the Hadamard setting, we now first show that any lsc, strongly quasiconvex function is 1-supercoercive.

\begin{lemma}\label{shortt}
Let $h: X \rightarrow \R$ be lsc and strongly quasiconvex with modulus $\gamma$. Then $h$ is 1-supercoercive. 
\end{lemma}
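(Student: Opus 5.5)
The plan is to follow Lara's linear argument: first show that $h$ is bounded below on bounded sets, then use strong quasiconvexity along geodesics from a fixed base point to get growth in $d(o,x)$. Fix $o\in X$. Since any two base points differ by a bounded amount, we may take $o$ to be any convenient point.

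\textbf{Step 1 (lower bound on a ball).} We claim $h$ is bounded below on the closed ball $B:=\overline{B}(o,2)$. Suppose not, and take $x_k\in B$ with $h(x_k)\to-\infty$. The sequence is bounded, so by Lemma \ref{Lemma7P} it has a $\Delta$-cluster point $\bar x$. Closed balls in a Hadamard space are closed and convex, so Lemma \ref{Lemma8P} gives $\bar x\in B$. Since $h$ is lsc and quasiconvex, Lemma \ref{Lemma9P} says $h$ is $\Delta$-lsc. Hence $h(\bar x)\le\liminf h(x_{k_l})=-\infty$, which contradicts $h$ being real-valued. So $m:=\inf_B h>-\infty$.

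\textbf{Step 2 (growth estimate).} Let $x$ satisfy $d(o,x)=:t>2$, and set $\lambda:=1/t\in(0,1/2)$ and $z:=(1-\lambda)o\oplus\lambda x$. Then $d(o,z)=\lambda t=1$, so $z\in B$. Strong quasiconvexity gives
\[
m\le h(z)\le\max\{h(o),h(x)\}-\lambda(1-\lambda)\tfrac{\gamma}{2}t^2=\max\{h(o),h(x)\}-\tfrac{\gamma}{2}(t-1).
\]
Once $t$ is large enough that $\tfrac{\gamma}{2}(t-1)>h(o)-m$, the maximum cannot be $h(o)$, because that would give $m<m$. Therefore
\[
h(x)\ge m+\tfrac{\gamma}{2}(d(o,x)-1)
\]
for all such $x$.

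\textbf{Step 3 (conclusion).} Dividing the bound from Step 2 by $d(o,x)$ and letting $d(o,x)\to\infty$ gives
\[
\liminf_{d(o,x)\to\infty}\frac{h(x)}{d(o,x)}\ge\frac{\gamma}{2}>0,
\]
which is 1-supercoercivity.

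The main obstacle is Step 1. In infinite dimensions there is no compactness, so the lower bound on bounded sets has to come from $\Delta$-compactness combined with $\Delta$-lsc, which the cited lemmas provide. Once that bound is in hand, Step 2 is a direct computation.
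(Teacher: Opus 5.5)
Your proof is correct and follows essentially the same route as the paper: you get a lower bound for $h$ on a bounded set from Lemma \ref{Lemma7P} and the $\Delta$-lower semicontinuity of Lemma \ref{Lemma9P}, then apply strong quasiconvexity along the geodesic towards $x$ with $\lambda\approx 1/d(o,x)$, which yields the limit $\frac{\gamma}{2}$. The differences are cosmetic. You take the fixed endpoint to be $o$ itself and bound $h$ uniformly on a fixed ball, which gives the explicit estimate $h(x)\geq m+\frac{\gamma}{2}(d(o,x)-1)$. The paper instead uses an arbitrary second point $y$ and argues sequentially via $\liminf_{k\to\infty} h(z_k)>-\infty$.
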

\begin{proof}
Fix $o,y\in X$ and let $\{x_k\}_k \subseteq X$ with $d(o,x_k) \rightarrow \infty$. Define
\[
z_k := \frac{d(o,x_k)}{1+d(o,x_k)}y \oplus \frac{1}{1+d(o,x_k)}x_k.
\]
Notice that $\{z_k\}_k$ is bounded as $d$ is convex. As $h$ is strongly quasiconvex, we deduce (dividing by $d(o,x_k)$) that
\begin{equation*}
\frac{h(z_k)}{d(o, x_k)} \leq \max\left\{\frac{h(y)}{d(o,x_k)}, \frac{h(x_k)}{d(o,x_k)}\right\} - \frac{\gamma}{2} \frac{d^2(y,x_k)}{\left(1+d(o,x_k)\right)^2}.
\end{equation*}
We will now show that $\liminf_{k\to\infty} h(z_k) > -\infty$. Suppose not. Then, there exists a subsequence $\{z_{k_l}\}_l$ of $\{z_k\}_k$ such that $h(z_{k_l}) \rightarrow -\infty$. However, as $\{z_k\}_k$ and thus $\{z_{k_l}\}_l$ are bounded, by Lemma \ref{Lemma7P}, there exists a subsequence $\{z_{k_{l_j}}\}_j$ of $\{z_{k_l}\}_l$ with $z_{k_{l_j}} \rightarrow^\Delta z$ for some $z \in X$. As further with Lemma \ref{Lemma9P}, $h$ is $\Delta$-lsc, we obtain that $\liminf_{j\to\infty} h(z_{k_{l_j}}) \geq h(z)$ contradicting $h(z_{k_l}) \rightarrow -\infty$. Hence, $\liminf_{k\to\infty} h(z_k) > -\infty$ after all and thus $\liminf_{k\to\infty} h(z_k)/d(o,x_k) \geq 0$. Further, $h(y)/d(o,x_k) \to 0$. As however $\lim_{k\to\infty} \frac{\gamma}{2} d^2(y,x_k)/\left(1+d(o,x_k)\right)^2 = \frac{\gamma}{2}$, it must hold that $\liminf_{k\to\infty} h(x_k)/d(o,x_k) \geq \frac{\gamma}{2} > 0$. Hence $h$ is 1-supercoercive.
\end{proof}

From this it follows that, under suitable conditions, $f$ is coercive as a bifunction:

\begin{lemma}\label{coercive}
Let $f: X \times X \rightarrow \R$ be a bifunction satisfying $(A1)$, $(A2)$ and $(A4)$. Then $f$ is coercive as a bifunction. 
\end{lemma}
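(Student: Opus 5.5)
The plan is to produce the point $u$ directly by fixing it in advance and using Lemma \ref{shortt} on the right argument of $f$ at that point, then transferring the resulting sign information via pseudomonotonicity. Concretely, let $o\in X$ be the base point from the definition of coercivity as a bifunction, and let $\{x_n\}_n\subseteq X$ satisfy $d(o,x_n)\to\infty$. I will take $u:=o$; any fixed point of $X$ would work equally well.

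First, consider $h:=f(u,\cdot)$. By $(A1)$, $h$ is lsc, and by $(A4)$, $h$ is strongly quasiconvex with modulus $\gamma$. Hence Lemma \ref{shortt} applies, and $h$ is 1-supercoercive. The proof of Lemma \ref{shortt} works for an arbitrary base point $o$, so we may use the same $o$ as in the coercivity definition. Since $d(o,x_n)\to\infty$, 1-supercoercivity gives
\[
\liminf_{n\to\infty}\frac{f(u,x_n)}{d(o,x_n)}>0.
\]
Consequently there is $n_0\in\N$ with $f(u,x_n)>0$, in particular $f(u,x_n)\geq 0$, for all $n\geq n_0$. For $n$ large, $d(o,x_n)>0$ also holds, so the quotient is well-defined.

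Second, pseudomonotonicity $(A2)$, applied to the pair $(u,x_n)$, turns $f(u,x_n)\geq 0$ into $f(x_n,u)\leq 0$ for all $n\geq n_0$. This is exactly the requirement in the definition of coercivity as a bifunction, with $u=o$ and this $n_0$.

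There is no serious obstacle here: all of the analytic work (the $\Delta$-compactness argument) is already contained in Lemma \ref{shortt}. The only points to check are that Lemma \ref{shortt} is applied to the right argument $f(u,\cdot)$, where both lower semicontinuity and strong quasiconvexity are available, and that the base point in 1-supercoercivity can be taken to be the same $o$ as in the coercivity definition.
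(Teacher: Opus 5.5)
Your proposal is correct and follows essentially the same route as the paper. You fix a point ($u=o$, where the paper takes an arbitrary $x$), apply Lemma \ref{shortt} to $f(u,\cdot)$ via $(A1)$ and $(A4)$ to get $f(u,x_n)\geq 0$ eventually, and then conclude $f(x_n,u)\leq 0$ by pseudomonotonicity $(A2)$.
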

\begin{proof}
Fix $x \in X$. From Lemma \ref{shortt}, we get that $\liminf_{d(o,y) \to \infty} f(x,y)/d(o,y) >0$, using also $(A1)$ and $(A4)$. This in particular implies that for any sequence $\{y_n\}_n\subseteq X $ with $\lim_{n \rightarrow \infty}d(o,y_n) = \infty$, we find an $n_0$ such that $f(x,y_n) \geq 0 $ for all $n \geq n_0$. Using $(A2)$, that is pseudomonotonicity of $f$, implies the result.
\end{proof}

Next to the above coercivity properties, the proof that $S(f)$ is nonempty under suitable assumptions on $f$ rests on two lemmas derived from \cite{KHATIBZADEHMOHEBBI2021}. The first is a nonlinear version of the so-called KKM Lemma \cite{KnasterKuratowskiMazurkiewicz1929} (see also \cite[Lemma 1]{Fan1961}), derived for finite-dimensional Hadamard manifolds in \cite{COLAO201261} and then for Hadamard spaces in \cite{KHATIBZADEHMOHEBBI2021}.

\begin{lemma}[{cf.\ \cite[Lemma 1.8]{KHATIBZADEHMOHEBBI2021}}]\label{Lemma1.8}
Let $G:X\rightarrow 2^X$ be such that for each $x \in X$, $G(x)$ is $\Delta$-closed. Suppose that
\begin{enumerate}
\item[(a)] for all $x_1,\dots,x_m \in X$, $\mathrm{conv}(\{x_1,\dots,x_m\})\subseteq \bigcup_{i=1}^m G(x_i)$,
\item[(b)] there exists $x_0 \in X$ such that $G(x_0)$ is $\Delta$-compact.
\end{enumerate}
Then $\bigcap_{x\in X}G(x)\neq \emptyset$.
\end{lemma}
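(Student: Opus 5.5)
The proof is a Hadamard-space version of the classical KKM argument. Set $C:=\bigcap_{x\in X}G(x)$, suppose for contradiction that $C=\emptyset$, and use (b) to reduce to finitely many sets. Since $G(x_0)$ is $\Delta$-compact and each $G(x)$ is $\Delta$-closed, the sets $G(x)\cap G(x_0)$ are $\Delta$-closed subsets of a $\Delta$-compact set. $\Delta$-compactness is a sequential notion, so I will first check that it yields the finite intersection property for $\Delta$-closed subsets. For a countable family $F_1\supseteq F_2\supseteq\cdots$ of nonempty $\Delta$-closed subsets of $G(x_0)$, pick $y_n\in F_n$. By $\Delta$-compactness a subsequence $\Delta$-converges, and its limit lies in every $F_n$ by $\Delta$-closedness. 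For an arbitrary family I would pass to the $\Delta$-topology, or argue directly with asymptotic centers. The fallback is to use that $G(x_0)$ is bounded, which holds in all applications because it is $\Delta$-compact, and then apply the argument of \cite{KHATIBZADEHMOHEBBI2021}. Either way, the goal is finitely many points $x_1,\dots,x_m$ with $G(x_0)\cap\bigcap_{i=1}^m G(x_i)=\emptyset$.

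Next comes the finite-dimensional core. Put $P:=\{x_0,\dots,x_m\}$ and consider the standard simplex $\Delta_m\subseteq\R^{m+1}$. Build a continuous map $\phi:\Delta_m\to\mathrm{conv}(P)$ by iterated geodesic combinations,
\[
\phi(\lambda)=\Big(\big((x_0\oplus\cdots)\oplus x_{m-1}\big)\oplus x_m\Big)
\]
with the appropriate renormalized weights. Each face spanned by vertices $I$ is sent into $\mathrm{conv}(\{x_i\}_{i\in I})$. Continuity follows from the joint continuity of $(x,y,t)\mapsto(1-t)x\oplus ty$ in $\CAT$ spaces. Define $E_i:=\phi^{-1}(G(x_i))$. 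These sets are closed, because metric convergence implies $\Delta$-convergence and hence metrically convergent sequences stay inside the $\Delta$-closed sets $G(x_i)$. By hypothesis (a), every face satisfies $\mathrm{conv}_{\Delta_m}(e_I)\subseteq\bigcup_{i\in I}E_i$. The classical KKM lemma in $\R^{m+1}$ then gives $\bigcap_i E_i\neq\emptyset$. Applying $\phi$ to a point of this intersection yields a point of $\bigcap_{i=0}^m G(x_i)$, which is the desired contradiction.

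I expect the main obstacle to be the first step, the passage from sequential $\Delta$-compactness to the finite intersection property for arbitrary families. The $\Delta$-closed sets do not obviously form the closed sets of a topology, and a general Hadamard space need not be separable. I would first try to work inside the separable closed convex hull of the points involved, or use that the $\Delta$-closed sets that occur here are intersections that are again closed under $\Delta$-limits. If that fails, I would invoke the known fact that bounded closed convex sets are compact in the coarser convex-closure topology, as in \cite{KHATIBZADEHMOHEBBI2021}, and cite that result directly.
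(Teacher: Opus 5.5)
Your finite-dimensional step is sound. The iterated geodesic map $\phi$ is continuous on the whole simplex: where a renormalized weight has the form $0/0$, the first index with positive weight receives weight $1$, and the earlier iterate stays in the bounded set $\mathrm{conv}(P)$. It sends each face into the convex hull of the corresponding points, and the classical KKM lemma then gives the finite intersection property of $\{G(x)\}_{x\in X}$.

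\textbf{The gap.} The genuine gap is your first step, which is the only place hypothesis (b) enters, and you leave it open. Your worry about topology is actually unfounded. The $\Delta$-closed sets are the closed sets of a topology $\tau_\Delta$: a finite union of $\Delta$-closed sets is $\Delta$-closed, because every subsequence of a $\Delta$-convergent sequence has the same $\Delta$-limit. Your nested-sequence argument shows exactly that $G(x_0)$ is \emph{countably} compact in $\tau_\Delta$. But the family is indexed by all of $X$, and countable compactness does not give compactness. So you do not reach $\bigcap_{x\in X}G(x)\neq\emptyset$.

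\textbf{Why the fallbacks fail.} None of them closes this.
\begin{itemize}
\item A separability reduction works only if $G(x_0)$ itself is separable. Then $\tau_\Delta$, being coarser than the metric topology, is Lindel\"of on $G(x_0)$, and Lindel\"of plus countably compact gives compact. For nonseparable $X$, passing to a separable closed convex hull does not help, because the intersection still ranges over all $x\in X$.
\item Take points $y_A\in\bigcap_{x\in A}G(x)$ for finite sets $A$, and form an asymptotic center along a net or ultrafilter. Via metric projection, this point is only guaranteed to lie in closed \emph{convex} sets that eventually contain the $y_A$. Membership in a merely $\Delta$-closed set can only be tested along sequences.
\item For the same reason, compactness of bounded closed convex sets in the co-convex topology says nothing about nonconvex $G(x)$.
\item Deferring to ``the argument of \cite{KHATIBZADEHMOHEBBI2021}'' for this step amounts to citing the lemma itself. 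That is all the paper does: it gives no proof and imports the statement from \cite[Lemma 1.8]{KHATIBZADEHMOHEBBI2021}.
\end{itemize}

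\textbf{What you do get.} Once completed, your argument yields the case the paper actually uses. In Theorem \ref{nonemptyness_thrm} the values $G(y)=L_f(n,y)$ are bounded, closed and convex, and then the finite intersection property suffices. Fix $p$ and let $p_A$ be the projection of $p$ onto the finite intersection indexed by $A$. For $A\subseteq B$ these satisfy $d^2(p_A,p_B)\leq d^2(p,p_B)-d^2(p,p_A)$, so they form a Cauchy net whose limit lies in every $G(y)$.

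For the lemma as stated, with only $\Delta$-closed values and no separability, you still need a real compactness argument for $G(x_0)$ in a topology in which all $G(x)$ are closed.
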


The second lemma states that if it is possible to find an equilibrium point $x$ for $f$ on some closed ball in $X$ and one point $y$ in the interior of that ball such that $f(x,y) \leq 0$, then $x$ is an equilibrium point for $f$ on the whole space $X$. To formulate it, we consider the following definitions akin to \cite{KHATIBZADEHMOHEBBI2021}: Fix $o\in X$. For any $n \in \N$, define
\[
X_n := \{ x \in X \mid d(o,x) \leq n\}\text{ with }X_n^\circ := \{ x \in X \mid d(o,x) < n\}.
\]
Further, given a bifunction $f: X \times X \rightarrow \R$ and $y\in X$ as well as $n \in \N$, define
\[
L_f(n,y):= \{ x \in X_n \mid f(y,x) \leq 0\}.
\]

We can now adapt \cite[Lemma 2.1]{KHATIBZADEHMOHEBBI2021} to our setting of strongly quasiconvex bifunctions. Here, we follow a modified version of the proof given for a preceding result of this type in linear spaces by Iusem and Sosa (cf.\ \cite[Lemma 3.7]{IUSEM2003}).

\begin{lemma}\label{ball}
Let $f$ be a bifunction satisfying $(A0)$, $(A2)$ and $(A4)$. If for some $n \in \N$ and some $\overline{x} \in \bigcap_{y\in X_n}L_f(n,y)$ there exists $\overline{y} \in X^\circ_n$ such that $f(\overline{x},\overline{y})\leq 0$, then $f(\overline{x},y)\geq 0$ for all $y \in X$.
\end{lemma}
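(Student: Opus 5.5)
The plan has two steps. First, show that $\overline{x}$ already solves the equilibrium problem restricted to the ball $X_n$. Second, use the interior point $\overline{y}$ and strong quasiconvexity of $f(\overline{x},\cdot)$ to push this out to all of $X$ by a contradiction argument. Unpacking the hypothesis, $\overline{x}\in\bigcap_{y\in X_n}L_f(n,y)$ means $\overline{x}\in X_n$ and $f(y,\overline{x})\leq 0$ for all $y\in X_n$. Throughout we use that $X_n$ is convex, since $d(o,\cdot)$ is convex on a Hadamard space.

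\emph{Step 1 (Minty-type step).} Fix $y\in X_n$ with $y\neq \overline{x}$. For $s\in(0,1)$ put $u_s:=(1-s)\overline{x}\oplus s y\in X_n$, so that $f(u_s,\overline{x})\leq 0$. Since $u_s$ lies on the geodesic from $\overline{x}$ to $y$, $(A0)$ and strong quasiconvexity $(A4)$ of $f(u_s,\cdot)$ give
\[
0=f(u_s,u_s)\leq \max\{f(u_s,\overline{x}),f(u_s,y)\}-s(1-s)\tfrac{\gamma}{2}d^2(\overline{x},y).
\]
Because the right-hand side is nonnegative and the subtracted term is strictly positive, the maximum is strictly positive. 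As $f(u_s,\overline{x})\leq 0$, the maximum must be $f(u_s,y)$, so $f(u_s,y)\geq s(1-s)\tfrac{\gamma}{2}d^2(\overline{x},y)>0$. We must now transfer this positivity to $f(\overline{x},y)$.

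This transfer is the step I expect to be the main obstacle. The only tools available are $(A0)$, $(A2)$ and $(A4)$, and pseudomonotonicity only turns $f(u_s,y)\geq 0$ into $f(y,u_s)\leq 0$, which does not involve $\overline{x}$. What I would try first is to exploit the quantitative gap $s(1-s)\frac{\gamma}{2}d^2$ together with strong quasiconvexity of $f(\overline{x},\cdot)$ along the same segment, aiming to rule out $f(\overline{x},y)<0$ directly. If that fails, I would let $s\to 0^+$ and use upper semicontinuity of $f(\cdot,y)$ as in $(A1)$:
\[
f(\overline{x},y)\geq\limsup_{s\to0^+}f(u_s,y)\geq 0.
\]
This fallback uses a hypothesis not listed in the lemma. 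I expect the lemma is only applied later under $(A1)$ anyway, but with only $(A0)$, $(A2)$ and $(A4)$ available this step may not close, and the statement might need $(A1)$ added. Granting this step, $f(\overline{x},y)\geq0$ for all $y\in X_n$. In particular $f(\overline{x},\overline{y})\geq 0$, and together with the hypothesis this gives $f(\overline{x},\overline{y})=0$.

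\emph{Step 2 (globalization).} Suppose, for contradiction, that $f(\overline{x},y)<0$ for some $y\in X$. Then $y\notin X_n$ by Step 1, so in particular $y\neq\overline{y}$. Since $d(o,\overline{y})<n$, we can choose $t\in(0,1)$ small enough that $w:=(1-t)\overline{y}\oplus t y$ satisfies $d(o,w)\leq (1-t)d(o,\overline{y})+t\,d(o,y)<n$, hence $w\in X_n$. Strong quasiconvexity of $f(\overline{x},\cdot)$ then yields
\[
f(\overline{x},w)\leq\max\{f(\overline{x},\overline{y}),f(\overline{x},y)\}-t(1-t)\tfrac{\gamma}{2}d^2(\overline{y},y)<0,
\]
since the maximum is $0$ and the subtracted term is strictly positive. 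This contradicts Step 1, as $w\in X_n$. Hence $f(\overline{x},y)\geq 0$ for all $y\in X$.
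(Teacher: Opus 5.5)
Your argument is the paper's proof: the same Minty-type step along $w_\lambda=(1-\lambda)\overline{x}\oplus\lambda y$, closed by $f(\overline{x},y)\geq\limsup_{\lambda\to0}f(w_\lambda,y)\geq 0$, followed by the same globalization through a point of the segment between $\overline{y}$ and $y$ that lies in $X_n$ (you argue by contradiction, whereas the paper directly obtains $f(\overline{x},y)>0$). You are also right about the hypotheses: the paper's proof likewise uses that ``$f$ is usc in its left argument'', which is part of $(A1)$ and should be added to the statement (the lemma is only applied under $(A1)$ in Theorem~\ref{nonemptyness_thrm}), and it cannot be dropped, since on $X=\mathbb{R}$ with $o=0$ the bifunction $f(x,y)=y^2-x^2$ for $x\neq 0$, $f(0,y)=y^2-2y$ satisfies $(A0)$, $(A2)$ and $(A4)$ with $\gamma=2$, and $\overline{x}=\overline{y}=0$ meets all hypotheses for every $n\geq 1$, yet $f(0,\tfrac12)<0$ --- so your first idea of closing Step~1 via the quantitative gap alone cannot succeed.
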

\begin{proof}
Notice that by definition 
\[
\bigcap_{y\in X_n}L_f(n,y) = \bigcap_{y\in X_n} \{ x \in X_n \mid f(y,x) \leq 0\} = \{ x \in X_n \mid f(y,x) \leq 0 \text{ for all } y \in X_n\}
\]
for any $n \in \N$. Thus, if $\overline{x} \in \bigcap_{y\in X_n}L_f(n,y)$ for some $n \in \N$, we know $f(y,\overline{x}) \leq 0$ for any $y \in X_n$. We now show that $f(\overline{x},y) \geq 0$ for all $y \in X_n$. For that, let $y\in X_n$. If $y=\overline{x}$, the claim is obvious by $(A0)$. So suppose that $y\neq\overline{x}$ and define $w_\lambda:= (1-\lambda)\overline{x}\oplus \lambda y$ for $\lambda\in [0,1]$. By $(A0)$ and $(A4)$, we get
\[
0=f(w_\lambda,w_\lambda)\leq \max\{f(w_\lambda,y),f(w_\lambda,\overline{x})\}-\lambda(1-\lambda) \frac{\gamma}{2}d^2(y,\overline{x}).
\]
As $w_\lambda\in X_n$ since balls are convex, we get $f(w_\lambda,\overline{x})\leq 0$ from the above. Since we have $\lambda(1-\lambda) \frac{\gamma}{2}d^2(y,\overline{x})>0$ for $\lambda\in (0,1)$, it must hold that $f(w_\lambda,y)> 0$ for all such $\lambda$. By the continuity of geodesics, we have $w_\lambda\to\overline{x}$ for $\lambda\to 0$ and so we get
\[
f(\overline{x},y) \geq \limsup_{\lambda\to 0} f(w_\lambda,y)\geq 0
\]
since $f$ is usc in its left argument. It is left to show that $f(\overline{x},y) \geq 0$ for all $y \in X\setminus X_n$. For that, let $y \in X \backslash X_n$ be arbitrary. Then, with $\overline{y} \in X^\circ_n$ and by the continuity of geodesics, there exists $\lambda \in (0,1)$ such that $z := (1-\lambda) y \oplus\lambda \overline{y} \in X_n$. As $z\in X_n$ and by $(A4)$, that is strong quasiconvexity, we get
\[
0 \leq f(\overline{x},z) = f(\overline{x}, (1-\lambda) y \oplus\lambda \overline{y}) \leq \mathrm{max}\{f(\overline{x},y),f(\overline{x},\overline{y})\}-\lambda(1-\lambda) \frac{\gamma}{2}d^2(y,\overline{y}).
\]
As by assumption $f(\overline{x},\overline{y}) \leq 0$ and as further $\lambda(1-\lambda)\frac{\gamma}{2}d^2(y,\overline{y})>0$, it must hold that $f(\overline{x},y)>0$ and we are done.
\end{proof}

We can now easily transfer \cite[Theorem 2.4]{KHATIBZADEHMOHEBBI2021} to the strongly quasiconvex case, and thereby extend \cite[Proposition 3.1]{IusemLara2021} to the setting of Hadamard spaces.

\begin{theorem}\label{nonemptyness_thrm}
Let $f$ be a bifunction satisfying $(A0)$, $(A1)$, $(A2)$ and $(A4)$. Then $S(f)$ is nonempty.
\end{theorem}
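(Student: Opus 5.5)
The plan is to follow the scheme of \cite[Theorem 2.4]{KHATIBZADEHMOHEBBI2021}: produce an equilibrium point on each ball $X_n$ with the KKM Lemma \ref{Lemma1.8}, and then use the coercivity from Lemma \ref{coercive} together with Lemma \ref{ball} to pass to the whole space. Fix $o\in X$ and $n\in\N$. On $X_n$ I would consider the map $G(y):=L_f(n,y)=\{x\in X_n\mid f(y,x)\leq 0\}$ for $y\in X_n$. This is a map defined on the Hadamard-like convex set $X_n$, so Lemma \ref{Lemma1.8} has to be applied to the closed convex subset $X_n$, which is itself a Hadamard space.

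\textbf{Hypotheses of Lemma \ref{Lemma1.8} for $G$.}
\begin{enumerate}
\item[(1)] \emph{$\Delta$-closedness.} For fixed $y$, the sublevel set $\{x\mid f(y,x)\leq 0\}$ is closed by $(A1)$ and convex by $(A4)$, since $f(y,\cdot)$ is quasiconvex. Hence $G(y)$ is closed and convex, so $\Delta$-closed by Lemma \ref{Lemma8P}. It is also bounded, so $\Delta$-compact by Lemma \ref{delta}, which gives (b).
\item[(2)] \emph{The KKM condition (a).} Let $y_1,\dots,y_m\in X_n$ and $z\in\mathrm{conv}\{y_1,\dots,y_m\}$, and suppose $f(y_i,z)>0$ for all $i$. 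Pseudomonotonicity $(A2)$ gives only $f(z,y_i)\leq 0$ when $f(y_i,z)\geq 0$, which is exactly what is needed. Indeed, $f(z,y_i)\le 0$ for all $i$, and $\{w\mid f(z,w)\leq 0\}$ is convex, so $f(z,z)\leq 0$. This alone is no contradiction with $(A0)$. The strict inequality has to come from strong quasiconvexity: $z$ lies in the convex hull, so by iterating strong quasiconvexity along geodesics one gets $f(z,z)<\max_i f(z,y_i)\leq 0$ unless $z=y_i$ for some $i$. If $z=y_i$, then $f(y_i,z)=0$ by $(A0)$, contradicting $f(y_i,z)>0$. Otherwise the strict inequality contradicts $(A0)$.
\end{enumerate}
For the strict inequality I would use that the iterated convex hull is built by geodesic segments, say $\mathrm{conv}(A)=\bigcup_j C_j$ with $C_0=A$ and $C_{j+1}$ the union of geodesics between points of $C_j$. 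Every $z\notin A$ then lies in the interior of a segment between two points $u,v$ with $\max\{f(z,u),f(z,v)\}\leq 0$ by induction, using plain quasiconvexity. Strong quasiconvexity then gives $f(z,z)<0$ as soon as $z$ is strictly inside a nondegenerate segment, which is the case since $z\ne u,v$.

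\textbf{Passing to the whole space.} Lemma \ref{Lemma1.8} yields, for each $n$, a point $x_n\in\bigcap_{y\in X_n}L_f(n,y)$. I then split into two cases.
\begin{enumerate}
\item[(i)] Suppose for some $n$ we have $d(o,x_n)<n$. Then take $\overline{y}:=x_n\in X_n^\circ$, so $f(x_n,\overline{y})=0\leq 0$, and Lemma \ref{ball} shows $x_n\in S(f)$.
\item[(ii)] Otherwise $d(o,x_n)=n\to\infty$. Lemma \ref{coercive} then provides $u$ and $n_0$ with $f(x_n,u)\leq 0$ for $n\geq n_0$. Choosing $n>\max\{n_0,d(o,u)\}$ gives $u\in X_n^\circ$, and Lemma \ref{ball} again yields $x_n\in S(f)$.
\end{enumerate}

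\textbf{Expected obstacle.} The delicate step is the KKM condition (a). There one must justify the strict inequality via strong quasiconvexity, relying on the inductive description of the convex hull in Hadamard spaces, and handle the degenerate case where $z$ coincides with a vertex or an endpoint.
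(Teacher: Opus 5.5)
Your proof is correct and follows the paper's argument essentially step by step: you apply the KKM Lemma~\ref{Lemma1.8} on $X_n$ with $G(y)=L_f(n,y)$, and then treat the two cases via Lemma~\ref{ball} (with $\overline{y}=x_n$ in the first case) and Lemma~\ref{coercive}. The only difference is that you verify condition (a) by hand, whereas the paper cites proper quasimonotonicity from \cite[Lemma 2.3]{KHATIBZADEHMOHEBBI2021}; your strong-quasiconvexity argument is valid but more than is needed, since the contrapositive of $(A2)$ already turns $f(y_i,z)>0$ into $f(z,y_i)<0$, and convexity of the strict sublevel set $\{w\mid f(z,w)<0\}$ then gives $f(z,z)<0$ directly.
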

\begin{proof}
By \cite[Lemma 2.3]{KHATIBZADEHMOHEBBI2021}, $f$ is properly quasimonotone, that is $\min_{x\in A}f(x,y)\leq 0$ for every finite set $A \subseteq X$ and every $y \in \mathrm{conv}(A)$. Further, by Lemma \ref{coercive}, $f$ is coercive as a bifunction. The result now follows by essentially exactly the same arguments as in \cite[Theorem 2.4]{KHATIBZADEHMOHEBBI2021} but, for the sake of completeness, we present them here. Let $n \in \N$. We use Lemma \ref{Lemma1.8} with $X_n$ instead of $X$ and $G(y) := L_f(n,y)$. To show that we are in its setting, consider $x_0,\dots, x_k \in X_n$ and $\overline{x} \in \mathrm{conv}(\{x_0,\dots,x_k\})$. But then $\overline{x} \in \bigcup_{i=0}^kL_f(n,x_i) $ as $\overline{x} \in X_n$ by convexity of $X_n$ and $\mathrm{min}_{0\leq i \leq k}f(x_i, \overline{x}) \leq 0$ by proper quasimonotonicity of $f$. Further, we have that $G(y)=\{x\in X_n\mid f(y,x) \leq 0\}$ is closed as $f$ is lsc and convex as $f$ is (strongly) quasiconvex in its second argument. Therefore, it is $\Delta$-closed by Lemma \ref{Lemma8P}. $G(y)$ is also bounded as it is contained in $X_n$. Thus, by Lemma \ref{delta}, it is $\Delta$-compact for all $y \in X$. We can hence apply Lemma \ref{Lemma1.8}, deducing that $\bigcap_{y\in X_n} L_f(n,y) \neq \emptyset$ for all $n \in \N$. For each $n$, we choose $x_n \in \bigcap_{y \in X_n} L_f(n,y)$ and distinguish two cases.

\textbf{Case 1:} There is an $n \in \N$ such that $d(o,x_n) < n$. Then, $x_n \in S(f)$ by Lemma \ref{ball}, and so we are done.

\textbf{Case 2:} $d(o,x_n) = n $ for all $n \in \N$. But then, as $f$ is coercive as a bifunction by Lemma \ref{coercive}, there exists $u \in X$ and $n_0\in\mathbb{N}$ such that $f(x_n,u) \leq 0$ for all $n \geq n_0$. Consider $n' \geq n_0$ such that $d(o,u) < n'$. Then $f(x_{n'},u) \leq 0$ and $u \in X_{n'}^\circ$. Again we are done by Lemma \ref{ball}.
\end{proof}

Using the above Theorem \ref{nonemptyness_thrm}, we can in particular show that a lsc strongly quasiconvex function $h: X \rightarrow \R$ over a Hadamard space has a minimizer, answering a question left open in the work \cite{Pischke2025} of the second author.

\begin{corollary}\label{minExists}
Let $h : X \rightarrow \R$ be a lsc strongly quasiconvex function. Then  $\mathrm{argmin}_{x\in X}h(x) $ is nonempty. 
\end{corollary}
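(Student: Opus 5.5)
The plan is to apply Theorem \ref{nonemptyness_thrm} to the bifunction $f_h(x,y):=h(y)-h(x)$ from Example \ref{ex_min_is_equi}, and then use the identity $S(f_h)=\argmin_{x\in X}h(x)$ noted there. For this, I have to check that $f_h$ satisfies $(A0)$, $(A1)$, $(A2)$ and $(A4)$.

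Assumption $(A0)$ is immediate, since $f_h(x,x)=h(x)-h(x)=0$.

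For $(A1)$, fix $x$. The map $f_h(x,\cdot)=h(\cdot)-h(x)$ is $h$ shifted by a constant, hence lsc. Fix $y$ instead. The map $f_h(\cdot,y)=h(y)-h(\cdot)$ is the negative of an lsc function plus a constant. Therefore $\limsup_k f_h(x_k,y)=h(y)-\liminf_k h(x_k)\leq h(y)-h(x)$ whenever $x_k\to x$, so it is usc.

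For $(A2)$, suppose $f_h(x,y)\geq 0$, that is $h(y)\geq h(x)$. Then $f_h(y,x)=h(x)-h(y)\leq 0$, which is pseudomonotonicity.

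For $(A4)$, fix $x$. The function $f_h(x,\cdot)$ differs from $h$ by the constant $-h(x)$. Subtracting a constant commutes with the maximum, so the strong quasiconvexity inequality for $h$ with modulus $\gamma$ transfers verbatim to $f_h(x,\cdot)$.

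Theorem \ref{nonemptyness_thrm} then yields some $x^*\in S(f_h)$. To see directly that $x^*$ is a minimizer, note that $h(y)-h(x^*)=f_h(x^*,y)\geq 0$ for all $y\in X$, so $x^*\in\argmin_{x\in X}h(x)$.

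I do not expect a genuine obstacle here. The only step needing any care is the upper semicontinuity in the left argument in $(A1)$, which uses that the lower semicontinuity of $h$ turns into upper semicontinuity of $-h$.
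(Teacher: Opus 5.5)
Your proposal is correct and follows the paper's proof essentially verbatim: you define $f_h(x,y):=h(y)-h(x)$, verify $(A0)$, $(A1)$, $(A2)$ and $(A4)$, apply Theorem \ref{nonemptyness_thrm}, and read off that the resulting equilibrium point minimizes $h$. Your explicit check of upper semicontinuity of $f_h(\cdot,y)$ just spells out what the paper calls ``clear''.
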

\begin{proof}
As in Example \ref{ex_min_is_equi}, we define $f_h$ by $f_h(x,y) := h(y) - h(x)$. This function clearly satisfies $(A0)$ and $(A1)$. Further, if $f_h(x,y) \geq 0$, then $h(y) - h(x) \geq 0$ and so $h(x) - h(y) \leq 0$, that is $f_h(y,x) \leq 0$. Hence $f_h$ is pseudomonotone and so satisfies $(A2)$. As $h$ is strongly quasiconvex and remains strongly quasiconvex under addition of a constant, $f_h(x,\cdot)$ is strongly quasiconvex for every $x \in X$, and so satisfies $(A4)$. We can thus apply Theorem \ref{nonemptyness_thrm} and obtain $x^{*} \in X$ such that $f_h(x^{*}, y) \geq 0$ for all $y \in X$. This however immediately yields that $x^{*} \in \mathrm{argmin}_{x\in X}h(x)$.
\end{proof}

\section{A proximal point algorithm}

\subsection{The method and preliminaries}

We now move to the first method for solving the equilibrium problem \eqref{equil} over a Hadamard space $(X,d)$ for a given bifunction $f: X\times X \rightarrow \R$ that satisfies assumptions among $(A1)$ -- $(A6)$. As before, we write $S(f)$ for the associated solution set.

The method we consider is the following proximal point type method: For a given start point $x_0\in X$ and a sequence $\{\beta_k\}_{k} \subseteq (0,\infty)$, recursively define $x_{k+1}$ by choosing
\[
x_{k+1} \in \argmin_{y\in X}\left\{f(x_k,y)+ \frac{1}{2\beta_k}d^2(x_k,y)\right\}.\tag{PPA}\label{PPA}
\]
This method represents a metric variant of one of the proximal point type methods studied by Iusem and Lara (cf.\ \cite[Algorithm 1]{IusemLara2021}) in the context of the context of the above equilibrium problem \eqref{equil} over $\mathbb{R}^d$, corresponding to the second type of method discussed in the introduction which regularizes the bifunction only in its second argument. Concretely, as also highlighted throughout the introduction, the work \cite{IusemLara2021} considers the problem over a closed and convex subset $K \subseteq \mathbb{R}^d$, and for a corresponding bifunction $f: K\times K \rightarrow \R$ satisfying $(A1)$ -- $(A6)$, suitably formulated over $K$ with a corresponding solution set $S(K,f)$. We however omit such restrictions here as any closed and convex subset of a Hadamard space (and so in particular of a Hilbert space) is naturally a Hadamard space again, so that considering the problem \eqref{equil} over $X$ already offers the full generality here. This also applies to the existence results established previously.

In the setting described above, Iusem and Lara show in \cite{IusemLara2021} that this proximal point type algorithm is well-defined and that it converges under suitable assumptions on the parameters:

\begin{theorem}[{cf.\ \cite[Theorem 3.1]{IusemLara2021}\label{ILconv}}]
Let $X = \R^d$ and $K \subseteq X$ be closed and convex. Let $f:K\times K\to\mathbb{R}$ be a bifunction satisfying $(A1)$ -- $(A6)$ and let $\{\beta_k\}_k \subseteq (\frac{1}{\gamma - 8\eta}, \frac{1}{4\eta})$. Let $\{x_k\}_k$ be the sequence generated by \eqref{PPA} over $K$. Then $\{x_k\}_k$ converges to a point $x^{*} \in S(K,f)$.
\end{theorem}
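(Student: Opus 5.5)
The plan is to first secure a solution $x^*\in S(K,f)$ and the well-definedness of the iteration, and then to prove a Fejér-type inequality with respect to $x^*$ that has enough slack to force $d(x_k,x^*)\to 0$.

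\textbf{Existence and uniqueness of $x^*$.} As noted above, $K$ with the Euclidean metric is a Hadamard space. Moreover, $(A0)$ follows from $(A2)$ and $(A5)$. So Theorem \ref{nonemptyness_thrm} and Lemma \ref{uniqueness} give a unique $x^*\in S(K,f)$.

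\textbf{Well-definedness.} For fixed $x_k$, the objective $\phi_k(y):=f(x_k,y)+\frac{1}{2\beta_k}\norm{y-x_k}^2$ is lsc by $(A1)$. By Lemma \ref{shortt}, $f(x_k,\cdot)$ is $1$-supercoercive, so $f(x_k,\cdot)$ is bounded below on bounded sets and $\phi_k$ is coercive. In $\mathbb{R}^d$ its sublevel sets are therefore compact and $\argmin\phi_k\neq\emptyset$. (In general Hadamard spaces one would instead use $\Delta$-compactness together with Lemma \ref{Lemma9P}, but this requires $\phi_k$ to be quasiconvex.)

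\textbf{Key inequality.} Fix $k$ and compare $x_{k+1}$ with the test point $w_\lambda:=(1-\lambda)x_{k+1}+\lambda x^*$ for $\lambda\in(0,1)$. Minimality gives $\phi_k(x_{k+1})\le\phi_k(w_\lambda)$. Bound $f(x_k,w_\lambda)$ by $(A4)$ and the quadratic by \eqref{CN} (the Euclidean identity here):
\[
f(x_k,x_{k+1})+\tfrac{1}{2\beta_k}\norm{x_{k+1}-x_k}^2\le \max\{f(x_k,x_{k+1}),f(x_k,x^*)\}-\lambda(1-\lambda)\big(\tfrac{\gamma}{2}+\tfrac{1}{2\beta_k}\big)\norm{x_{k+1}-x^*}^2+\tfrac{1}{2\beta_k}\big((1-\lambda)\norm{x_{k+1}-x_k}^2+\lambda\norm{x^*-x_k}^2\big).
\]
Since $f(x^*,x_k)\ge 0$, pseudomonotonicity gives $f(x_k,x^*)\le 0$, and likewise $f(x_{k+1},x^*)\le 0$. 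Now split into two cases according to which term attains the max.

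\emph{Case 1: the max is $f(x_k,x_{k+1})$.} It cancels. Dividing by $\lambda$ and letting $\lambda\to 0$ gives
\[
(1+\gamma\beta_k)\norm{x_{k+1}-x^*}^2\le\norm{x_k-x^*}^2-\norm{x_{k+1}-x_k}^2 .
\]

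\emph{Case 2: the max is $f(x_k,x^*)$.} Apply $(A5)$ with $(x,y,z)=(x_k,x_{k+1},x^*)$ and use $f(x_{k+1},x^*)\le 0$:
\[
f(x_k,x^*)-f(x_k,x_{k+1})\le\eta\big(\norm{x_k-x_{k+1}}^2+\norm{x_{k+1}-x^*}^2\big).
\]
Insert this and choose a fixed $\lambda$, say $\lambda=\tfrac12$, or optimize over $\lambda$. This should produce an inequality of the form
\[
\big(c_1(\gamma\beta_k+1)-c_2\eta\beta_k\big)\norm{x_{k+1}-x^*}^2\le\norm{x_k-x^*}^2-(1-c_3\eta\beta_k)\norm{x_{k+1}-x_k}^2 .
\]
The conditions $\beta_k>\frac{1}{\gamma-8\eta}$ and $\beta_k<\frac{1}{4\eta}$ are precisely what should make the left coefficient exceed $1$ and the subtracted coefficient positive.

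\textbf{Conclusion.} In both cases we obtain $\norm{x_{k+1}-x^*}^2\le\norm{x_k-x^*}^2-c\norm{x_{k+1}-x_k}^2$ with $c>0$, together with a strict contraction factor wherever the excess coefficient is uniformly positive. Then $\norm{x_k-x^*}$ decreases, $\sum_k\norm{x_{k+1}-x_k}^2<\infty$, and summing the slack term $(\text{coef}-1)\norm{x_{k+1}-x^*}^2$ gives $\norm{x_k-x^*}\to 0$.

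\textbf{Main obstacle.} I expect the hardest step to be the bookkeeping in Case 2, namely choosing $\lambda$ so that the constants $8\eta$ and $4\eta$ come out exactly. A further difficulty is that over the open interval the excess coefficient is only positive for each $k$, not uniformly in $k$. If that happens, I would fall back on a monotone-limit argument: let $r=\lim_k\norm{x_k-x^*}$, note $\norm{x_{k+1}-x_k}\to 0$, and show that $r>0$ contradicts the per-step inequality along a subsequence where $\beta_k$ converges, which exists by compactness of $[\frac{1}{\gamma-8\eta},\frac{1}{4\eta}]$.
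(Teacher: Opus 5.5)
Your first three steps are sound and match the paper: existence and uniqueness via Theorem \ref{nonemptyness_thrm} and Lemma \ref{uniqueness}, and the two-case inequality obtained by comparing $x_{k+1}$ with $w_\lambda$, which is exactly Lemma \ref{fejer}. Well-definedness is also fine. Your compactness argument in $\R^d$ does not even need $\beta_k\le\frac{1}{4\eta}$. In general Hadamard spaces quasiconvexity of $\phi_k$ is not needed either, since each summand is $\Delta$-lsc (cf.\ Proposition \ref{Lemma10}). The Case 2 bookkeeping is not the obstacle: $\lambda=\frac12$ gives $A_k\norm{x_{k+1}-x^*}^2\le\norm{x_k-x^*}^2-(1-4\eta\beta_k)\norm{x_{k+1}-x_k}^2$ with $A_k:=\frac{1+(\gamma-8\eta)\beta_k}{2}$.

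The genuine gap is your conclusion. On the open interval neither constant is uniform in $k$. First, $1-4\eta\beta_k\to 0$ is allowed, so $\sum_k\norm{x_{k+1}-x_k}^2<\infty$ does not follow. Second, $A_k-1\to 0$ is allowed: for $\beta_k=\frac{1+\delta 2^{-k}}{\gamma-8\eta}$ with small $\delta>0$ one has $\sum_k(A_k-1)<\infty$, so summing the slack yields nothing. Your fallback does not close this as stated. It presupposes $\norm{x_{k+1}-x_k}\to 0$, which you have not shown. Moreover, at the critical limit point $\bar\beta=\frac{1}{\gamma-8\eta}$ the $\lambda=\frac12$ inequality passes to the limit as $r^2\le r^2$, which is no contradiction. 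This is precisely the tension described in Section \ref{subsubsection_4_closed}: with $\lambda=\frac12$ fixed, you cannot have both slack and a uniformly positive coefficient on the step term.

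The missing idea is to vary $\lambda$. The paper obtains the statement as a special case of Theorem \ref{THEOREM2}. It chooses $\lambda_k=\min\{N(\beta_k),\frac12+\frac{1}{\sqrt8}\}$, so that $c_k\le d_k$ and $e_k$ is uniformly positive (Lemma \ref{collection}). This yields small steps $d(x_k,x_{k+1})$ at an explicitly bounded index (Lemma \ref{new and better}). Then Lemma \ref{Lemma11P} together with \eqref{cauchy} and $\lambda=\frac18$ resp.\ $\frac38$ gives $d(x_{k+1},x^*)\le C\,d(x_{k+1},x_k)$. Fej\'er monotonicity finishes the argument and even produces a rate.

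Your compactness fallback can also be repaired directly by a case split on the limit points of $\{\beta_k\}_k$.
\begin{itemize}
\item If $\{\beta_k\}_k$ has a limit point $\bar\beta>\frac{1}{\gamma-8\eta}$, then $(\circ)_2$ gives $A_{k_j}\norm{x_{k_j+1}-x^*}^2\le\norm{x_{k_j}-x^*}^2$ along a subsequence with $A_{k_j}\to\bar A>1$, so $r=0$.
\item Otherwise $\beta_k\to\frac{1}{\gamma-8\eta}$. Then $1-4\eta\beta_k\to 1-\frac{4\eta}{\gamma-8\eta}>0$ by (A6), so $\norm{x_{k+1}-x_k}\to 0$. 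Now take $(\circ)_1$ with $\lambda=\frac38$ and multiply by $\frac{2\beta_k}{\lambda}$. Its left coefficient tends to $\frac{15\gamma-124\eta}{12(\gamma-8\eta)}$, which exceeds $1$ since $3\gamma>28\eta$. The $\norm{x_{k+1}-x_k}^2$ term, whose coefficient $1-\frac{16}{3}\eta\beta_k$ may be negative, vanishes in the limit. This again forces $r=0$.
\end{itemize}
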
	

Notice that under $(A6)$, it holds that $\lowerbound < \upperbound$ so that the interval $(\frac{1}{\gamma - 8\eta}, \frac{1}{4\eta})$ is non-empty and the parameter restriction $\{\beta_k\}_k \subseteq (\frac{1}{\gamma - 8\eta}, \frac{1}{4\eta})$ is satisfiable.\footnote{A slight weakening of assumption $(A6)$ is discussed in \cite[Remark 87]{GLM2025} (in the context of a variant of the above method including inertia terms and over-relaxations as developed in \cite{GLM2023,GradLaraMarca24}), with a corresponding refined parameter restriction on $\{\beta_k\}_k$. While we deem it quite likely that our present results can be extended to this weakened assumption, we here focus on $(A6)$ for simplicity.} In particular, as highlighted in the introduction, we will be able to weaken this restriction to $\{\beta_k\}_k\subseteq [\lowerbound,\upperbound]$ later on.

Given any function $h: X \rightarrow \R$, we define its proximal map $\mathrm{Prox}_{\beta h}$ by 
\begin{equation*}
\mathrm{Prox}_{\beta h}(x) := \argmin_{y\in X}\left\{h(y) + \frac{1}{2\beta}d^2(x,y)\right\}
\end{equation*}
for $\beta>0$ and $x \in X$. Using that definition, as already highlighted in the introduction, the method \eqref{PPA} assumes a much more compact form with
\[
x_{k+1} \in \mathrm{Prox}_{\beta_k f(x_k, \cdot)}(x_k).
\]

We now first show the well-definedness of the method \eqref{PPA} above, i.e.\ that $\mathrm{Prox}_{\beta_k f(x_k, \cdot)}(x_k)\neq\emptyset$ for any $k\in\mathbb{N}$, provided that $\beta_k \leq \upperbound$. The first crucial result in this direction is the following:

\begin{lemma}[{cf.\ \cite[Lemma 3.4]{Pischke2025}}] \label{Lemma6P}
If $h: X \rightarrow \R$ is bounded from below and 1-supercoercive, then any sequence $\{y_k\}_k$ with $h(y_k) \rightarrow \inf_{x\in X}h(x)$ is bounded.
\end{lemma}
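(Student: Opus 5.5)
We argue by contradiction, directly from the definition of 1-supercoercivity in its sequential form. Fix the base point $o\in X$ used in that definition and let $m:=\inf_{x\in X}h(x)$, which is finite because $h$ is bounded from below. Take $\{y_k\}_k$ with $h(y_k)\to m$, and suppose it is unbounded. Then some subsequence $\{y_{k_l}\}_l$ satisfies $d(o,y_{k_l})\to\infty$. We will show that $h$ must blow up along this subsequence, which is incompatible with $h(y_{k_l})\to m$.

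First, 1-supercoercivity gives a constant $c>0$ with
\[
\liminf_{d(o,x)\to\infty}\frac{h(x)}{d(o,x)}>c .
\]
In sequential form, for every sequence $\{x_l\}_l$ with $d(o,x_l)\to\infty$ we have $\liminf_{l\to\infty}h(x_l)/d(o,x_l)>c$. Applied to $\{y_{k_l}\}_l$, this yields an index $l_0$ such that
\[
h(y_{k_l})\geq c\, d(o,y_{k_l}) \quad \text{for all } l\geq l_0 .
\]
Second, since $d(o,y_{k_l})\to\infty$, the right-hand side tends to $+\infty$, so $h(y_{k_l})\to+\infty$. This contradicts $h(y_{k_l})\to m<\infty$. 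Hence $\{y_k\}_k$ is bounded.

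The argument does not depend on the choice of $o$: by the triangle inequality, $d(o,x)$ and $d(o',x)$ differ by at most $d(o,o')$, so the ratio in the definition has the same $\liminf$ for any base point. The only step needing care is converting the $\liminf$ condition into the eventual linear lower bound along the subsequence. Taking $c$ strictly below the $\liminf$ (which is positive, possibly $+\infty$) makes that step immediate. Boundedness from below is used only to ensure $m>-\infty$, so that $h(y_k)\to m$ is a statement about convergence to a finite value.
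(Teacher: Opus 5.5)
Your proof is correct. The subsequence with $d(o,y_{k_l})\to\infty$, together with the eventual bound $h(y_{k_l})\geq c\,d(o,y_{k_l})$ from the positive $\liminf$, forces $h(y_{k_l})\to+\infty$, which contradicts $h(y_k)\to\inf_{x\in X}h(x)$. The paper does not prove this statement itself but imports it from \cite[Lemma 3.4]{Pischke2025}, and your contradiction argument is the standard route for it (incidentally, it does not even need $h$ bounded below, since $h(y_{k_l})\to+\infty$ would equally contradict $h(y_k)\to-\infty$).
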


The above result can be employed to yield the following:

\begin{lemma}[{cf.\ \cite[Lemma 3.8]{Pischke2025}}]\label{Lemma10P}
Let $h: X \rightarrow \R$ be a quasiconvex, lsc function which is bounded from below. Then, for any $x \in X$ and $\beta>0$, $\mathrm{Prox}_{\beta h}(x)$ is nonempty.
\end{lemma}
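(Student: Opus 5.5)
We fix $x\in X$ and $\beta>0$ and set $\varphi(y):=h(y)+\frac{1}{2\beta}d^2(x,y)$. The plan is the direct method of the calculus of variations, with $\Delta$-convergence taking the place of compactness. Let $m:=\inf_{y\in X}h(y)>-\infty$. Then $\varphi\geq m$, so $\mu:=\inf_{y\in X}\varphi(y)$ is finite and we can choose a minimizing sequence $\{y_k\}_k$ with $\varphi(y_k)\to\mu$.

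\textbf{Step 1 (boundedness).} We have
\[
\frac{1}{2\beta}d^2(x,y_k)\leq \varphi(y_k)-m\leq \varphi(y_0)-m+1
\]
for all large $k$, so $\{y_k\}_k$ is bounded. One could instead argue that $\varphi$ is $1$-supercoercive and use Lemma \ref{Lemma6P}, but the estimate above is more elementary.

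\textbf{Step 2 (extraction).} By Lemma \ref{Lemma7P}, we pass to a subsequence $y_{k_l}\to^\Delta \bar y$ for some $\bar y\in X$.

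\textbf{Step 3 (lower semicontinuity).} We aim to show $\varphi(\bar y)\leq\liminf_l\varphi(y_{k_l})=\mu$, which gives $\bar y\in\mathrm{Prox}_{\beta h}(x)$. The obstacle is that $\varphi$ itself need not be quasiconvex: the sum of a quasiconvex function and a convex function is in general not quasiconvex. So Lemma \ref{Lemma9P} cannot be applied to $\varphi$ directly, and we treat the two summands separately.
\begin{itemize}
\item $h$ is quasiconvex and lsc, so it is $\Delta$-lsc by Lemma \ref{Lemma9P}.
\item $d^2(x,\cdot)$ is convex and continuous, hence in particular quasiconvex and lsc. Applying Lemma \ref{Lemma9P} again (or the standard fact that convex lsc functions are $\Delta$-lsc), it is also $\Delta$-lsc.
\end{itemize}

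One point needs care: Lemma \ref{Lemma9P} is stated for sequences $\Delta$-converging to $\bar y$, and we use it along the fixed subsequence $\{y_{k_l}\}_l$. Since both summands are $\Delta$-lsc along this same sequence, we get
\[
\varphi(\bar y)=h(\bar y)+\tfrac{1}{2\beta}d^2(x,\bar y)\leq \liminf_l h(y_{k_l})+\liminf_l \tfrac{1}{2\beta}d^2(x,y_{k_l})\leq \liminf_l\varphi(y_{k_l})=\mu.
\]
The last inequality uses the superadditivity of $\liminf$. Hence $\varphi(\bar y)=\mu$, so $\bar y$ is a minimizer and $\mathrm{Prox}_{\beta h}(x)\neq\emptyset$.

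The only genuinely delicate step is Step 3, namely avoiding the non-quasiconvexity of $\varphi$. The split into two separately $\Delta$-lsc summands handles it.
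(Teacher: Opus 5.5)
Your proof is correct and follows essentially the same direct-method argument the paper relies on: a minimizing sequence, a $\Delta$-cluster point via Lemma \ref{Lemma7P}, and $\Delta$-lower semicontinuity of $h$ and $d^2(x,\cdot)$ applied separately via Lemma \ref{Lemma9P}. The paper does not prove this lemma itself (it cites \cite{Pischke2025}), but its proof of the extension, Proposition \ref{Lemma10}, runs exactly this way. The only difference is that you get boundedness from the estimate $\frac{1}{2\beta}d^2(x,y_k)\le\varphi(y_k)-m$ rather than from $1$-supercoercivity and Lemma \ref{Lemma6P}, which is a harmless simplification.
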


This can now be extended to yield the following, guaranteeing an analogous result to Lemma \ref{Lemma10P} above for suitable bifunctions and parameters, provided $S(f)$ is nonempty.

\begin{proposition}\label{Lemma10}
Let $f: X \times X \rightarrow \R$ be a function which is quasiconvex and lsc in its right argument and further satisfies $\textrm{$(A5)$}$. Assume that there exist $\overline{x} \in X$ and $t \in \R$ with $f(\overline{x},y) \geq t$ for all $y \in X$. Then, for any $ z \in X$ and $\beta \leq \upperbound$, $\mathrm{Prox}_{\beta f(z,\cdot)}(z)$ is nonempty. 
\end{proposition}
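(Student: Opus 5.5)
Fix $z\in X$ and $\beta\leq\upperbound$, and set $h(y):=f(z,y)+\frac{1}{2\beta}d^2(z,y)$. We would like to apply Lemma \ref{Lemma10P}, but $h$ need not be quasiconvex, since a sum of a quasiconvex function and a convex one can fail to be quasiconvex. So we will instead rerun the argument behind Lemma \ref{Lemma10P} directly for $h$. The plan has three steps: show that $h$ is bounded below and coercive, deduce that every minimizing sequence is bounded, and then extract a minimizer via $\Delta$-compactness.

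\textbf{Step 1: lower bound.} Apply $(A5)$ with $x=\overline{x}$, $y=z$ and the variable point $y$:
\[
f(\overline{x},y)-f(\overline{x},z)-f(z,y)\leq \eta\bigl(d^2(\overline{x},z)+d^2(z,y)\bigr).
\]
This rearranges to
\[
f(z,y)\geq t-f(\overline{x},z)-\eta d^2(\overline{x},z)-\eta d^2(z,y).
\]
Hence
\[
h(y)\geq C+\Bigl(\frac{1}{2\beta}-\eta\Bigr)d^2(z,y)
\]
with the constant $C:=t-f(\overline{x},z)-\eta d^2(\overline{x},z)$. Since $\beta\leq\upperbound$ gives $\frac{1}{2\beta}\geq 2\eta$, the coefficient is at least $\eta>0$. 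So $h$ is bounded below, and in fact it grows quadratically in $d(z,y)$. In particular $h$ is 1-supercoercive, so by Lemma \ref{Lemma6P} (or directly from the quadratic bound) any minimizing sequence $\{y_k\}_k$ is bounded.

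\textbf{Step 2: $\Delta$-cluster point.} By Lemma \ref{Lemma7P}, a subsequence of the minimizing sequence $\Delta$-converges to some $y^*$. It remains to show $h(y^*)\leq\liminf h(y_{k})$, and this is the main obstacle. Lemma \ref{Lemma9P} gives $\Delta$-lsc of $f(z,\cdot)$ because that function is quasiconvex and lsc. The term $d^2(z,\cdot)$ is convex and continuous, hence also $\Delta$-lsc, by Lemma \ref{Lemma9P} or the standard fact. A sum of two $\Delta$-lsc functions is $\Delta$-lsc, since the liminf of a sum is at least the sum of the liminfs. Therefore $h(y^*)\leq\liminf_j h(y_{k_j})=\inf h$, so $y^*\in\mathrm{Prox}_{\beta f(z,\cdot)}(z)$.

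The delicate point is only that we cannot invoke Lemma \ref{Lemma10P} for $h$ as a whole. Splitting $h$ into its two summands avoids this, because each summand is individually $\Delta$-lsc. The hypothesis $\beta\leq\upperbound$ (indeed any $\beta<\frac{1}{2\eta}$ would suffice) enters only in Step 1 to make the quadratic coefficient positive.
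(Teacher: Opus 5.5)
Your proposal is correct and follows essentially the same route as the paper. Both arguments instantiate $(A5)$ at $\overline{x}$ to get $f(z,y)\geq -G-\eta\, d^2(z,y)$, hence $h\geq \eta\, d^2(z,\cdot)-G$ for $\beta\leq\upperbound$, and then use Lemma \ref{Lemma6P}, Lemma \ref{Lemma7P} and the $\Delta$-lower semicontinuity of each summand via Lemma \ref{Lemma9P}. (One small correction to your motivation: what actually prevents quoting Lemma \ref{Lemma10P} for $f(z,\cdot)$ is that $f(z,\cdot)$ is not known to be bounded below, not that the sum $h$ may fail to be quasiconvex.)
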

\begin{proof}
Fix $z \in X$ and let $\overline{x}$, $t$ be as above. Then it follows from $(A5)$ that
\begin{align*}
f(\overline{x}, y) - f(\overline{x}, z) - \eta(d^2(\overline{x},z)+d^2(z,y)) \leq f(z,y) \text{ for all } y \in X.
\end{align*}
As $f(\overline{x}, y) \geq t$, we can set $G := -t + f(\overline{x},z)+\eta d^2(\overline{x}, z)$, obtaining
\begin{equation}
-G -\eta d^2(z,y) \leq f(z,y) \text{ for all } y \in X.\tag{$+$}\label{keinBock}
\end{equation}
We define $h_z (y) := f(z,y) + \frac{1}{2\beta}d^2(z,y)$ and show that $h_z$ is bounded from below. For this, notice that (\ref{keinBock}) implies
\begin{align*}
h_z(y) = f(z,y) + \frac{1}{2\beta}d^2(z,y) \geq \left(\frac{1}{2\beta}-\eta\right)d^2(z,y) - G.		
\end{align*}
But now as $\beta \leq \upperbound$, we obtain that $\frac{1}{2\beta} - \eta \geq 2 \eta - \eta = \eta > 0$. Thus we have $h_z(y) \geq \eta \ d^2(z,y) - G \geq -G$, and so $h_z$ is bounded below. Take $\{y_k\}_k$ such that $h_z(y_k) \rightarrow \inf_{x\in X} h_z(x)$. As further 
\begin{align*}
\liminf_{d(z,y) \rightarrow \infty} \frac{h_z(y)}{d(z,y)}\geq \liminf_{d(z,y) \rightarrow \infty} \left(\frac{\eta d^2(z,y)}{d(z,y)} - \frac{G}{d(z,y)}\right) \geq \liminf_{d(z,y) \rightarrow \infty}\frac{\eta d^2(z,y)}{d(z,y)} > 0,
\end{align*}
Lemma \ref{Lemma6P} yields that $\{y_k\}_k$ is bounded. By Lemma \ref{Lemma7P}, it has a $\Delta$-cluster point, i.e.\ there exists a subsequence $\{y_{k_l}\}_l$ of $\{y_k\}_k$ with $y_{k_l} \rightarrow^\Delta y$ . As $f(z,\cdot)$ is quasiconvex and lsc, by Lemma \ref{Lemma9P}, $f(z,\cdot)$ is $\Delta$-lsc. Further, as \eqref{CN} implies that $y \mapsto\frac{1}{2\beta}d^2(z,y)$ is convex and as it is also lsc, Lemma \ref{Lemma9P} also yields that it is $\Delta$-lsc. Hence, $h_z$ is $\Delta$-lsc and 
\begin{align*}
h_z(y) \leq \liminf_{l \rightarrow \infty} h_z(y_{k_l}) = \inf_{x\in X} h_z(x).
\end{align*} 
Thus, $y$ is a minimizer of $h_z$ and hence $y \in \mathrm{Prox}_{\beta f(z,\cdot)}(z)$.
\end{proof}

With Proposition \ref{Lemma10}, it follows in particular that whenever $(A1)$, $(A4)$ and $(A5)$ hold and whenever $S(f) \neq \emptyset$ and $\beta_k  \leq \upperbound$ for all $k \in \N$, then also $\mathrm{Prox}_{\beta_k f(x_k, \cdot)}(x_k)\neq\emptyset$ for any $k \in \N$. In particular, \eqref{PPA} is well-defined under these conditions and the parameter restriction $\{\beta_k\}_k\subseteq [\lowerbound,\upperbound]$.

The most essential result for strongly quasiconvex functions and their proximal maps is the following inequality, which is an extension of \cite[Proposition 7]{Lara2022} to the metric context of Hadamard spaces.

\begin{lemma}[{cf.\ \cite[Lemma 3.9]{Pischke2025}}]\label{Lemma11P}
Let $h: X \rightarrow \R$ be a strongly quasiconvex function with modulus $\gamma >0$ and let $\beta > 0$ and $x\in X$. If $\overline{x}\in \mathrm{Prox}_{\beta h}(x)$, then
\[
h(\overline{x}) \leq \max\{h(y),h(\overline{x})\} + \frac{\lambda}{2}\Big(\frac{\lambda}{\beta}-\gamma + \lambda \gamma \Big )d^2(y,\overline{x}) + \frac{\lambda}{\beta}\langle \vv{x\overline{x}}, \vv{\overline{x}y} \rangle
\] 
for all $y \in X$ and $\lambda \in [0,1]$.
\end{lemma}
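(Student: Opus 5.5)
The plan is to compare the proximal objective at $\overline{x}$ with its value at points on the geodesic from $\overline{x}$ to $y$. Fix $y\in X$ and $\lambda\in[0,1]$, and set $y_\lambda:=(1-\lambda)\overline{x}\oplus\lambda y$. Since $\overline{x}$ minimizes $h(\cdot)+\frac{1}{2\beta}d^2(x,\cdot)$, we have
\[
h(\overline{x})+\frac{1}{2\beta}d^2(x,\overline{x})\leq h(y_\lambda)+\frac{1}{2\beta}d^2(x,y_\lambda).
\]
We bound the two terms on the right separately.

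For the first term, strong quasiconvexity applied to the geodesic from $\overline{x}$ to $y$ gives
\[
h(y_\lambda)\leq\max\{h(\overline{x}),h(y)\}-\lambda(1-\lambda)\tfrac{\gamma}{2}d^2(y,\overline{x}).
\]
For the second term, \eqref{CN} with base point $x$ gives
\[
d^2(x,y_\lambda)\leq(1-\lambda)d^2(x,\overline{x})+\lambda d^2(x,y)-\lambda(1-\lambda)d^2(\overline{x},y).
\]

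Substituting both bounds, the inequality becomes
\[
h(\overline{x})\leq\max\{h(y),h(\overline{x})\}-\lambda(1-\lambda)\tfrac{\gamma}{2}d^2(y,\overline{x})+\frac{\lambda}{2\beta}\bigl(d^2(x,y)-d^2(x,\overline{x})-(1-\lambda)d^2(\overline{x},y)\bigr).
\]
The remaining step is to express the bracket through the quasi-inner product. By definition,
\[
\langle\vv{x\overline{x}},\vv{\overline{x}y}\rangle=\tfrac12\bigl(d^2(x,y)+d^2(\overline{x},\overline{x})-d^2(x,\overline{x})-d^2(\overline{x},y)\bigr),
\]
so $d^2(x,y)-d^2(x,\overline{x})=2\langle\vv{x\overline{x}},\vv{\overline{x}y}\rangle+d^2(\overline{x},y)$. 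Hence the bracket equals $2\langle\vv{x\overline{x}},\vv{\overline{x}y}\rangle+\lambda d^2(\overline{x},y)$.

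Collecting terms, the coefficient of $d^2(y,\overline{x})$ is
\[
\frac{\lambda^2}{2\beta}-\frac{\lambda(1-\lambda)\gamma}{2}=\frac{\lambda}{2}\Bigl(\frac{\lambda}{\beta}-\gamma+\lambda\gamma\Bigr),
\]
and the inner-product term is $\frac{\lambda}{\beta}\langle\vv{x\overline{x}},\vv{\overline{x}y}\rangle$. This is exactly the claimed inequality. The argument is purely algebraic, and the only point needing care is the sign convention in the quasi-inner product identity.
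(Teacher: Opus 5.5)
Your proof is correct and is essentially the argument of the cited source (\cite[Lemma 3.9]{Pischke2025}, which adapts Lara's Euclidean proof): compare the proximal objective at $\overline{x}$ with its value at $(1-\lambda)\overline{x}\oplus\lambda y$, bound $h$ there by strong quasiconvexity and $d^2(x,\cdot)$ by \eqref{CN}, then rewrite using the quasi-inner product. Your algebra checks out, including the sign convention for $\langle\vv{x\overline{x}},\vv{\overline{x}y}\rangle$ and the trivial case $\lambda=0$.
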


Akin to \cite{IusemLara2021}, we can also show that fixed points of this mixed proximal point map are exactly the solutions of the equilibrium problem. 

\begin{proposition}\label{fixpoint_problem}
Let $f: X\times X \rightarrow \R$ be a bifunction satisfying $(A0)$ and $(A4)$. Then
\[
S(f) = \mathrm{Fix}\left(\argmin_{y\in X}\left\{f(\cdot,y) + \frac{1}{2\beta}d^2(\cdot,y)\right\}\right)
\]
for any $\beta > 0$.
\end{proposition}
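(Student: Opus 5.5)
We prove the two inclusions separately. Fix $\beta>0$. The fixed point set in the statement consists of those $x\in X$ with $x\in\argmin_{y\in X}\{f(x,y)+\frac{1}{2\beta}d^2(x,y)\}$, that is $x\in\mathrm{Prox}_{\beta f(x,\cdot)}(x)$.

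\emph{Solutions are fixed points.} Let $x^*\in S(f)$. By $(A0)$ the objective $y\mapsto f(x^*,y)+\frac{1}{2\beta}d^2(x^*,y)$ takes the value $0$ at $y=x^*$. For every other $y$ we have $f(x^*,y)\geq 0$ and $d^2(x^*,y)\geq 0$, so the objective is nonnegative everywhere. Hence $x^*$ is a minimizer, i.e.\ a fixed point. This direction needs only $(A0)$.

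\emph{Fixed points are solutions.} Let $x\in\mathrm{Prox}_{\beta f(x,\cdot)}(x)$. We apply Lemma \ref{Lemma11P} to the strongly quasiconvex function $h:=f(x,\cdot)$ (using $(A4)$), with the base point equal to $x$ and the proximal point $\overline{x}=x$. The quasi-inner product term $\frac{\lambda}{\beta}\langle\vv{xx},\vv{xy}\rangle$ vanishes, since $\langle\vv{xx},\vv{uv}\rangle=0$ directly from the defining formula. By $(A0)$ we have $h(x)=0$. So for all $y\in X$ and $\lambda\in[0,1]$,
\[
0\leq \max\{f(x,y),0\}+\frac{\lambda}{2}\Big(\frac{\lambda}{\beta}-\gamma+\lambda\gamma\Big)d^2(y,x).
\]

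Now suppose, for contradiction, that $f(x,y)<0$ for some $y$; then $y\neq x$ by $(A0)$. The maximum equals $0$, so the inequality reads $0\leq \frac{\lambda}{2}(\frac{\lambda}{\beta}-\gamma+\lambda\gamma)d^2(y,x)$. Choose $\lambda\in(0,1)$ small enough that $\lambda(\frac{1}{\beta}+\gamma)<\gamma$. Then the bracket is negative and $d^2(y,x)>0$, so the right-hand side is strictly negative, a contradiction. Therefore $f(x,y)\geq 0$ for all $y$, i.e.\ $x\in S(f)$.

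The only delicate point is that the max in Lemma \ref{Lemma11P} could absorb the information. This is handled by assuming $f(x,y)<0$, so that the max collapses to $h(\overline{x})=0$, and then taking $\lambda$ small so that the negative $-\gamma$ term dominates.
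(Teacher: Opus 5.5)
Your proof is correct and follows the same route as the paper, which reduces the claim to the identity $\mathrm{Fix}(\mathrm{Prox}_{\beta h})=\argmin h$ for the strongly quasiconvex function $h=f(x,\cdot)$, combined with $(A0)$. The only difference is that the paper cites \cite[Proposition 3.10]{Pischke2025} for this identity, whereas you verify it inline: the inclusion $S(f)\subseteq\mathrm{Fix}$ follows directly from nonnegativity of the objective (and, as you correctly note, needs only $(A0)$), and the reverse inclusion follows via Lemma \ref{Lemma11P} with $\overline{x}=x$ and $0<\lambda<\gamma\beta/(1+\gamma\beta)$.
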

\begin{proof}
Let $\overline{x}$ be a fixed point of $\mathrm{argmin}_{y\in X}\{f(\cdot,y) + \frac{1}{2\beta}d^2(\cdot,y)\}$, i.e.\ let $\overline{x} \in \mathrm{Prox}_{\beta f(\overline{x},\cdot)}(\overline{x})$. By $(A4)$, we get that $x \mapsto f(\overline{x},x)$ is strongly quasiconvex with modulus $\gamma$. Thus, invoking \cite[Proposition 3.10]{Pischke2025}, we derive that $\overline{x} \in \mathrm{argmin}_{y \in X}f(\overline{x},y)$. But then $\overline{x}$ is an equilibrium point for $f$, as $f(\overline{x},y) \geq f(\overline{x},\overline{x})$ for all $y \in X$ and  $f(\overline{x},\overline{x}) = 0$ by $(A0)$.

For the other direction, suppose that $\overline{x} \in S(f)$. Then $\overline{x} \in \mathrm{argmin}_{y \in X}f(\overline{x},y)$ as $f(\overline{x},y) \geq 0$ for all $y \in X$ and $f(\overline{x},\overline{x}) = 0$ by $(A0)$. But then again, by \cite[Proposition 3.10]{Pischke2025}, we get that $\overline{x} \in \mathrm{Prox}_{\beta f(\overline{x}, \cdot)}(\overline{x})$.
\end{proof}

In particular, also similar to \cite{IusemLara2021}, we want to remark that this provides a convenient stopping criterion as whenever $x_{k+1} = x_k$, one already knows that $\{x_k\} = S(f)$. 

The remainder of this section will now be concerned with establishing a variant of Theorem \ref{ILconv} in the context of Hadamard spaces, and in particular to endow it with effective information.

\subsection{Quantitative convergence results}\label{subsubsection_4_closed}
 
We begin by showing a result that will yield Fej\'er monotonicity of $\{x_k\}_k$ with respect to $S(f)$. For this, we adapt \cite[Proposition 3.4]{IusemLara2021} and its proof to the nonlinear setting. In particular, compared to this result, we highlight the more general inequalities \ref{fejer_easy} and \ref{fejer_less_easy_my_way} below. Only their instantiations with $\lambda = 1/2$ are considered explicitly in \cite[Proposition 3.4]{IusemLara2021}, but we will later require varying $\lambda$'s.

\begin{lemma}\label{fejer}
Let $f$ be a bifunction satisfying $(A2)$, $(A4)$ and $(A5)$. Let $\{\beta_k\}_k\subseteq (0,\infty)$. Suppose that \eqref{PPA} is well-defined and let $\{x_k\}_k$ be its generated sequence. Let $x^{*} \in S(f)$. Then, for every $k \in \N$, at least one of the following inequalities holds:
\begin{equation}
\frac{\lambda(1-\lambda)(1+\gamma\beta_k)}{2\beta_k}d^2(x_{k+1},x^{*}) \leq  \frac{\lambda}{2\beta_k}d^2(x_k,x^*) - \frac{\lambda}{2\beta_k}d^2(x_{k+1},x_k)\tag*{$(*)_1$}\label{fejer_easy}
\end{equation}
for all $\lambda \in [0,1]$, or 
\begin{equation}
\Big(\frac{\lambda(1-\lambda)(1+\gamma\beta_k)}{2\beta_k}-\eta\Big)d^2(x_{k+1},x^*) \leq \frac{\lambda}{2\beta_k}d^2(x_k,x^*) - \Big(\frac{\lambda}{2\beta_k}-\eta\Big)d^2(x_{k+1},x_k)\tag*{$(*)_2$}\label{fejer_less_easy_my_way}
\end{equation}
for all $\lambda \in [0,1]$.
\end{lemma}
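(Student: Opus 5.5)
The plan is to apply Lemma \ref{Lemma11P} to $h:=f(x_k,\cdot)$, which is strongly quasiconvex with modulus $\gamma$ by $(A4)$. We take $\beta:=\beta_k$, the base point $x:=x_k$, the proximal point $\overline{x}:=x_{k+1}\in\mathrm{Prox}_{\beta_k f(x_k,\cdot)}(x_k)$ and the test point $y:=x^*$. For every $\lambda\in[0,1]$ this gives
\[
f(x_k,x_{k+1}) \leq \max\{f(x_k,x^*),f(x_k,x_{k+1})\} + \frac{\lambda}{2}\Big(\frac{\lambda}{\beta_k}-\gamma+\lambda\gamma\Big)d^2(x^*,x_{k+1}) + \frac{\lambda}{\beta_k}\svv{x_k}{x_{k+1}}{x_{k+1}}{x^*}.
\]
Next we expand the quasi-inner product by its definition:
\[
\svv{x_k}{x_{k+1}}{x_{k+1}}{x^*}=\tfrac12\big(d^2(x_k,x^*)-d^2(x_k,x_{k+1})-d^2(x_{k+1},x^*)\big).
\]
We then collect the coefficient of $d^2(x_{k+1},x^*)$. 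It equals $\frac{\lambda^2}{2\beta_k}-\frac{\lambda\gamma}{2}+\frac{\lambda^2\gamma}{2}-\frac{\lambda}{2\beta_k} = -\frac{\lambda(1-\lambda)(1+\gamma\beta_k)}{2\beta_k}$, which is exactly the coefficient appearing on the left of both target inequalities.

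The case distinction is made once for each $k$, independently of $\lambda$. This matters because the statement demands that one of the two inequalities hold for \emph{all} $\lambda$ at once, and the maximum in the display above does not depend on $\lambda$.

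\textbf{Case $f(x_k,x^*)\leq f(x_k,x_{k+1})$.} The maximum is $f(x_k,x_{k+1})$, which cancels against the left-hand side, leaving $0\leq$ (quadratic terms). Rearranging yields \ref{fejer_easy} for all $\lambda\in[0,1]$.

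\textbf{Case $f(x_k,x^*)> f(x_k,x_{k+1})$.} Moving terms across gives
\[
0< f(x_k,x^*)-f(x_k,x_{k+1})\leq (\text{quadratic terms}).
\]
We bound the left-hand side using $(A5)$ with $(x,y,z)=(x_k,x_{k+1},x^*)$:
\[
f(x_k,x^*)-f(x_k,x_{k+1})\leq f(x_{k+1},x^*)+\eta\big(d^2(x_k,x_{k+1})+d^2(x_{k+1},x^*)\big).
\]
Since $x^*\in S(f)$, we have $f(x^*,x_{k+1})\geq 0$, so pseudomonotonicity $(A2)$ gives $f(x_{k+1},x^*)\leq 0$. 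Hence the left-hand side is at most $\eta(d^2(x_k,x_{k+1})+d^2(x_{k+1},x^*))$. The inequality then reads $0\leq \eta(d^2(x_k,x_{k+1})+d^2(x_{k+1},x^*)) + (\text{quadratic terms})$, which is what we need, since moving the $\eta$-terms to the other side produces the $-\eta$ shifts in both coefficients. Rearranging gives \ref{fejer_less_easy_my_way} for all $\lambda$.

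The main obstacle is the second case: choosing the right instantiation of $(A5)$ so that the unknown bifunction values cancel. The ordering $(x_k,x_{k+1},x^*)$ is what makes the remaining term $f(x_{k+1},x^*)$ sign-controlled via pseudomonotonicity. Everything else is bookkeeping of the coefficients.
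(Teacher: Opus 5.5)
Your proposal is essentially the paper's proof: Lemma \ref{Lemma11P} applied to $f(x_k,\cdot)$ with $y=x^*$, a case split on the maximum made once per $k$ (independently of $\lambda$), and in the second case $(A5)$ at $(x_k,x_{k+1},x^*)$ together with $f(x_{k+1},x^*)\leq 0$ from $(A2)$.

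There is one sign slip to fix. Write $Q$ for your quadratic terms. In the second case, Lemma \ref{Lemma11P} gives $0\leq Q+\big(f(x_k,x^*)-f(x_k,x_{k+1})\big)$, not $0<f(x_k,x^*)-f(x_k,x_{k+1})\leq Q$; the latter would already give $(*)_1$ and make $(A5)$ superfluous. Only in the corrected form does your upper bound $f(x_k,x^*)-f(x_k,x_{k+1})\leq \eta\big(d^2(x_k,x_{k+1})+d^2(x_{k+1},x^*)\big)$ produce the inequality $0\leq \eta\big(d^2(x_k,x_{k+1})+d^2(x_{k+1},x^*)\big)+Q$, which you then correctly rearrange into $(*)_2$.
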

\begin{proof}
Fix $k\in \N$. Using that $x_{k+1} \in \mathrm{Prox}_{\beta_k f(x_k,\cdot)}(x_k)$, applying Lemma \ref{Lemma11P} to the function $f(x_k, \cdot)$ with $y = x^{*}$ yields
\begin{align*}
&f(x_k, x_{k+1}) - \mathrm{max}\{f(x_k, x_{k+1}), f(x_k,x^*)\}\\
&\qquad\leq \frac{\lambda}{2}\Big(\frac{\lambda}{\beta_k} - \gamma + \lambda\gamma\Big)d^2(x_{k+1},x^*) + \frac{\lambda}{\beta_k} \Big\langle \vv{x_kx_{k+1}},\vv{x_{k+1}x^*}\Big\rangle
\end{align*}
for all $\lambda \in [0,1]$. We consider two cases:

\textbf{Case 1:} $f(x_k,x_{k+1}) \geq f(x_k, x^*)$. Then we have
\[
0 \leq \frac{\lambda}{2}\left(\frac{\lambda}{\beta_k} - \gamma + \lambda\gamma\right)d^2(x_{k+1},x^*) + \frac{\lambda}{\beta_k}\svv{x_k}{x_{k+1}}{x_{k+1}}{x^*}
\]
for all $\lambda \in [0,1]$. Using the definition of the quasi-inner product, we obtain
\[
0 \leq  \frac{\lambda}{2}\left(\frac{\lambda}{\beta_k} - \gamma + \lambda\gamma\right)d^2(x_{k+1},x^*) + \frac{\lambda}{2\beta_k} \left(d^2(x_k, x^{*})-d^2(x_{k+1},x_k)-d^2(x_{k+1},x^{*})\right)
\]
and thus 
\[
\frac{\lambda(1-\lambda)(1+\gamma\beta_k)}{2\beta_k}d^2(x_{k+1},x^{*}) \leq  \frac{\lambda}{2\beta_k}d^2(x_k,x^*) - \frac{\lambda}{2\beta_k}d^2(x_{k+1},x_k)
\]
for all $\lambda \in [0,1].$ This is \ref{fejer_easy}.

\textbf{Case 2:} $f(x_k,x_{k+1}) < f(x_k, x^*)$. Then
\begin{align*}
0 &\leq  \frac{\lambda}{2}\left(\frac{\lambda}{\beta_k} - \gamma + \lambda\gamma\right)d^2(x_{k+1},x^*) + \frac{\lambda}{\beta_k}\svv{x_k}{x_{k+1}}{x_{k+1}}{x^*} + f(x_k,x^*)- f(x_k, x_{k+1}) \\
&\leq  \frac{\lambda}{2}\left(\frac{\lambda}{\beta_k} - \gamma + \lambda\gamma\right)d^2(x_{k+1},x^*) + \frac{\lambda}{\beta_k}\svv{x_k}{x_{k+1}}{x_{k+1}}{x^*} + f(x_{k+1},x^*) \\
&\hphantom{\leq \mbox{}}+ \eta\Big(d^2(x_{k+1},x_k) + d^2(x_{k+1},x^*)\Big)
\end{align*}
for all $\lambda \in [0,1]$, where the second inequality follows from the fact that we get $f(x_k,x^{*})-f(x_k,x_{k+1})-f(x_{k+1},x^{*}) \leq \eta\big(d^2(x_{k+1},x_k) + d^2(x_{k+1},x^*)\big) $ with $(A5)$. Further, as $x^*$ is an equilibrium point and $f$ is  pseudomonotone, we get $f(x_{k+1},x^*) \leq 0$ and thus
\begin{align*}
0 &\leq  \frac{\lambda}{2}\left(\frac{\lambda}{\beta_k} - \gamma + \lambda\gamma\right)d^2(x_{k+1},x^*) + \frac{\lambda}{\beta_k}\svv{x_k}{x_{k+1}}{x_{k+1}}{x^*} \\
&\hphantom{\leq\mbox{}}+ \eta\Big(d^2(x_{k+1},x_k) + d^2(x_{k+1},x^*)\Big)
\end{align*}
for all $\lambda \in [0,1]$. Using the definition of the quasi-inner product, similar to Case 1, we obtain
\[
\Big(\frac{\lambda(1-\lambda)(1+\gamma\beta_k)}{2\beta_k}-\eta\Big)d^2(x_{k+1},x^*) \leq \frac{\lambda}{2\beta_k}d^2(x_k,x^*) - \Big(\frac{\lambda}{2\beta_k}-\eta\Big)d^2(x_{k+1},x_k)
\]
for all $\lambda \in [0,1]$. This is \ref{fejer_less_easy_my_way}.
\end{proof}

Using Lemma \ref{fejer}, we obtain the following:

\begin{lemma}\label{fejer_uni}
Let $f$ be a bifunction satisfying $(A2)$, $(A4)$ and $(A5)$. Let $\{\beta_k\}_k\subseteq (0,\infty)$. Suppose that \eqref{PPA} is well-defined and let $\{x_k\}_k$ be its generated sequence. Let $x^{*} \in S(f)$. Then, for every $k \in \N$ and $\lambda \in [0,1]$:
\begin{equation}
\left(\frac{\lambda(1-\lambda)(1+\gamma\beta_k)}{2\beta_k}-\eta\right)d^2(x_{k+1},x^*) \leq \frac{\lambda}{2\beta_k}d^2(x_k,x^*) - \left(\frac{\lambda}{2\beta_k}-\eta\right)d^2(x_{k+1},x_k).\tag*{$(\circ)_1$}\label{fejer_less_easy_my_way_uni}
\end{equation}
In particular (setting $\lambda = \frac{1}{2}$), we have
\begin{equation}
\left(\frac{1+\gamma\beta_k}{8\beta_k}-\eta\right)d^2(x_{k+1},x^*) \leq \frac{1}{4\beta_k}d^2(x_k,x^*) - \left(\frac{1}{4\beta_k}-\eta\right)d^2(x_{k+1},x_k).\tag*{$(\circ)_2$}\label{fejer_less_easy_uni}
\end{equation}
\end{lemma}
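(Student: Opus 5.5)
By Lemma \ref{fejer}, for each fixed $k$ one of the two alternatives holds, either \ref{fejer_easy} for all $\lambda\in[0,1]$ or \ref{fejer_less_easy_my_way} for all $\lambda\in[0,1]$. The plan is to show that \ref{fejer_easy} always implies \ref{fejer_less_easy_my_way_uni}. Then \ref{fejer_less_easy_my_way_uni} holds in both cases. In the second case it is literally \ref{fejer_less_easy_my_way}, so nothing needs to be done there.

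So assume \ref{fejer_easy} holds for the given $k$ and $\lambda$. The idea is to weaken both sides in the direction allowed by the sign of $\eta>0$. On the left-hand side we subtract the nonnegative quantity $\eta\, d^2(x_{k+1},x^*)$, which gives
\[
\Big(\frac{\lambda(1-\lambda)(1+\gamma\beta_k)}{2\beta_k}-\eta\Big)d^2(x_{k+1},x^*)\leq \frac{\lambda(1-\lambda)(1+\gamma\beta_k)}{2\beta_k}d^2(x_{k+1},x^*).
\]
On the right-hand side we add the nonnegative quantity $\eta\, d^2(x_{k+1},x_k)$, so that
\[
\frac{\lambda}{2\beta_k}d^2(x_k,x^*)-\frac{\lambda}{2\beta_k}d^2(x_{k+1},x_k)\leq \frac{\lambda}{2\beta_k}d^2(x_k,x^*)-\Big(\frac{\lambda}{2\beta_k}-\eta\Big)d^2(x_{k+1},x_k).
\]
Chaining these two estimates with \ref{fejer_easy} yields \ref{fejer_less_easy_my_way_uni}.

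The special case \ref{fejer_less_easy_uni} follows by substituting $\lambda=\tfrac12$. Then $\lambda(1-\lambda)=\tfrac14$, so $\frac{\lambda(1-\lambda)(1+\gamma\beta_k)}{2\beta_k}=\frac{1+\gamma\beta_k}{8\beta_k}$, and $\frac{\lambda}{2\beta_k}=\frac{1}{4\beta_k}$.

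The argument has no real obstacle. The only point requiring care is the order of quantifiers: the case distinction in Lemma \ref{fejer} is made per $k$, and within each case the inequality holds for all $\lambda$. Hence the combined inequality holds for every $k$ and every $\lambda\in[0,1]$, as the statement requires.
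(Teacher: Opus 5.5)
Your proposal is correct and follows essentially the same route as the paper: invoke Lemma~\ref{fejer} and observe that \ref{fejer_easy} implies \ref{fejer_less_easy_my_way}. You make explicit the nonnegativity of $\eta\,d^2(x_{k+1},x^*)$ and $\eta\,d^2(x_{k+1},x_k)$, which the paper's coefficient comparison leaves implicit.
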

\begin{proof}
With Lemma \ref{fejer}, we obtain that for all $k \in \N$, inequality \ref{fejer_easy} or inequality \ref{fejer_less_easy_my_way} holds. The result then follows from the fact that \ref{fejer_easy} actually implies \ref{fejer_less_easy_my_way} as 
\[
\left(\frac{\lambda(1-\lambda)(1+\gamma\beta_k)}{2\beta_k}-\eta\right) < \left(\frac{\lambda(1-\lambda)(1+\gamma\beta_k)}{2\beta_k}\right)  \text{ and } \left(\frac{\lambda}{2\beta_k}-\eta\right) < \left(\frac{\lambda}{2\beta_k}\right)
\]
for all $\lambda \in [0,1]$ and $k \in \N$.
\end{proof}

From Lemma \ref{fejer_uni}, we now immediately obtain the Fej\'er monotonicity of $\{x_k\}_k$ w.r.t.\ $S(f)$. Over Euclidean spaces, this property is established in \cite[Proposition 3.5]{IusemLara2021} under the condition $\{\beta_k\}_k \subseteq (\lowerbound, \upperbound)$. However it is easy to see that the same holds true already for $\{\beta_k\}_k \subseteq [\lowerbound, \upperbound]$.

\begin{lemma}\label{fejeractually}
Let $f$ be a bifunction satisfying $(Ai)$ with $i=2,4,5,6$. Let $\{\beta_k\}_k \subseteq [\lowerbound,\upperbound]$. Suppose that \eqref{PPA} is well-defined and let $\{x_k\}_k$ be its generated sequence. Let $x^*\in S(f)$. Then, for every $k \in \N$:
\[
d^2(x_{k+1}, x^{*}) \leq d^2(x_k,x^{*}).
\]
\end{lemma}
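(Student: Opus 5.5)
We derive this from inequality \ref{fejer_less_easy_uni} of Lemma \ref{fejer_uni}, which holds for every $k$ under $(A2)$, $(A4)$ and $(A5)$. The plan is to check that, for $\beta_k\in[\lowerbound,\upperbound]$, the coefficient on the left dominates the coefficient $\frac{1}{4\beta_k}$ of $d^2(x_k,x^*)$ on the right, and that the term involving $d^2(x_{k+1},x_k)$ can be dropped.

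\emph{Dropping the last term.} Since $\beta_k\leq\upperbound$, we have $\frac{1}{4\beta_k}\geq\eta$. Hence $\left(\frac{1}{4\beta_k}-\eta\right)d^2(x_{k+1},x_k)\geq 0$, and discarding it yields
\[
\left(\frac{1+\gamma\beta_k}{8\beta_k}-\eta\right)d^2(x_{k+1},x^*)\leq \frac{1}{4\beta_k}d^2(x_k,x^*).
\]

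\emph{Comparing the coefficients.} We want $\frac{1+\gamma\beta_k}{8\beta_k}-\eta\geq\frac{1}{4\beta_k}$. Multiplying by $8\beta_k>0$, this is equivalent to $1+\gamma\beta_k-8\eta\beta_k\geq 2$, that is, $\beta_k(\gamma-8\eta)\geq 1$. By $(A6)$ we have $\gamma-8\eta>4\eta>0$, so this is exactly $\beta_k\geq\lowerbound$, which holds by assumption. Equality is allowed here, which is precisely why the closed interval suffices. Therefore
\[
\frac{1}{4\beta_k}d^2(x_{k+1},x^*)\leq\left(\frac{1+\gamma\beta_k}{8\beta_k}-\eta\right)d^2(x_{k+1},x^*)\leq\frac{1}{4\beta_k}d^2(x_k,x^*).
\]
Multiplying by $4\beta_k>0$ gives $d^2(x_{k+1},x^*)\leq d^2(x_k,x^*)$.

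There is no real obstacle in this argument. The only points needing care are the signs of the two coefficients, which rely on $(A6)$ (guaranteeing $\gamma-8\eta>0$ and that the parameter interval is nonempty), and the observation that both endpoint cases are handled by non-strict inequalities. In particular, at $\beta_k=\upperbound$ the dropped term vanishes, and at $\beta_k=\lowerbound$ the two coefficients coincide.
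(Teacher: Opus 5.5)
Your proposal is correct and follows essentially the same route as the paper: invoke inequality \ref{fejer_less_easy_uni} from Lemma \ref{fejer_uni}, drop the $d^2(x_{k+1},x_k)$ term using $\beta_k\leq\upperbound$, bound the left-hand coefficient below by $\frac{1}{4\beta_k}$ using $\beta_k\geq\lowerbound$ (the paper phrases this as $\eta\leq\frac{\gamma\beta_k-1}{8\beta_k}$), and multiply by $4\beta_k$.
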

\begin{proof}
Let $x^{*} \in S(f)$ and fix $k \in \N$. Using Lemma \ref{fejer_uni}, we know that \ref{fejer_less_easy_uni} holds. Using $\beta_k \leq \frac{1}{4 \eta},$ we have
\[
\Big(\frac{1+\gamma\beta_k}{8 \beta_k}-\eta\Big)d^2(x_{k+1}, x^{*}) \leq \frac{1}{4 \beta_k}d^2(x_k,x^{*}) - \Big(\frac{1}{4\beta_k}-\eta\Big)d^2(x_{k+1},x_k) \leq \frac{1}{4 \beta_k}d^2(x_k,x^{*}).
\]
With $\beta_k \geq \frac{1}{\gamma-8\eta}$, it follows that
\[
\Big(\frac{1+\gamma\beta_k}{8 \beta_k} - \eta\Big) d^2(x_{k+1},x^{*}) \geq \Big(\frac{1+\gamma\beta_k}{8 \beta_k} - \frac{\gamma\beta_k-1}{8 \beta_k}\Big) d^2(x_{k+1},x^{*})= \frac{1}{4\beta_k}d^2(x_{k+1},x^{*}).
\]
Multiplying with $4\beta_k$ yields the claim.
\end{proof}

Key for our approach towards a quantitative convergence theorem for \eqref{PPA} is an effective estimate for the asymptotic behavior of $d(x_k, x_{k+1})$. This will here take the form of an explicit bound $\alpha: (0, \infty) \rightarrow \N$ such that for any $\varepsilon>0$, there exists a $k \leq \alpha(\varepsilon)$ with $d(x_k, x_{k+1}) < \varepsilon$. Essential for deriving such an $\alpha$ is \ref{fejer_less_easy_my_way_uni} from Lemma \ref{fejer_uni}, by which we obtain an inequality of the form
\[
e_k d^2(x_{k+1}, x_k) \leq c_k d^2(x_k, x^{*}) - d_k d^2(x_{k+1}, x^{*})
\]
where
\[
e_k := \frac{\lambda_k}{2\beta_k}-\eta, \qquad c_k := \frac{\lambda_k}{2\beta_k}, \qquad d_k := \frac{\lambda_k(1-\lambda_k)(1+\gamma\beta_k)}{2\beta_k}-\eta
\]
for any choice of $\lambda_k \in \big[0,1\big]$. In order to derive an effective asymptotic estimate on $d(x_k, x_{k+1})$ from this we, on the one hand, need to argue that $c_k d^2(x_k, x^{*}) - d_k d^2(x_{k+1}, x^{*})$ gets arbitrarily small. Intuitively, we thus, e.g., want that $c_k \leq d_k$ and if $\{\beta_k\}_k \subseteq (\lowerbound,\upperbound)$, the \textit{only} constant choice of $\lambda_k$ that ensures this is $\lambda_k = \frac{1}{2}$ (as in \cite{IusemLara2021}). However, to derive an estimate on $d^2(x_{k+1}, x_k)$ from this, we further, on the other hand, need that $e_k$ can be bounded away from zero for all $k \in \N$, i.e.\ $(\frac{\lambda}{2\beta_k}-\eta) > a$ for some $a > 0$ and all $k \in \N$.  However, having set $\lambda_k = \frac{1}{2}$, this is impossible as $\big(\frac{1}{4 \beta_k}-\eta\big)$ goes to $0$ if for example $\beta_k \rightarrow \upperbound$, as is permitted in the condition $\{\beta_k\}_k \subseteq (\lowerbound,\upperbound)$.

We will overcome this issue by showing that if $\lambda_k$ is chosen adaptively with $k$ (or rather, the values of $\beta_k$), we can ensure both properties, that is both $c_k - d_k \leq 0$ and the existence of a uniform positive lower bound on $e_k$, at the same time. This not only enables our quantitative results, but also allows us to strengthen the original convergence result of Iusem and Lara (recall Theorem \ref{ILconv} above) to the assumption that $\beta_k \in [\lowerbound, \upperbound ]$ for all $k \in \N$.\footnote{We further refer to the extensive discussions in the master thesis of the first author \cite{Despres2026} for why this extension is also crucial from the logical perspective of the proof mining program, through which the present results have been obtained, to allow for the extraction of rates as in Theorem \ref{THEOREM2} later on.}
	
Before we give these quantitative results in the following part of this section, we need to develop some further machinery. Once this is established, the quantitative results for the asymptotic behavior of $d(x_k, x_{k+1})$ will follow from the following abstract result derived from \cite[Lemma 4.2]{Pischke2025}:

\begin{lemma}\label{easy_folklore} Let $\{a_k\}_k$ be a nonincreasing sequence of nonnegative reals and let $b\geq a_0$. Let further $\{c_k\}_k$ and $\{d_k\}_k$ be  sequences of nonnegative reals with $c_k \leq d_k$ for all $k \in \N$ and such that there exists $u \in \R$ with $c_k \leq u $  for all $k \in \N$.  Then, for any $\varepsilon > 0$:
\[
\exists k \leq \Bigg \lceil \frac{bu}{\varepsilon} \Bigg \rceil \left(c_ka_k - d_ka_{k+1} < \varepsilon\right).
\]
\end{lemma}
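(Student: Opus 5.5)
The plan is a contradiction argument combined with a telescoping estimate. Set $N := \lceil bu/\varepsilon\rceil$ and suppose, towards a contradiction, that
\[
c_k a_k - d_k a_{k+1} \geq \varepsilon \quad\text{for all } k \leq N.
\]
The goal is to sum these $N+1$ inequalities and show the left-hand side stays below $bu$, while the right-hand side is $(N+1)\varepsilon > bu$.

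The first step is to replace $d_k$ by $c_k$. Since $c_k \leq d_k$ and $a_{k+1}\geq 0$, we have $d_k a_{k+1} \geq c_k a_{k+1}$. Hence
\[
\varepsilon \leq c_k a_k - d_k a_{k+1} \leq c_k (a_k - a_{k+1}).
\]
Because $\{a_k\}_k$ is nonincreasing, $a_k - a_{k+1} \geq 0$. Together with $c_k \leq u$ this gives
\[
\varepsilon \leq u\,(a_k - a_{k+1}).
\]
If $u \le 0$, this already contradicts $\varepsilon > 0$. So we may assume $u>0$, and then $a_k - a_{k+1} \geq \varepsilon/u$ for all $k\leq N$.

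The second step is to telescope over $k=0,\dots,N$:
\[
a_0 - a_{N+1} = \sum_{k=0}^{N}(a_k - a_{k+1}) \geq (N+1)\frac{\varepsilon}{u}.
\]
Since $a_{N+1} \geq 0$ and $a_0 \leq b$, we get $b \geq (N+1)\varepsilon/u$, that is $N+1 \leq bu/\varepsilon$. This contradicts $N = \lceil bu/\varepsilon\rceil \geq bu/\varepsilon$. Therefore some $k\leq N$ satisfies $c_k a_k - d_k a_{k+1} < \varepsilon$.

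There is no real obstacle here; the only points needing care are edge cases. When $u\le 0$, the nonnegativity of $c_k$ forces $c_k=0$, so $c_ka_k-d_ka_{k+1} = -d_k a_{k+1}\le 0<\varepsilon$ and the claim holds already at $k=0$. The case $b=0$ is covered because then $N=0$ and $k=0$ works by the same bound. Note also that the argument uses nonnegativity of $a_{k+1}$ (to replace $d_k$ by $c_k$) and of $a_k-a_{k+1}$ (to use $c_k\le u$), both of which are given by the hypotheses.
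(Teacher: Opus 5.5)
Your proof is correct and follows the same route as the paper. Both use the reduction $c_ka_k - d_ka_{k+1} \leq c_k(a_k-a_{k+1}) \leq u(a_k-a_{k+1})$; the paper then cites \cite[Lemma 4.2]{Pischke2025} for the sequence $\{ua_k\}_k$, which is exactly the telescoping argument you carry out inline, and your handling of the edge cases $u=0$ and $b=0$ is sound.
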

\begin{proof}
For any $k\in\mathbb{N}$, we have
\[
c_k a_k - d_k a_{k+1}\leq c_k(a_k - a_{k+1})\leq u(a_k - a_{k+1}).
\]
The result now follows from \cite[Lemma 4.2]{Pischke2025} applied to $\{ua_k\}_k$.
\end{proof}

For the rest of this section, we now suppose that $S(f) \neq \emptyset$ and denote the unique element of $S(f)$ by $x^*$. Similarly, we will presume that, given a suitable sequence $\{\beta_k\}_k$, \eqref{PPA} is well-defined and generates a sequence $\{x_k\}_k$.

By Theorem \ref{nonemptyness_thrm}, the former is guaranteed under the conditions $(A0)$, $(A1)$, $(A2)$ and $(A4)$ and by Proposition \ref{Lemma10}, the latter is guaranteed under the conditions $(A1)$, $(A4)$ and $(A5)$, and whenever $S(f) \neq \emptyset$. However, as we will see, once these two properties are assumed, the remainder of the convergence proof does not rely on $(A1)$ (see in particular Remark \ref{firstAlgStrengthening} later on).

As outlined before, key for establishing this result over the broader set of parameters $[\lowerbound, \upperbound]$ is to choose $\lambda$ in \ref{fejer_less_easy_my_way_uni} of Lemma \ref{fejer_uni} adaptively as $\lambda_{k}$, depending on the values of $\beta_{k}$. The key function that later allows us to compute $\lambda_k$ from $\beta_k$ is now the following: For $x \geq \lowerbound$, set
\begin{equation}
N(x)  := \frac{\gamma x}{2(1+\gamma x)} + \frac{\sqrt{\gamma^2 x^2 - 8 x\eta(1+\gamma x)}}{2(1+\gamma x)},\tag{$\dagger$}\label{N}
\end{equation}
where $\gamma$ and $\eta$ are as in $(A4)$ -- $(A6)$. Indeed, notice that $N$ is well-defined on $[\lowerbound,\infty)$, as for $x \geq \lowerbound$ the value $\gamma^2x^2 - 8x\eta(1+\gamma x)$ is positive. The key properties of $N$ are now as follows:

\begin{lemma}\label{Nporps}
Consider $N$ as defined in \eqref{N}. Then, we have the following:
\begin{enumerate}
\item[(i)] $N(\lowerbound) = \frac{1}{2},$
\item[(ii)] $N(\upperbound) > \frac{4\gamma}{3(8\eta+2\gamma)} > \frac{1}{2}$,
\item[(iii)] $N: [\lowerbound, \infty) \rightarrow [0,1]$ is increasing,
\item[(iv)] $\frac{N(x)}{2x}$ is decreasing on $[\lowerbound, \upperbound]$.
\end{enumerate}
\end{lemma}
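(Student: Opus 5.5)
The plan is to view $N(x)$ as a root of a quadratic in $\lambda$; all four properties then reduce to short computations. Put $D(x):=\gamma^2x^2-8x\eta(1+\gamma x)$ and $s(x):=\sqrt{D(x)}$. Then $N(x)$ is the larger root of
\[
p_x(\lambda):=(1+\gamma x)\lambda^2-\gamma x\lambda+2x\eta .
\]
This is exactly the condition $c_k\le d_k$ rewritten with $\beta_k=x$: multiply $\frac{\lambda(1-\lambda)(1+\gamma x)}{2x}-\eta\ge \frac{\lambda}{2x}$ by $2x$ and rearrange.

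First I will record the behaviour of $D$. Using $x(\gamma-8\eta)\ge 1$ on the domain,
\[
D(x)=x\bigl(\gamma x(\gamma-8\eta)-8\eta\bigr)\ge x(\gamma-8\eta)\ge 1 .
\]
Moreover $D'(x)=2\gamma x(\gamma-8\eta)-8\eta\ge 2\gamma-8\eta>0$ there. Hence $s\ge 1$ on $[\lowerbound,\infty)$, with equality at the left endpoint.

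\textbf{(i) and (ii).} For (i), at $x=\lowerbound$ the identity $x(\gamma-8\eta)=1$ gives $D=1$. Hence $N=\frac{\gamma x+1}{2(1+\gamma x)}=\frac12$. For (ii), plug in $x=\upperbound$ to get
\[
N\Bigl(\upperbound\Bigr)=\frac{\gamma+\sqrt{\gamma^2-8\gamma\eta-32\eta^2}}{2(\gamma+4\eta)} .
\]
The target $\frac{4\gamma}{3(8\eta+2\gamma)}$ equals $\frac{2\gamma}{3(\gamma+4\eta)}$. So the first inequality reduces to $\sqrt{\gamma^2-8\gamma\eta-32\eta^2}>\gamma/3$, i.e.\ $\gamma^2>9\gamma\eta+36\eta^2$. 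This follows from $(A6)$: $\gamma^2>12\gamma\eta=9\gamma\eta+3\gamma\eta>9\gamma\eta+36\eta^2$. The second inequality $\frac{2\gamma}{3(\gamma+4\eta)}>\frac12$ is equivalent to $\gamma>12\eta$, which is $(A6)$ again.

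\textbf{(iii).} For the range, $N\ge 0$ is clear, and $s\le\gamma x$ gives $N\le\frac{\gamma x}{1+\gamma x}<1$. For monotonicity I differentiate implicitly in $p_x(N(x))=0$. At the larger root, $\partial_\lambda p_x=2(1+\gamma x)N-\gamma x=s>0$. For the $x$-derivative I use the root identity $(1+\gamma x)\lambda^2-\gamma x\lambda=-2x\eta$. It yields
\[
\partial_x p_x(\lambda)=\gamma\lambda^2-\gamma\lambda+2\eta=-\frac{\lambda^2}{x}.
\]
Therefore $N'(x)=\frac{N(x)^2}{x\,s(x)}>0$. Since $D>0$ on the domain, $N$ is smooth there, so this argument is legitimate.

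\textbf{(iv).} The sign of $\bigl(N(x)/(2x)\bigr)'$ is that of $xN'-N$. By the formula above,
\[
xN'-N=\frac{N^2}{s}-N=\frac{N(N-s)}{s}.
\]
So it suffices to show $N<s$, i.e.\ $\gamma x+s<2s(1+\gamma x)$, i.e.\ $\gamma x<s(1+2\gamma x)$. This holds because $s\ge1$ gives $s(1+2\gamma x)\ge 1+2\gamma x>\gamma x$, so in fact it holds on the whole domain.

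The only delicate step is (iii)–(iv): the key simplification $\partial_x p=-\lambda^2/x$ has to be used correctly, together with $\partial_\lambda p=s$ at the root. The rest is bookkeeping.
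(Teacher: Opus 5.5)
Your proof is correct. For (i) and (ii) it is essentially the paper's computation: the paper bounds $32\eta^2<\frac{2}{9}\gamma^2$ and $8\eta\gamma<\frac{6}{9}\gamma^2$ separately to get $\sqrt{\gamma^2-8\gamma\eta-32\eta^2}>\gamma/3$, which is the inequality you reduce to.

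For (iii) and (iv) you take a genuinely different route. The paper proves (iii) without calculus. It rewrites $N(x)=\bigl(\gamma+\sqrt{\gamma^2-8\eta/x-8\gamma\eta}\bigr)/(2/x+2\gamma)$ and observes that the numerator increases while the denominator decreases. For (iv) it differentiates the explicit expression $\frac{N(x)}{2x}=\frac{\gamma+q(x)}{4(1+\gamma x)}$, where $q(x)=\sqrt{\gamma(\gamma-8\eta)-8\eta/x}$. It then estimates the three resulting terms crudely:
\begin{itemize}
\item $q\ge\gamma-8\eta$, from $x\ge\lowerbound$;
\item $1+\gamma x\le 1+\gamma/(4\eta)$, from $x\le\upperbound$;
\item $(A6)$, to make the resulting numerator $-\gamma^2+4\eta(\gamma-8\eta)$ negative.
\end{itemize}

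Your implicit differentiation of $p_x(N(x))=0$ gives the closed form $N'(x)=N(x)^2/(x\,s(x))$. This yields (iii) at once, since $N>0$ because $\gamma x>0$. It also reduces (iv) to $N<s$, which you could shortcut further as $N<1\le s$. In particular, your argument shows that $N(x)/(2x)$ is decreasing on all of $[\lowerbound,\infty)$. It uses neither the upper endpoint $\upperbound$ nor $(A6)$ beyond $\gamma>8\eta$.

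Your route is therefore shorter, proves a stronger statement, and makes visible why $N$ is the natural choice: it is the boundary root of $c_k=d_k$. The paper's route needs nothing beyond direct differentiation of the given formula. The price is a messier estimate that is valid only on $[\lowerbound,\upperbound]$.
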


\begin{proof}
\begin{enumerate}
\item[(i)] Notice that $\sqrt{\gamma^2 x^2 - 8x\eta(1+\gamma x)} = x \cdot \sqrt{\gamma^2 - 8\eta / x - 8\gamma\eta}$. Hence
\[
N\Big(\lowerbound\Big) = \frac{\gamma}{2(\gamma-8\eta)+2\gamma}+\frac{\sqrt{\gamma^2-8\eta(\gamma-8\eta)-8\gamma\eta}}{2(\gamma-8\eta)+2\gamma} = \frac{\gamma + \sqrt{(\gamma-8\eta)^2}}{4\gamma-16\eta} = \frac{1}{2}.
\]
\item[(ii)] Using $(A6)$, we obtain
\begin{align*}
N\Big(\upperbound\Big)&=\frac{\gamma}{8\eta+2\gamma}+\frac{\sqrt{\gamma^2-32\eta^2-8\eta\gamma}}{8\eta+2\gamma} \overset{12\eta<\gamma}{>}\frac{\gamma}{8\eta+2\gamma}+\frac{\sqrt{\gamma^2-\frac{2}{9}\gamma^2-\frac{6}{9}\gamma^2}}{8\eta+2\gamma} \\[2ex]
&= \frac{\gamma+\frac{1}{3}\gamma}{8\eta+2\gamma}\overset{12\eta<\gamma}{>}\frac{\gamma+4\eta}{8\eta+2\gamma} = \frac{1}{2}.
\end{align*}
\item[(iii)] Notice that
\begin{equation*}
N(x) \leq \frac{\gamma x}{2(1+\gamma x)} + \frac{\sqrt{\gamma^2 x^2 }}{2(1+\gamma x)} = \frac{2\gamma x}{2 + 2\gamma x} < 1
\end{equation*}
for all $x \geq 1/(\gamma-8\eta)$. Together with (i), this implies that $N: [\lowerbound, \infty) \rightarrow [0,1]$. Notice further that
\begin{equation}
N(x) = \frac{\gamma}{\frac{2}{x}+2\gamma}+ \frac{\sqrt{\gamma^2-\frac{8\eta}{x}-8\gamma\eta}}{\frac{2}{x}+2\gamma}.\tag{$-$}\label{Nbeta}
\end{equation}
Form this, it is clear that  $N: [\lowerbound, \infty) \rightarrow [0,1]$ is increasing.
\item[(iv)] We show that the derivative of $\frac{N(x)}{2x}$ is negative for $x \in \Big[\lowerbound, \upperbound\Big].$ With \eqref{Nbeta}, we obtain
\[
\Big(\frac{N(x)}{2x}\Big)'(x) = -\frac{\gamma^2}{4(1+\gamma x)^2} + \frac{4\eta}{\sqrt{\gamma(\gamma-8\eta)-\frac{8\eta}{x}} \cdot 4 (1+\gamma x) x^2}-\frac{\sqrt{\gamma(\gamma-8\eta)-\frac{8\eta}{x}}\cdot \gamma}{4(1+\gamma x)^2}.
\]
As $x \geq \lowerbound$, we know $\sqrt{\gamma (\gamma - 8 \eta) - \frac{8\eta}{x}} \geq (\gamma - 8\eta)$ and so get 
\[
\Big(\frac{N(x)}{2x}\Big)'(x) \leq - \frac{\gamma^2}{4(1+\gamma x)^2}+\frac{4\eta (\gamma - 8\eta)(1+\gamma x)}{ 4(1+\gamma x)^2}-\frac{(\gamma-8\eta)\gamma}{4(1+\gamma x)^2}.
\]
With $x \leq \upperbound$, this yields
\[
\Big(\frac{N(x)}{2x}\Big)'(x) \leq - \frac{\gamma^2}{4(1+\gamma x)^2}+\frac{4\eta (\gamma - 8\eta)(1+\frac{\gamma}{4\eta})}{ 4(1+\gamma x)^2}-\frac{(\gamma-8\eta)\gamma}{4(1+\gamma x)^2}= \frac{-\gamma^2 + 4\eta (\gamma - 8\eta)}{4(1+\gamma x)^2}.
\]
Finally, using $12 \eta < \gamma$, we obtain
\[
\Big(\frac{N(x)}{2x}\Big)'(x) \leq  \frac{-2/3 \gamma ^2 - 32\eta^2}{4(1+\gamma x)^2}
\]
which is (using that $x\geq 0$) negative by $(A6)$.
\end{enumerate}
\end{proof}

Using $N$, we can now give an explicit definition for the adaptive choice of $\lambda_k$. The following lemma provides this definition, together with the essential properties of the corresponding quantities $e_k$, $c_k$ and $d_k$ as derived from Lemma \ref{Nporps}.

\begin{lemma}\label{collection}
Suppose that $(A6)$ holds. For any $k \in \N$, let $\beta_k \in [\lowerbound, \upperbound]$ and consider
\[
e_k := \frac{\lambda_k}{2\beta_k}-\eta, \qquad
	c_k := \frac{\lambda_k}{2\beta_k}, \qquad
	d_k := \frac{\lambda_k(1-\lambda_k)(1+\gamma\beta_k)}{2\beta_k}-\eta
\]
for
\[
\lambda_k:=\mathrm{min}\Big\{N(\beta_k), \frac{1}{2}+\frac{1}{\sqrt{8}}\Big\},
\]
where $N$ is defined as in \eqref{N}. Then, for any $k \in \N$:
\begin{itemize}
\item[(i)] $c_k - d_k \leq 0$,
\item[(ii)] $\lambda_k \in \big[\frac{1}{2},\frac{1}{2}+\frac{1}{\sqrt{8}}\big]$,
\item[(iii)] $e_k \geq (2\gamma\eta-24\eta^2)/(3(8\eta+2\gamma))>0$,
\item[(iv)] $c_k \leq (\gamma-8\eta)/4$.
\end{itemize}
\end{lemma}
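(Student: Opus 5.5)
The plan is to reduce each item to elementary facts about the quadratic whose larger root is $N(\beta_k)$, and then apply the monotonicity properties of Lemma \ref{Nporps}. I would prove (ii) first, since (i), (iii) and (iv) all use that $\lambda_k \geq \frac12$.

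\emph{Item (ii).} By Lemma \ref{Nporps}(i) and (iii), $N$ is increasing on $[\lowerbound,\infty)$ with $N(\lowerbound)=\frac12$. Hence $N(\beta_k)\geq \frac12$ for $\beta_k\geq\lowerbound$. Since also $\frac12+\frac1{\sqrt8}>\frac12$, the minimum $\lambda_k$ lies in $[\frac12,\frac12+\frac1{\sqrt8}]$.

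\emph{Item (i).} Multiply $c_k\leq d_k$ by $2\beta_k>0$. It becomes
\[
\lambda_k\leq \lambda_k(1-\lambda_k)(1+\gamma\beta_k)-2\beta_k\eta,
\]
which is equivalent to $q(\lambda_k)\leq 0$ with
\[
q(\lambda):=(1+\gamma\beta_k)\lambda^2-\gamma\beta_k\lambda+2\beta_k\eta.
\]
The discriminant of $q$ is $\gamma^2\beta_k^2-8\beta_k\eta(1+\gamma\beta_k)$, which is nonnegative for $\beta_k\geq\lowerbound$, as already noted after \eqref{N}. The larger root of $q$ is exactly $N(\beta_k)$. The smaller root is at most $\frac{\gamma\beta_k}{2(1+\gamma\beta_k)}<\frac12$. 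Since $q\leq 0$ between its roots, it suffices that $\lambda_k$ lies between the two roots. This holds because $\frac12\leq\lambda_k\leq N(\beta_k)$, the lower bound coming from (ii) and the upper bound from the definition of $\lambda_k$ as a minimum. Placing $\lambda_k$ above the smaller root is the only step needing a little care, and it is exactly where (ii) is used.

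\emph{Item (iv).} Using $\lambda_k\leq N(\beta_k)$ and Lemma \ref{Nporps}(iv), we have $c_k\leq \frac{N(\beta_k)}{2\beta_k}\leq \frac{N(x_0)}{2x_0}$ with $x_0:=\lowerbound$. By Lemma \ref{Nporps}(i), $N(x_0)=\frac12$, so this bound equals $\frac12\cdot\frac{\gamma-8\eta}{2}=\frac{\gamma-8\eta}{4}$.

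\emph{Item (iii).} I would split on which term attains the minimum defining $\lambda_k$.

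If $\lambda_k=N(\beta_k)$, then Lemma \ref{Nporps}(iv) and (ii) give
\[
\frac{\lambda_k}{2\beta_k}\geq 2\eta\, N\Big(\upperbound\Big)> \frac{8\gamma\eta}{3(8\eta+2\gamma)}.
\]
Subtracting $\eta$ then yields
\[
e_k> \frac{8\gamma\eta-3\eta(8\eta+2\gamma)}{3(8\eta+2\gamma)}=\frac{2\gamma\eta-24\eta^2}{3(8\eta+2\gamma)}.
\]

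If $\lambda_k=\frac12+\frac1{\sqrt8}$, then $\beta_k\leq\upperbound$ gives $e_k\geq(\frac12+\frac1{\sqrt8})2\eta-\eta=\frac{\eta}{\sqrt2}$. On the other hand,
\[
\frac{2\gamma\eta-24\eta^2}{3(8\eta+2\gamma)}\leq\frac{2\gamma\eta}{6\gamma}=\frac{\eta}{3}<\frac{\eta}{\sqrt2},
\]
so the claimed bound holds in this case too.

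Finally, the bound in (iii) is positive because $2\gamma\eta-24\eta^2=2\eta(\gamma-12\eta)>0$ by $(A6)$.
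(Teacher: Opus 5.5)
Your proof is correct and follows the paper's argument essentially step for step. Item (ii) comes from $N(\lowerbound)=\frac12$ and the monotonicity of $N$, and items (iii) and (iv) use Lemma \ref{Nporps}(ii) and (iv) with the same case split. The only cosmetic difference is in (i): you use that the convex quadratic $q$ is nonpositive between its roots to handle both choices of $\lambda_k$ at once. The paper instead checks $c_k-d_k=0$ at $\lambda_k=N(\beta_k)$ and then uses that $c_k-d_k$ is increasing in $\lambda_k$ on $[\frac12,1]$.
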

\begin{proof}
\begin{itemize}
\item[(i)] We have 
\[
c_k - d_k = \frac{\lambda_k - \lambda_k(1-\lambda_k)(1+\gamma\beta_k)}{2\beta_k}+\eta
\]
and argue that $c_k - d_k = 0$ if $\lambda_k = N(\beta_k)$. To see this, notice that
\[
c_k - d_k = 0 \Leftrightarrow \lambda_k^2 - \lambda_k\frac{\gamma\beta_k}{(1+\gamma\beta_k)}+\frac{2\beta_k\eta}{(1+\gamma\beta_k)} = 0.
\]
By definition of $N$, the right hand side now holds by the quadratic formula. Hence, the claim holds if $\lambda_{k} = N(\beta_k)$. It is left to show the result for cases where $\lambda_{k} = \frac{1}{2}+\frac{1}{\sqrt{8}} < N(\beta_k)$. For this, notice that if we regard $c_k-d_k$ as a function in (the values of) $\lambda_k$, it is increasing on the interval $[\frac{1}{2},1]$. Thus, $c_k-d_k \leq 0$ if $\lambda_k = \frac{1}{2}+\frac{1}{\sqrt{8}} < N(\beta_k)$, as $N(\beta_k) < 1$ by Lemma \ref{Nporps}, (iii).
\item[(ii)] From Lemma \ref{Nporps}, (i), we know that $N(\lowerbound) = \frac{1}{2}$. As by Lemma \ref{Nporps}, (iii), we know further that $N$ is increasing, the result follows from the definition of $\lambda_k$.
\item[(iii)] Suppose $\lambda_k = N(\beta_k)$. As $\frac{N(x)}{2x}$ decreases on the interval $[\lowerbound, \upperbound]$ by Lemma \ref{Nporps}, (iv), together with Lemma \ref{Nporps}, (ii) we obtain
\begin{align*}
e_k  &= \frac{N(\beta_k)}{2\beta_k} - \eta\geq \frac{N\Big(\upperbound\Big)}{2}4\eta-\eta \\
&> \frac{8\gamma\eta}{3(8\eta+2\gamma)}-\eta = \frac{8\gamma\eta-6\gamma\eta-24\eta^2}{3(8\eta+2\gamma)} = \frac{2\gamma\eta-24\eta^2}{3(8\eta+2\gamma)}.
\end{align*}
It follows that $e_k > 0$ as $12\eta < \gamma$ by $(A6)$. Suppose that $\lambda_{k} = \frac{1}{2}+\frac{1}{\sqrt{8}}$. But then, we obtain
\[
e_k = \frac{\lambda_k}{2\beta_k}-\eta \geq \frac{1}{2}\Big(\frac{1}{2}+\frac{1}{\sqrt{8}}\Big)4\eta-\eta = \frac{2}{\sqrt{8}}\eta > \frac{2}{3}\eta >   \frac{2\gamma \eta - 24 \eta^2}{3(8\eta + 2\gamma)}.
\]
As before, we have $e_k > 0$ since $12\eta < \gamma$ by $(A6)$. This implies the result.
\item[(iv)] With Lemma \ref{Nporps}, (iv) and Lemma \ref{Nporps}, (i) we obtain
\[
c_k = \frac{\lambda_k}{2\beta_k} \leq \frac{N(\beta_k)}{2\beta_k} \leq \frac{1}{2} N\Big(\lowerbound\Big)(\gamma-8\eta) = \frac{\gamma-8\eta}{4}.\qedhere
\]
\end{itemize}
\end{proof}

Our main quantitative result for the asymptotic behavior of $d(x_k,x_{k+1})$ is now the following:

\begin{lemma}\label{new and better}
Let $f$ be a bifunction satisfying (A$i$) with $i = 2,4,5,6$. Let $\{\beta_k\}_k \subseteq [\lowerbound,\upperbound]$. Suppose that \eqref{PPA} is well-defined and let $\{x_k\}_k$ be its generated sequence. Let $x^*\in S(f)$. Then, for any $\varepsilon >0$:
\[
\exists k \leq \left \lceil \frac{3b^2(\gamma-8\eta)(2\gamma+8\eta)}{8\eta(\gamma-12\eta) \varepsilon}  \right \rceil (d^2(x_{k}, x_{k+1}) < \varepsilon),
\]
where $b^2 \geq d^2(x_0,x^*)$.
\end{lemma}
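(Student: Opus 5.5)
The plan is to combine the general inequality \ref{fejer_less_easy_my_way_uni} of Lemma \ref{fejer_uni} with the adaptive choice of $\lambda_k$ from Lemma \ref{collection}, and then apply the abstract Lemma \ref{easy_folklore}.

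\textbf{Step 1: the basic per-step inequality.} For each $k$, take $\lambda_k:=\min\{N(\beta_k),\frac12+\frac1{\sqrt8}\}$, which lies in $[0,1]$ by Lemma \ref{collection}(ii). Instantiating \ref{fejer_less_easy_my_way_uni} with $\lambda=\lambda_k$ gives
\[
e_k\, d^2(x_{k+1},x_k)\leq c_k\, d^2(x_k,x^*)-d_k\, d^2(x_{k+1},x^*),
\]
with $e_k,c_k,d_k$ as in Lemma \ref{collection}.

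\textbf{Step 2: verifying the hypotheses of Lemma \ref{easy_folklore}.} Set $a_k:=d^2(x_k,x^*)$.
\begin{itemize}
\item[(a)] $\{a_k\}_k$ is nonnegative and nonincreasing by Lemma \ref{fejeractually}, which is valid for $\{\beta_k\}_k\subseteq[\lowerbound,\upperbound]$.
\item[(b)] We have $a_0\leq b^2$ by assumption.
\item[(c)] $c_k\leq d_k$ by Lemma \ref{collection}(i).
\item[(d)] $c_k\leq u:=(\gamma-8\eta)/4$ by Lemma \ref{collection}(iv).
\item[(e)] $c_k\geq 0$ trivially, and $d_k\geq c_k\geq0$.
\end{itemize}

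\textbf{Step 3: applying the lemma and converting to a bound on $d^2(x_k,x_{k+1})$.} Let $e:=(2\gamma\eta-24\eta^2)/(3(8\eta+2\gamma))>0$. This is a uniform lower bound on $e_k$ by Lemma \ref{collection}(iii). Apply Lemma \ref{easy_folklore} with tolerance $e\varepsilon$. This yields some $k\leq\lceil b^2u/(e\varepsilon)\rceil$ with
\[
c_ka_k-d_ka_{k+1}<e\varepsilon.
\]
Hence
\[
e\, d^2(x_{k+1},x_k)\leq e_k\, d^2(x_{k+1},x_k)<e\varepsilon,
\]
so $d^2(x_k,x_{k+1})<\varepsilon$.

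\textbf{Step 4: simplifying the constant.} It remains to check that the bound matches the stated one. We have
\[
\frac{b^2u}{e}=\frac{b^2(\gamma-8\eta)}{4}\cdot\frac{3(8\eta+2\gamma)}{2\eta(\gamma-12\eta)}=\frac{3b^2(\gamma-8\eta)(2\gamma+8\eta)}{8\eta(\gamma-12\eta)},
\]
which is exactly the claimed expression.

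\textbf{Main obstacle.} The only delicate point is the existence of the uniform lower bound $e$ on $e_k$ simultaneously with $c_k\leq d_k$. Both are already provided by Lemma \ref{collection}, so the remaining work is bookkeeping. One small point needs care: Lemma \ref{easy_folklore} is applied with $\varepsilon$ replaced by $e\varepsilon$, and positivity of $e$ follows from $(A6)$.
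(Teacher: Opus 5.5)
Your proposal is correct and follows the paper's proof. It instantiates \ref{fejer_less_easy_my_way_uni} with the adaptive $\lambda_k$ of Lemma \ref{collection}, applies Lemma \ref{easy_folklore} with $u=(\gamma-8\eta)/4$, and then divides by the uniform lower bound on $e_k$. You also make explicit two things the paper's proof leaves implicit: the rescaling of the tolerance to $e\varepsilon$, and the appeal to Lemma \ref{fejeractually} for the monotonicity of $\{d^2(x_k,x^*)\}_k$ that Lemma \ref{easy_folklore} requires.
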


\begin{proof}
With Lemma \ref{fejer_uni}, we know that \ref{fejer_less_easy_my_way_uni} holds and in particular obtain 
\[
e_k d^2(x_{k+1},x_k) \leq c_k d^2(x_k,x^*) - d_k d^2(x_{k+1},x^*)
\]
for all $k \in  \N$, where $c_k, e_k$ and $d_k$ are defined as in Lemma \ref{collection}. By Lemma \ref{collection}, (i) and (ii), it holds that $0<c_k \leq d_k$ for all $k \in \N$. Further, Lemma \ref{collection}, (iv) implies  that $c_k \leq \frac{\gamma-8\eta}{4}$. We can thus apply Lemma \ref{easy_folklore} to obtain that for any $\epsilon > 0,$
\[
\exists k \leq \left \lceil \frac{b^2(\gamma-8\eta) }{ 4\varepsilon}  \right \rceil \left(e_k d^2(x_{k+1},x_k) < \varepsilon\right).
\]
With Lemma \ref{collection}, (iii), we now obtain
\[
\exists k \leq \left \lceil \frac{b^2 (\gamma-8\eta)}{ 4\varepsilon} \right \rceil  \left(\frac{2\gamma\eta-24\eta^2}{3(8\eta+2\gamma)} d^2(x_{k+1},x_k) < \varepsilon\right)
\]
for any $\varepsilon>0$. This implies the result.
\end{proof}

With this, we obtain a quantitative convergence theorem that generalizes the assumptions of Theorem \ref{ILconv}. In particular, the quantitative result we provide comes in the form of a sublinear non-asymptotic guarantee which only depends on the parameters $\gamma, \eta$ as in $(A4)$ -- $(A6)$, as well as a bound $b > 0$ on the distance of the starting point of \eqref{PPA} to the equilibrium point $x^{*}$.

\begin{theorem}\label{THEOREM2}
Let $X$ be a Hadamard space and let $f: X \times X \rightarrow \R$ be a bifunction satisfying $(Ai)$ with $i=2,4,5,6$. Let $\{\beta_k\}_k\subseteq [ \frac{1}{\gamma - 8\eta},\frac{1}{4 \eta}]$. Suppose that \eqref{PPA} is well-defined and let $\{x_k\}_k$ be its generated sequence. Suppose that $S(f) \neq \emptyset$. Then $\{x_k\}_k$ converges to the (unique) equilibrium point $x^*$ of $f$. Further, we have the non-asymptotic guarantee 
\begin{equation*}
\forall k > 2 \left(d(x_k, x^{*}) < \frac{C}{\sqrt{k-2}}\right)
\end{equation*}
where
\begin{equation*}
C := \frac{12\sqrt{3}b\gamma^2}{\sqrt{\eta}(\gamma-12\eta)^{3/2}}
\end{equation*}
with $b^2 \geq d^2(x_0,x^{*})$.
\end{theorem}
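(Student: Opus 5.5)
The plan is to get one inequality of the form $a\,d^2(x_{k+1},x^*)\le U\big(d^2(x_k,x^*)-d^2(x_{k+1},x^*)\big)$ with constants $a,U>0$ depending only on $\gamma,\eta$. Telescoping this (as in Lemma \ref{easy_folklore}) yields an index $k\le\lceil b^2U/\varepsilon\rceil$ with $d^2(x_{k+1},x^*)<\varepsilon/a$. Fejér monotonicity (Lemma \ref{fejeractually}) then propagates this bound to all later iterates, which gives the $1/\sqrt{k}$ rate.

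\emph{Step 1: a second choice of $\lambda$.} I will use \ref{fejer_less_easy_my_way_uni} of Lemma \ref{fejer_uni} with a choice of $\lambda$ different from the one in Lemma \ref{collection}. Write $D:=d(x_{k+1},x^*)$, $d_k:=d(x_k,x^*)$, $\delta:=d(x_k,x_{k+1})$ and $c:=\lambda/(2\beta_k)$. Subtracting $cD^2$ from both sides of \ref{fejer_less_easy_my_way_uni} gives
\[
\Big(\frac{\lambda\,(\gamma\beta_k-\lambda(1+\gamma\beta_k))}{2\beta_k}-\eta\Big)D^2\le c\,(d_k^2-D^2)+(\eta-c)\,\delta^2 .
\]
I take $\mu_k:=\gamma\beta_k/(2(1+\gamma\beta_k))$, which maximises the left coefficient over $\lambda$. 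That coefficient then equals $\gamma^2\beta_k/(8(1+\gamma\beta_k))-\eta$. This is increasing in $\beta_k$, so on $[\lowerbound,\upperbound]$ it is at least
\[
a:=\frac{\gamma^2}{16(\gamma-4\eta)}-\eta=\frac{(\gamma-8\eta)^2}{16(\gamma-4\eta)}>0 .
\]
With this choice, $c=\gamma/(4(1+\gamma\beta_k))\le\gamma/4$, and $0\le\eta-c\le\eta$ for these $\beta_k$.

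\emph{Step 2: removing the $\delta^2$ term.} The leftover $(\eta-c)\delta^2$ cannot simply be dropped, since $c<\eta$ at the upper end of the parameter range. To control it, I use the other instance of \ref{fejer_less_easy_my_way_uni}, namely the one with $\lambda_k$ from Lemma \ref{collection}. Items (i), (iii) and (iv) there give $e\,\delta^2\le c_k d_k^2-d_kD^2\le c_k(d_k^2-D^2)$, where $e:=(2\gamma\eta-24\eta^2)/(3(8\eta+2\gamma))$ and $c_k\le(\gamma-8\eta)/4$. Substituting this into Step 1 yields
\[
a\,D^2\le U\,(d_k^2-d_{k+1}^2),\qquad U:=\frac{\gamma}{4}+\frac{\eta(\gamma-8\eta)}{4e},
\]
valid for every $k$. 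All constants depend on $\gamma,\eta$ only.

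\emph{Step 3: rate and convergence.} The sequence $d^2(x_k,x^*)$ is nonincreasing by Lemma \ref{fejeractually} and bounded by $b^2$. Applying the argument of Lemma \ref{easy_folklore} (equivalently \cite[Lemma 4.2]{Pischke2025} to $\{Ua_k\}_k$ with $a_k:=d^2(x_k,x^*)$) gives some $k\le\lceil b^2U/\varepsilon\rceil$ with $a\,d^2(x_{k+1},x^*)<\varepsilon$. By Fejér monotonicity, $d^2(x_n,x^*)<\varepsilon/a$ for all $n\ge\lceil b^2U/\varepsilon\rceil+1$.

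Given $n>2$, I choose $\varepsilon$ so that $\lceil b^2U/\varepsilon\rceil+1\le n$, for instance $\varepsilon:=b^2U/(n-2)$; the offset by $2$ absorbs the ceiling and the index shift. This gives $d(x_n,x^*)<b\sqrt{U/a}/\sqrt{n-2}$, so in particular $x_n\to x^*$. Uniqueness of $x^*$ is Lemma \ref{uniqueness}, using $(A0)$, which follows from $(A2)$ and $(A5)$.

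The main obstacle is the last bookkeeping step: simplifying $b\sqrt{U/a}$ under $12\eta<\gamma$ to obtain exactly the stated $C=12\sqrt3\,b\gamma^2/(\sqrt\eta(\gamma-12\eta)^{3/2})$. The structure of the bound, with $\eta$ and $\gamma-12\eta$ in the denominator, comes from $e$ and $a$. If my constants do not match, I would use Lemma \ref{new and better} directly to bound $\delta^2$ in place of the route through $e$, accepting some crude estimates such as $\gamma-4\eta\le\gamma$ and $\gamma-8\eta\ge\gamma-12\eta$.
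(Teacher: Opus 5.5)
Your argument is correct, and it takes a genuinely different route from the paper's proof.

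\textbf{Comparison of routes.} The paper uses Lemma \ref{new and better} only to find one index $k$ with $d(x_k,x_{k+1})$ small. It then goes back to the prox inequality of Lemma \ref{Lemma11P} and redoes the case split on whether $f(x_k,x_{k+1})\geq f(x_k,x^*)$. Using \eqref{cauchy} with $\lambda=1/8$ and $\lambda=3/8$ in the respective cases, it derives an error bound $d(x_{k+1},x^*)\le\kappa\, d(x_{k+1},x_k)$, and Lemma \ref{fejeractually} propagates this to later iterates. You never leave the squared inequality $(\circ)_1$. You evaluate it at two values of $\lambda$ for the same $k$: your $\mu_k$, which maximizes the net coefficient of $d^2(x_{k+1},x^*)$, and the adaptive $\lambda_k$ of Lemma \ref{collection}, which absorbs the $\delta^2$ term. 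This gives the per-step estimate $a\,d^2(x_{k+1},x^*)\le U\bigl(d^2(x_k,x^*)-d^2(x_{k+1},x^*)\bigr)$, with no Cauchy--Schwarz and no second case analysis.

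\textbf{What each buys.} The paper's route is modular: its error bound is exactly the metric-regularity input of the Kohlenbach--L\'opez-Acedo--Nicolae framework (Remark \ref{rem:Fejer}), and it parallels the proof for \eqref{RS}. Your route gives sharper constants. It also gives more than you use: your per-step inequality rearranges to $d^2(x_{k+1},x^*)\le\frac{U}{U+a}\,d^2(x_k,x^*)$, which is a linear rate. The telescoping in your Step 3 discards this.

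\textbf{Finishing the bookkeeping.} You do not need to reproduce $C$ exactly. The claim is a strict upper bound, so $b\sqrt{U/a}\le C$ suffices, and it holds with room to spare.
\begin{itemize}
\item With $e=\eta(\gamma-12\eta)/(3(\gamma+4\eta))$ you get $U=(\gamma^2-6\gamma\eta-24\eta^2)/(\gamma-12\eta)\le\gamma^2/(\gamma-12\eta)$.
\item You also have $a=(\gamma-8\eta)^2/(16(\gamma-4\eta))\ge(\gamma-12\eta)^2/(16\gamma)$.
\item Hence $b^2U/a\le 16b^2\gamma^3/(\gamma-12\eta)^3=\frac{\eta}{27\gamma}\,C^2<C^2/324$, using $\gamma>12\eta$.
\end{itemize}
So your constant is smaller than $C$ by a factor greater than $18$. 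The fallback via Lemma \ref{new and better} is unnecessary. It would not combine with a per-step inequality anyway, since that lemma only produces a single good index.

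\textbf{A minor slip.} The claim $\eta-c\ge 0$ is false in general. For example, $\gamma=20\eta$ and $\beta_k=\lowerbound$ give $c=\frac{15}{8}\eta$. You only need $c\ge 0$, which yields $(\eta-c)\delta^2\le\eta\,\delta^2$, so the argument is unaffected.
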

\begin{proof}
It is enough to show the quantitative result. To that end, we first show
\begin{equation*}
\forall \varepsilon > 0 \ \forall k \geq \left( \left \lceil \frac{C^2}{\varepsilon^2} \right\rceil + 1\right) \left(d(x_k, x^{*}) < \varepsilon\right).\tag{Q}\label{firstQuant}
\end{equation*}
Given $\varepsilon >0$, using Lemma \ref{new and better}, take
\begin{equation*}
k \leq \left\lceil  \frac{3b^2(\gamma-8\eta)(2\gamma+8\eta)}{8\eta(\gamma-12\eta) } \cdot \frac{64 (3\gamma-16\eta)^2}{(3\gamma-28\eta)^2 \varepsilon^2}  \right \rceil\leq \left\lceil \frac{C^2}{\varepsilon^2}\right\rceil
\end{equation*}
such that 
\begin{equation*}
d(x_{k+1},x_k) <\frac{3\gamma-28\eta}{8(3\gamma-16\eta)} \varepsilon.
\end{equation*}
Notice that as $12 \gamma - 112 \eta \leq 24\gamma - 128 \eta$ by $(A6)$, we in particular obtain that $d(x_{k+1} , x_k ) < \frac{1}{4} \varepsilon$. Using that $x_{k+1} \in \mathrm{Prox}_{\beta_k f(x_k,\cdot)}(x_k)$, applying Lemma \ref{Lemma11P} to the function $f(x_k,\cdot)$ with $y = x^{*}$ yields
\begin{align*}
&f(x_k, x_{k+1}) - \mathrm{max}\{f(x_k, x_{k+1}), f(x_k,x^*)\}\\
&\qquad\leq   \frac{\lambda}{2}\Big(\frac{\lambda}{\beta_k} - \gamma + \lambda\gamma\Big)d^2(x_{k+1},x^*) + \frac{\lambda}{\beta_k} \Big\langle \vv{x_kx_{k+1}},\vv{x_{k+1}x^*}\Big\rangle
\end{align*}
for all $\lambda \in [0,1]$. We consider two cases:

\textbf{Case 1:} $f(x_k,x_{k+1}) \geq f(x_k, x^*)$. We get
\begin{equation*}
0 \leq \frac{\lambda}{2}\left(\frac{\lambda}{\beta_k} - \gamma + \lambda\gamma\right)d^2(x_{k+1},x^*) + \frac{\lambda}{\beta_k}\svv{x_k}{x_{k+1}}{x_{k+1}}{x^*}.
\end{equation*}
for all $\lambda \in [0,1]$. Multiplying by $\frac{\beta_k}{\lambda}$, using \eqref{cauchy} and the fact that $\beta_k \geq \lowerbound$, we obtain
\begin{align*}
&\frac{1}{2}\left((1-\lambda)\frac{\gamma}{(\gamma-8\eta)}-\lambda\right)d^2(x_{k+1},x^*) \\
&\qquad\leq \svv{x_k}{x_{k+1}}{x_{k+1}}{x^*} \leq d(x_{k+1},x_k)d(x_{k+1},x^*)
\end{align*}
for all $\lambda \in (0,1]$. Now either $d(x_{k+1},x^{*}) = 0$, and we are done by the Fej\'er monotonicity of $\{x_k\}_k$ w.r.t.\ $S(f)=\{x^{*}\}$ (recall Lemma \ref{fejeractually}), or $d(x_{k+1},x^{*}) > 0$, and we obtain
\[
\frac{1}{2}\left((1-\lambda)\frac{\gamma}{\gamma-8\eta}-\lambda\right)d(x_{k+1},x^*) \leq d(x_{k+1},x_k)
\]
for all $\lambda \in (0,1]$. This in particular holds for $\lambda = \frac{1}{8}$. By the above, we now have that
\[
d(x_{k+1},x_k)< \frac{1}{4}\varepsilon  = \frac{3\gamma + 4\eta}{12\gamma + 16\eta} \varepsilon<\frac{3\gamma + 4 \eta}{8(\gamma - 8 \eta)} \varepsilon.
\]
As $(3\gamma + 4 \eta)/(8(\gamma - 8 \eta)) > 0$ by $(A6)$, we get $d(x_{k+1},x^{*})< \varepsilon$ and thus, with Lemma \ref{fejeractually}, we obtain $d(x_j,x^{*}) < \varepsilon$ for all $j\geq k+1$.

\textbf{Case 2:} $f(x_k,x_{k+1}) < f(x_k, x^*)$. Then, just as in the proof of Lemma \ref{fejer} in that case, we obtain
\begin{align*}
0 &\leq  \frac{\lambda}{2}\left(\frac{\lambda}{\beta_k} - \gamma + \lambda\gamma\right)d^2(x_{k+1},x^*) + \frac{\lambda}{\beta_k}\svv{x_k}{x_{k+1}}{x_{k+1}}{x^*} \\
&\hphantom{\leq\mbox{}}+ \eta\Big(d^2(x_{k+1},x_k) + d^2(x_{k+1},x^*)\Big)
\end{align*}
for all $\lambda \in [0,1].$ Similar to Case 1, multiplying by $\frac{\beta_k}{\lambda}$ and using \eqref{cauchy}, we obtain
\begin{align*}
&\frac{1}{2}\left((1-\lambda)\gamma \beta_k-\lambda - \frac{2 \eta \beta_k}{\lambda}\right)d^2(x_{k+1},x^*) \\
&\qquad\leq d(x_{k+1},x_k)d(x_{k+1},x^*) + \frac{\eta \beta_k}{\lambda}d^2(x_{k+1},x_k)
\end{align*}
for all $\lambda \in (0,1]$. We make two further case distinctions:

\textbf{Case 2a:} $d(x_{k+1},x^{*}) \leq d(x_{k+1},x_k)$. As with $d(x_{k+1} , x_k ) < \frac{1}{4} \varepsilon$, we in particular have that $d(x_{k+1},x_k) < \varepsilon$, the result follows with Lemma \ref{fejeractually}.

\textbf{Case 2b:} $d(x_{k+1},x^{*}) > d(x_{k+1},x_k)$. This yields
\begin{equation*}
\frac{1}{2}\left((1-\lambda)\gamma \beta_k-\lambda - \frac{2\eta \beta_k}{\lambda}\right)d(x_{k+1},x^*)< d(x_{k+1},x_k) + \frac{\eta \beta_k}{\lambda} d(x_{k+1},x_k).
\end{equation*}
for all $\lambda \in (0,1]$. Multiplying both sides by $\frac{\lambda}{\lambda+\beta_k \eta}$ and rewriting the expression slightly, we thus get
\begin{equation*}\label{sigh}
\frac{1}{2(\frac{\lambda}{\beta_k} +  \eta)}\left((\lambda(1-\lambda)\gamma-2\eta)-\frac{\lambda^2}{\beta_k}\right) d(x_{k+1},x^{*})< d(x_{k+1},x_k)
\end{equation*}
for all $\lambda \in (0,1]$. We now want to bound the factor in front of $d(x_{k+1}, x^{*})$ from below. To this end, we set $\lambda = \frac{3}{8}$ and, using $\beta_k \geq \lowerbound$, obtain
\begin{equation*}
(\lambda(1-\lambda)\gamma-2\eta)-\frac{\lambda^2}{\beta_k} \geq (\lambda (1-\lambda) \gamma - 2 \eta) - \lambda^2(\gamma-8\eta)  = \frac{6\gamma -56 \eta}{64} >0
\end{equation*}
with $(A6)$. Then further, again as $\beta_k \geq \lowerbound$, we get
\begin{equation*}
\frac{1}{2(\frac{\lambda}{\beta_k} +  \eta)}\Bigg((\lambda(1-\lambda)\gamma-2\eta)-\frac{\lambda^2}{\beta_k}\Bigg) \geq \frac{1}{2\bigg(\frac{3(\gamma-8\eta)}{8}+\eta\bigg)}\bigg(\frac{6\gamma-56 \eta}{64}\bigg)= \frac{3\gamma - 28 \eta}{8(3\gamma-16\eta)}.
\end{equation*}
Together, the above yields
\[
\frac{3\gamma-28\eta}{8(3\gamma-16\eta)} d(x_{k+1}, x^{*})< d(x_{k+1},x_k) < \frac{3\gamma-28\eta}{8(3\gamma-16\eta)}\; \varepsilon
\]
and since $(3\gamma-28\eta)/8(3\gamma-16\eta)> 0$, we get $d(x_{k+1},x^{*}) < \varepsilon$. Using Lemma \ref{fejeractually}, we in particular have $d(x_j,x^{*}) < \varepsilon$ for any $j \geq k+1$.

This completes the proof of \eqref{firstQuant}. To obtain the non-asymptotic variant of that effective convergence result, set $\varepsilon := C/\sqrt{k-2}$ given a $k>2$, and rearrange.
\end{proof}

\begin{remark}\label{firstAlgStrengthening}
Already over $\mathbb{R}^d$, the above result strengthens the convergence result of Iusem and Lara (recall Theorem \ref{ILconv}) in three ways: First, the above result provides quantitative estimates that (to our knowledge) are already novel in the Euclidean case. Furthermore, the assumption that the parameter sequence $\{\beta_k\}_k$ is contained in the open interval $(\lowerbound, \upperbound)$ was weakened to the closed interval $[\lowerbound, \upperbound]$. Lastly, the above result allows for a careful examination of the main assumptions on the bifunction. In that context, we find that $(A3)$ can be dropped completely and, if the existence of an equilibrium point and the well-definedness of the algorithm are assumed beforehand, also $(A1)$ can be dropped. If, on the other hand, $(A1)$ is added as an assumption to Theorem \ref{THEOREM2} above, then the assumptions that $S(f) \neq \emptyset$ and that the algorithm is well-defined can both be dropped, using Theorem \ref{nonemptyness_thrm} and Proposition \ref{Lemma10} respectively.
\end{remark}

\begin{remark}\label{rem:Fejer}
The quantitative estimate contained in Theorem \ref{THEOREM2} above could also have been obtained by an application of the general results on rates of convergence for Fej\'er monotone sequences under general metric regularity assumptions as developed in the work of Kohlenbach, L\'opez-Acedo and Nicolae \cite{KohlenbachLN2019}. We refer to the master thesis of the first author \cite{Despres2026} for a detailed discussion in that vein.
\end{remark}

\begin{remark}[For logicians]\label{rem:Logic}
As the assumption $(A2)$, that is pseudomonotonicity, is a (generalized) $\Pi_2$-statement, the logical methodology of proof mining underlying the quantitative result contained in Theorem \ref{THEOREM2} would a priori suggest a dependence of the respective rate on a (uniform) quantitative rendering of this pseudomonotonicity of $f$, say in the form of a \emph{modulus of uniform pseudomonotonicity} for $f$, that is for a fixed $ o \in X$ a function $p: \N \times \N \rightarrow \N$ such that
\begin{equation*}
f(x,y) \geq -\frac{1}{p(b,n) +1} \rightarrow f(y,x) \leq \frac{1}{n+1}
\end{equation*}
for all $b, n \in \N$ and all $x,y \in X$ with $d(o,x),d(o,y) \leq b$. By the general logical metatheorems of proof mining for metric and $\CAT$-spaces \cite{KohlenbachGerhardy07}, an effective modulus of uniform pseudomonotonicity can be extracted from a corresponding (suitable) proof of pseudomonotonicity. Indeed, as an example for this, consider $f_h(x,y):= h(y) - h(x)$ as defined in Example \ref{ex_min_is_equi} for some lsc and strongly quasiconvex $h: X \rightarrow \R$, where it can be rather immediately seen that a modulus of uniform pseudomonotonicity is given by $p(b,n) := n$.

However, the extracted rate of convergence from Theorem \ref{THEOREM2} \emph{does not} depend on such a modulus and in fact this independence can be logically explained and guaranteed a priori. Indeed, in the context of Theorem \ref{THEOREM2}, the assumption $(A2)$ is only ever used to deduce $\forall x\in X\left(f(x, x^{*}) \leq 0\right)$ from the characterizing property $\forall x\in X \left(f(x^{*},x ) \geq 0\right)$ of a given (but fixed) solution $x^*$. Instead of $(A2)$, it hence suffices to work only with the two properties above, formulated relative to such a fixed solution $x^*$. As these are universal properties, they do not contribute to the extracted bounds, which guarantees the independence of the resulting rate from any such modulus of uniform pseudomonotonicity a priori. We refer to the master thesis of the first author \cite{Despres2026} for a more in-depth discussion of the surrounding logical aspects.
\end{remark}

We end this section by mentioning two further quantitative results that allow for a simplification of the constant $C$ featuring in Theorem \ref{THEOREM2} under additional assumptions on the parameters. Concretely, as $\{\beta_k\}_k$ can be freely chosen, and since $(\lowerbound, \upperbound)$ is nonempty, a permissible restriction for applying \eqref{PPA} would be to choose $\{\beta_k\}_k$ from a compact subinterval of $(\lowerbound, \upperbound)$. If one follows this line, both the constant $C$ and the resulting arguments for the convergence of the method simplify drastically, to the point where the adaptive choice of $\lambda_k$ depending on $\beta_k$, as facilitated above by the function $N$ from \eqref{N}, is no longer necessary. We collect the resulting quantitative estimates in the following theorem, but omit the (simplified) proofs for them, which can be found in the master thesis of the first author \cite{Despres2026}.

\begin{theorem}\label{THEOREMa}
Let $X$ be a Hadamard space and let $f: X \times X \rightarrow \R$ be a bifunction satisfying $(Ai)$ with $i=2,4,5,6$. Let $\Theta >0$ with $\lowerbound < \Theta < \upperbound$ and let $\{\beta_k\}_k\subseteq [\lowerbound,\Theta]$. Suppose that \eqref{PPA} is well-defined and let $\{x_k\}_k$ be its generated sequence. Suppose that $S(f) \neq \emptyset$. Then $\{x_k\}_k$ converges to the (unique) equilibrium point $x^*$ of $f$. Further, we have the non-asymptotic guarantee
\begin{equation*}
\forall k > 2  \left(d(x_k, x^{*}) < \frac{C'}{\sqrt{k-2}}\right)
\end{equation*}
where
\begin{equation*}
C':= \frac{24b \gamma}{\sqrt{\Theta^*}(3\gamma-8\eta)}
\end{equation*}
with $b^2 \geq d^2(x_0,x^{*})$ and $\Theta^* := 1-4\eta \Theta$. If we further have a $\theta>0$ such that $\{\beta_k\}_k \subseteq [\theta,\Theta] \subseteq ( \frac{1}{\gamma - 8\eta},\frac{1}{4 \eta})$, then we can set
\[
C' := \mathrm{max}\left\{1, \frac{3}{2 \theta^*}\right\} \frac{b }{\sqrt{\Theta^*}}
\]
with $b$ and $\Theta^*$ as before as well as $\theta^* :=  (\gamma-8\eta)\theta-1$.
\end{theorem}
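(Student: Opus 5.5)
The plan is to rerun the proof of Theorem \ref{THEOREM2} with the adaptive choice $\lambda_k$ replaced by the fixed choice $\lambda=\frac12$. Because $\beta_k\le\Theta<\upperbound$, the coefficient in front of $d^2(x_{k+1},x_k)$ in \ref{fejer_less_easy_uni} is now bounded away from zero uniformly, so the function $N$ from \eqref{N} is not needed. Convergence itself follows from the quantitative statement, exactly as in Theorem \ref{THEOREM2}, so only the rate has to be proved.

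\textbf{Step 1: rate for $d(x_k,x_{k+1})$.} Start from \ref{fejer_less_easy_uni} of Lemma \ref{fejer_uni} and multiply by $4\beta_k$. This gives
\[
(1-4\eta\beta_k)\,d^2(x_{k+1},x_k)\le d^2(x_k,x^*)-\tfrac{(\gamma-8\eta)\beta_k+1}{2}\,d^2(x_{k+1},x^*).
\]
Since $\beta_k\ge\lowerbound$, the last coefficient is at least $1$. Since $\beta_k\le\Theta$, the left-hand coefficient is at least $\Theta^*=1-4\eta\Theta>0$. Hence $\Theta^* d^2(x_{k+1},x_k)\le a_k-a_{k+1}$ with $a_k:=d^2(x_k,x^*)$. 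This sequence is nonincreasing by Lemma \ref{fejeractually} and satisfies $a_0\le b^2$. Applying Lemma \ref{easy_folklore} with $c_k=d_k=u=1$ then yields some $k\le\lceil b^2/(\Theta^*\delta^2\varepsilon^2)\rceil$ with $d(x_{k+1},x_k)<\delta\varepsilon$, for any constant $\delta>0$ still to be fixed.

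\textbf{Step 2: transfer to $d(x_{k+1},x^*)$.} Here I copy the endgame of Theorem \ref{THEOREM2}. I apply Lemma \ref{Lemma11P} to $f(x_k,\cdot)$ with $y=x^*$ and split into the same cases.
\begin{itemize}
\item Case 1, $f(x_k,x_{k+1})\ge f(x_k,x^*)$: by \eqref{cauchy} and $\beta_k\ge\lowerbound$, $d(x_{k+1},x^*)$ is a constant multiple of $d(x_{k+1},x_k)$.
\item Case 2a, $d(x_{k+1},x^*)\le d(x_{k+1},x_k)$: this is trivial.
\item Case 2b: with the (A5)/(A2) bound, pick a fixed $\lambda$ (e.g.\ $\frac38$, or one tuned to the target constant) so that the prefactor of $d(x_{k+1},x^*)$ is a positive expression $\kappa(\gamma,\eta)$.
\end{itemize}
Choosing $\delta$ as the minimum of the resulting factors, I expect $\delta=(3\gamma-8\eta)/(24\gamma)$, gives $d(x_{k+1},x^*)<\varepsilon$. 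Fejér monotonicity (Lemma \ref{fejeractually}) then propagates this to all $j\ge k+1$. Setting $\varepsilon=C'/\sqrt{k-2}$ produces the stated bound with $C'=b/(\sqrt{\Theta^*}\delta)$.

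\textbf{Second constant, $\beta_k\ge\theta$.} The coefficient $\frac{(\gamma-8\eta)\beta_k+1}{2}$ is now at least $1+\theta^*/2$, which is strictly larger than $1$. I would use this surplus to control $d(x_{k+1},x^*)$ directly instead of through the case analysis. From Step 1's inequality,
\[
\tfrac{\theta^*}{2}\,d^2(x_{k+1},x^*)\le d^2(x_k,x^*)-d^2(x_{k+1},x^*)\le d(x_k,x_{k+1})\bigl(d(x_k,x^*)+d(x_{k+1},x^*)\bigr),
\]
where the last step is the triangle inequality. This should yield a linear bound $d(x_{k+1},x^*)\lesssim\max\{1,\frac{3}{2\theta^*}\}\,d(x_k,x_{k+1})$ after a small case split, for instance on whether $d(x_k,x_{k+1})\ge d(x_{k+1},x^*)$. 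Combined with Step 1, this gives the second value of $C'$.

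\textbf{Main obstacle.} I expect the hardest part to be the bookkeeping in Step 2: choosing $\lambda$ in Case 2b and the split in the $\theta$-variant so that the constants come out exactly as $(3\gamma-8\eta)/(24\gamma)$ and $\max\{1,3/(2\theta^*)\}$. The structural argument is routine once Step 1 is in place.
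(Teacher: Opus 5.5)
Your Step 1 is correct, including $\Theta^*d^2(x_{k+1},x_k)\le a_k-a_{k+1}$, and the overall architecture does give \emph{some} rate $C'/\sqrt{k-2}$.

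\textbf{The gap is in Step 2.} There you pass on only the information $e:=d(x_{k+1},x_k)<\delta\varepsilon$ through the case analysis of Theorem~\ref{THEOREM2}. This cannot give $\delta=\frac{3\gamma-8\eta}{24\gamma}$, so the step you defer as ``bookkeeping'' is a real obstruction.
\begin{itemize}
\item With $\lambda=\frac38$, the Case~2b factor is $\frac{3\gamma-28\eta}{8(3\gamma-16\eta)}$. This is strictly smaller than $\frac{3\gamma-8\eta}{24\gamma}$, since $24\gamma(3\gamma-28\eta)-8(3\gamma-16\eta)(3\gamma-8\eta)=-96\gamma\eta-1024\eta^2<0$.
\item That factor is increasing in $\beta_k$, so the extra bound $\beta_k\le\Theta$ does not help.
\item Near $\gamma=12\eta$ no choice of $\lambda$ rescues it. For $\gamma=13\eta$ and $\beta_k=\frac{1}{5\eta}$, the best Case~2b factor is about $0.063$. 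Even solving $\alpha D^2\le eD+\mu e^2$ (with $D:=d(x_{k+1},x^*)$) exactly and optimising over $\lambda$ gives only about $0.095<\frac{31}{312}$.
\end{itemize}
So your route proves the theorem only with a larger constant, e.g.\ $\frac{8b(3\gamma-16\eta)}{\sqrt{\Theta^*}(3\gamma-28\eta)}$. The $\theta$-variant has the same defect: $\frac{\theta^*}{2}D^2\le e(e+2D)$ only yields $D\le\frac{2+\sqrt{4+2\theta^*}}{\theta^*}\,e$ (your split gives $\frac{6}{\theta^*}e$), not $\max\{1,\frac{3}{2\theta^*}\}\,e$.

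\textbf{The missing idea} is to keep what Step 1 actually provides. At the selected $k$ you have $\Delta:=d^2(x_k,x^*)-d^2(x_{k+1},x^*)<\Theta^*\delta^2\varepsilon^2$, and for every $k$ you have $(1-4\eta\beta_k)e^2\le\Delta$. Substitute $d^2(x_k,x^*)=D^2+\Delta$ into $(\circ)_1$ multiplied by $2\beta_k/\lambda$:
\[
\Big((1-\lambda)\gamma\beta_k-\lambda-\tfrac{2\eta\beta_k}{\lambda}\Big)D^2\le\Delta+\Big(\tfrac{2\eta\beta_k}{\lambda}-1\Big)e^2 .
\]
This avoids the lossy Cauchy--Schwarz step.
\begin{itemize}
\item For the first constant take $\lambda=\frac13$. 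The left coefficient is at least $\frac{\gamma-10\eta}{3(\gamma-8\eta)}\ge\frac16$, and $6\eta\beta_k-1<\frac12$. Hence $D^2\le6\Delta+3e^2\le9\Delta/\Theta^*<9\delta^2\varepsilon^2$, so $\delta=\frac13$ works. This gives $C'=3b/\sqrt{\Theta^*}$, which is below the stated constant because $\frac{24\gamma}{3\gamma-8\eta}>8$.
\item For the $\theta$-variant take $\lambda=2\eta\beta_k$, which kills the $e^2$-term. With $s=(\gamma-8\eta)\beta_k\ge1+\theta^*$, the coefficient equals $(s-1)(1-2\eta\beta_k)+4\eta\beta_k(1-4\eta\beta_k)\ge\theta^*(1-2\eta\beta_k)$. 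Since $\Theta^*\le1-4\eta\beta_k$, this gives $D^2<\delta^2\varepsilon^2/\theta^*$. So $\delta=\sqrt{\theta^*}$ suffices, and $1/\sqrt{\theta^*}\le\max\{1,\frac{3}{2\theta^*}\}$.
\end{itemize}
Since $D^2\le9\Delta/\Theta^*$ holds for every $k$, this also shows $d^2(x_{k+1},x^*)\le\frac{9}{9+\Theta^*}\,d^2(x_k,x^*)$, i.e.\ linear convergence.
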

	
\section{Regularized selection methods}\label{subsec_alog2}

\subsection{The method and preliminaries}

We now move to the second method for solving the equilibrium problem \eqref{equil} over a Hadamard space $(X,d)$ for a given bifunction $f: X\times X \rightarrow \R$, now following the first type of method discussed in the introduction which regularizes the bifunction as a whole (yielding the notion of a resolvent of the bifunction). We recall it here first over a Hilbert space $(X,\langle\cdot,\cdot\rangle)$ and a given closed and convex set $K\subseteq X$. Given a starting point $x_0\in K$ and a sequence $\{\beta_k\}_{k} \subseteq (0,\infty)$, recursively define $x_{k+1}$ by choosing
\[
x_{k+1} \in S(K,f_k)\text{ for }f_k(x,y) := f(x,y) + \frac{1}{\beta_k} \langle x-x_k,y-x\rangle,\tag{\%}\label{preRS}
\]
where $S(K,f_k)$ refers to the solution set of the equilibrium problem defined by $f_k$ over $K$, similar to before. Over Euclidean spaces, Iusem and Lara show that the sequence $\{x_k\}_k$ generated by this algorithm converges to an equilibrium point in the context of a suitable strongly quasiconvex and pseudomonotone bifunction:

\begin{theorem}[{cf.\ \cite[Corollary 3.2]{IusemLara2021}}]\label{ILconv2}
Let $X = \R^d$ and $K \subseteq X$ be closed and convex. Let $f:K\times K\to\mathbb{R}$ be a bifunction satisfying $(A1)$ -- $(A5)$. Take $\beta >0$ such that $\beta_k \geq \beta$ for all $k \in \N$ and let $\{x_k\}_k$ be the sequence generated by \eqref{preRS} over $K$. Then $\{x_k\}_k$ converges to a point $x^{*} \in S(K,f)$. 
\end{theorem}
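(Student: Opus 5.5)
Theorem \ref{ILconv2} is the Euclidean result of Iusem and Lara. Its proof in the paper is only a citation, but the plan would run parallel to the argument given for \eqref{PPA}. I would set $K=X=\R^d$ (or any closed convex $K$, restricting everything to $K$) and let $x^*$ be the equilibrium point. It exists by Theorem \ref{nonemptyness_thrm}, using $(A0)$ (derived from $(A2)$ and $(A5)$), $(A1)$, $(A2)$ and $(A4)$, and it is unique by Lemma \ref{uniqueness}.

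\textbf{Step 1 (Fej\'er monotonicity).} Take the iterate $x_{k+1}\in S(K,f_k)$ and test it with $y=x^*$:
\[
f(x_{k+1},x^*)+\tfrac{1}{\beta_k}\langle x_{k+1}-x_k,x^*-x_{k+1}\rangle\geq 0 .
\]
Since $f(x^*,x_{k+1})\geq 0$, pseudomonotonicity $(A2)$ gives $f(x_{k+1},x^*)\leq 0$. Hence $\langle x_{k+1}-x_k,x^*-x_{k+1}\rangle\geq 0$. Expanding the inner product yields the three-point inequality
\[
\norm{x_{k+1}-x^*}^2+\norm{x_{k+1}-x_k}^2\leq\norm{x_k-x^*}^2 .
\]
Consequently $\{x_k\}_k$ is bounded, $\norm{x_k-x^*}$ is nonincreasing, and $\sum_k\norm{x_{k+1}-x_k}^2<\infty$. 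In particular $x_{k+1}-x_k\to 0$, and this is quantitative in the sense of Lemma \ref{easy_folklore}.

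\textbf{Step 2 (asymptotic regularity forces convergence).} The idea is to exploit strong quasiconvexity of $f(x_{k+1},\cdot)$ on the segment $w_\lambda=(1-\lambda)x_{k+1}+\lambda x^*$. By $(A0)$, $f(x_{k+1},x_{k+1})=0$, and $f(x_{k+1},x^*)\leq 0$. So $(A4)$ gives
\[
f(x_{k+1},w_\lambda)\leq-\lambda(1-\lambda)\tfrac{\gamma}{2}\norm{x_{k+1}-x^*}^2 .
\]
On the other hand, the equilibrium property of $x_{k+1}$ for $f_k$, tested at $y=w_\lambda$, gives
\[
f(x_{k+1},w_\lambda)\geq-\tfrac{\lambda}{\beta_k}\langle x_{k+1}-x_k,x^*-x_{k+1}\rangle\geq-\tfrac{\lambda}{\beta}\norm{x_{k+1}-x_k}\,\norm{x^*-x_{k+1}} .
\]
Combining the two inequalities and dividing by $\lambda\norm{x_{k+1}-x^*}$ (when this is nonzero), then letting $\lambda\to 0$, yields
\[
\tfrac{\gamma}{2}\norm{x_{k+1}-x^*}\leq\tfrac{1}{\beta}\norm{x_{k+1}-x_k}.
\]
Since the right side tends to $0$, $x_k\to x^*$, and monotonicity gives a rate exactly as in the proof of Theorem \ref{THEOREM2}.

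\textbf{Main obstacle.} The expected difficulty is the well-definedness of the iteration, i.e.\ that $S(K,f_k)\neq\emptyset$. This does not follow from Theorem \ref{nonemptyness_thrm} directly: $f_k$ remains pseudomonotone only in a suitable sense, and the right-argument term $\tfrac{1}{\beta_k}\langle x-x_k,y-x\rangle$ is affine, so strong quasiconvexity of $f_k(x,\cdot)$ is not obviously preserved. The first thing to try is to check that strong quasiconvexity survives adding an affine function of $y$. If that fails, I would treat existence as a hypothesis, in the spirit of Remark \ref{firstAlgStrengthening} (cf.\ the caveat announced in Remark \ref{counter}). Notably, Steps 1--2 then use neither $(A3)$ nor $(A5)$.
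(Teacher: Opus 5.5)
Your proposal is correct and is essentially the paper's own argument for its Hadamard-space generalization, Theorem \ref{secondQuantThm}: the three-point Fej\'er inequality of Lemma \ref{algo2_fejer}, then asymptotic regularity via Lemma \ref{toolate}, then strong quasiconvexity of $f(x_{k+1},\cdot)$ along the segment from $x_{k+1}$ to $x^*$ combined with a Cauchy--Schwarz bound on the regularization term, with well-definedness of the iteration presupposed exactly as the paper does (your fallback is the right one, because the check you propose first fails: in Remark \ref{counter}, $f_{1,1}(0,\cdot)$ is $y\mapsto\sqrt{y}-y$, which is not even quasiconvex on $[0,2]$). The only deviation is that you let $\lambda\to 0$ instead of fixing $\lambda=1/2$, which sharpens the constant $\beta\gamma/4$ to $\beta\gamma/2$ (halving $D$) but changes nothing structurally.
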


In fact, in \cite[Proposition 3.6]{IusemLara2021}, Iusem and Lara further ``show'' that \eqref{preRS} is well-defined, i.e.\ that $S(K,f_k)\neq\emptyset$ for all $k\in\mathbb{N}$. However, we cannot confirm their result. The following remark provides a detailed discussion in that vein, and in particular provides a counterexample to their main line of reasoning.

\begin{remark}\label{counter}
In the proof of the well-definedness of the method generated by \eqref{preRS} under the assumptions of Theorem \ref{ILconv2} as given in \cite[Proposition 3.6]{IusemLara2021}, it is argued that the nonemptyness of $S(K,f_{z, \beta})$ for the function $f_{z, \beta}(x,\cdot) := f(x, \cdot) + \frac{1}{\beta}\langle x-z, \cdot -x \rangle$ (and hence the nonemptyness of $S(K,f_k)$) follows from the fact that $f_{z, \beta}(x,\cdot)$ attains its minimum on $K$ for any $z,x\in K$ and $\beta >0$. However it seems like this implication should not hold. Towards a counterexample, we utilize \cite[Example 4.2]{IusemLara2021}. Take $ \alpha > 0$ and $K = [0, 2]$. Define the bifunction $f: K \times K \rightarrow \R$ by 
\[
f(x,y) := \sqrt{y} - \sqrt{x} + \langle \alpha x , y - x\rangle.
\]
Note that $\langle a,b \rangle := ab$ for all $a,b \in \R$. If follows by \cite[Example 4.2]{IusemLara2021} that the assumptions $(A1)$ -- $(A6)$ hold for this function. Also, the function
\[
f_{z,\beta}(x,\cdot):= \sqrt{\cdot} - \sqrt{x} + \langle (\alpha +1/\beta)x -z/\beta , \cdot - x\rangle
\]
clearly attains a minimum on $K = [0, 2]$ for any $z,\beta$ and $x$. Now, consider $\beta = 1$ and $z = 1$. We will show that $S(K,f_{1,1})=\emptyset$. For a contradiction, assume that $x^{*} \in S(K,f_{1,1})$. Then by assumption
\[
f_{1,1}(x^{*},y) = \sqrt{y}-\sqrt{x^{*}} + \langle (1+\alpha) x^{*} -1, y - x^{*} \rangle  \geq 0
\]
for all $y \in K$. Suppose that $ x^{*} = 0$. Then, for $y = 2$, we obtain
\[
f_{1,1}(0,2) = \sqrt{2} + \langle-1, 2 \rangle = \sqrt{2} - 2 < 0,
\]
contradicting that $x^{*} \in S(K,f_{1,1})$. However if $x^{*} > 0$, we obtain for $y = 0$ that
\[
f_{1,1} (x^{*},0) = -\sqrt{x^{*}} + \left((1+\alpha)x^{*} - 1\right) (-x^{*}) = - \sqrt{x^{*}}-(1+\alpha)(x^{*})^2+ x^{*}.
\]
But now, if $x^{*} \leq 1$ we have $\sqrt{x^{*}} \geq x^{*}$ and if $x^{*} > 1$, we have $(x^{*})^2 > x^{*}$. Thus, the above expression is strictly negative for all choices of $x^{*} > 0$. We conclude that $S(K,f_{1,1})$ must be empty.
\end{remark}

Nevertheless, we now move on towards generalizing the method \eqref{preRS} and its convergence result contained in Theorem \ref{ILconv2} to the nonlinear setting of Hadamard spaces, and outfitting it with quantitative information. So, fix a Hadamard space $(X,d)$ and a bifunction $f:X\times X\to\mathbb{R}$. Similar to before, as closed and convex subsets of Hadamard spaces are again Hadamard spaces, we will (without loss of generality) not consider a relativization onto such sets.

There are now multiple ways to generalize the method \eqref{preRS}, based on different ways of regularizing $f$ in a Hadamard space: Concretely, two immediate regularizations corresponding to the above definition in Hilbert spaces are
\[
f_k(x,y) := f(x,y) + \frac{1}{\beta_k} \svv{x_k}{x}{x}{y}
\]
on the one hand, using the quasi-inner product on $X$, as well as
\[
f_k(x,y) := f(x,y) - \frac{1}{\beta_k} g_x(\log_xx_k,\log_x y)
\]
on the other, using the pseudo-inner product on $T_xX$. Yet another possible regularization utilizes the Busemann function of an associated geodesic ray: Supposing that $X$ has the geodesic extension property, one can consider
\[
f_k(x,y) := f(x,y) - \frac{1}{\beta_k} d(x_k,x)(b_{r_{x_k,x}}(y)-b_{r_{x_k,x}}(x)),
\]
where $b_{r_{x_k,x}}$ is the Busemann function corresponding to the (unique) geodesic ray $r_{x_k,x}$ such that $r_{x_k,x}(0)=x_k$ and $r_{x_k,x}(d(x_k,x))=x$ (which exists since $X$ has the geodesic extension property).\footnote{Since we rely on $X$ having the geodesic extension property for the last type of regularization based on Busemann functions, there would be a gain in generality if we would formulate our equilibrium problem and resulting method over a fixed closed and convex set $K\subseteq X$. However, since the approach we outline here can be trivially extended to such situations, and to not create notational mismatch between the methods, we refrain from introducing such a relativization in the following.}

Indeed, all regularizations coincide with \eqref{preRS} in Hilbert spaces. The first is employed in \cite{KHATIBZADEHMOHEBBI2021} in the context of Hadamard spaces. The second was previously taken as a point of departure in work of Colao, L\'opez, Marino and Mart{\'i}n-M{\'a}rquez \cite{COLAO201261} for equilibrium problems over Hadamard manifolds and is considered here (to our knowledge) for the first time in general Hadamard spaces. The last represents a regularization of a bifunction based on Busemann functions motivated by the recent work of Bento, Cruz Neto, Melo, et al.\ \cite{BentoCruzNetoLopesMeloFilho2024,BentoCruzNetoMelo2022}, also set over Hadamard manifolds, modified here to both adequately extend the resulting object to Hadamard spaces and to allow applications in the context of strongly quasiconvex functions. All have previously however only been considered for convex bifunctions $f$.

Based on this plurality, and to provide a uniform study of (at least) these three methods above, we in the following consider a more abstract regularized selection method via
\[
x_{k+1} \in S(f_k)\text{ for }f_k(x,y) := f(x,y) + \frac{1}{\beta_k} r(x_k,x,y),\tag{RS}\label{RS}
\]
where $r:X^3\to\mathbb{R}$ is an abstract regularization function for which we will assume two key properties for any $u,v,w\in X$:
\begin{enumerate}
\item[$(R1)$] $r(u,v,w)\leq\frac{1}{2}(d^2(w,u)-d^2(w,v)-d^2(u,v))$,
\item[$(R2)$] $r(u,v,w)\leq d(u,v)d(v,w)$.
\end{enumerate}

Before moving on, the next two lemmas quickly note that $(R1)$ and $(R2)$ hold for the two regularization functions used above.

\begin{lemma}
Define $r(u,v,w):=\svv{u}{v}{v}{w}$. Then $r$ satisfies $(R1)$ and $(R2)$.
\end{lemma}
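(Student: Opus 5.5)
Both properties follow directly from the definition of the quasi-inner product and the Cauchy--Schwarz characterization \eqref{cauchy}, so I plan to prove them separately by unfolding definitions.

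For $(R1)$, I will expand the definition. With $x=u$, $y=v$ and the second pair $(v,w)$,
\[
r(u,v,w)=\svv{u}{v}{v}{w}=\frac{1}{2}\big(d^2(u,w)+d^2(v,v)-d^2(u,v)-d^2(v,w)\big).
\]
Since $d(v,v)=0$ and $d$ is symmetric, this is exactly $\frac{1}{2}\big(d^2(w,u)-d^2(w,v)-d^2(u,v)\big)$. So $(R1)$ actually holds with equality.

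For $(R2)$, I will apply \eqref{cauchy} on the $\CAT$-space $X$ with the points $x=u$, $y=v$ and the pair $(v,w)$. This gives $\svv{u}{v}{v}{w}\leq d(u,v)d(v,w)$ immediately.

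Neither step is a real obstacle. The only thing to check is the index bookkeeping in the quasi-inner product formula, namely which distances appear with a plus sign.
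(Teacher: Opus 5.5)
Your proposal is correct and follows the paper's proof: $(R1)$ holds with equality by unfolding the definition of the quasi-inner product (your index bookkeeping is right, since the $d^2(v,v)$ term vanishes), and $(R2)$ is a direct instance of \eqref{cauchy}.
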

\begin{proof}
By definition, we have 
\[
\svv{u}{v}{v}{w}= \tfrac{1}{2}\left(d^2(w,u)-d^2(w,v)-d^2(u,v)\right)
\]
which yields $(R1)$. Further, by \eqref{cauchy}, we get $\svv{u}{v}{v}{w}\leq d(u,v)d(v,w)$ which is $(R2)$.
\end{proof}

\begin{lemma}
Define $r(u,v,w):=-g_v(\log_vu,\log_v w)$. Then $r$ satisfies $(R1)$ and $(R2)$.
\end{lemma}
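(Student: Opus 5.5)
For $(R1)$, the plan is to apply Lemma \ref{tangentCat0} with base point $x=v$, $a=u$, $b=w$ and $t=s=1$. This gives
\[
g_v(\log_v u,\log_v w)\geq \tfrac{1}{2}\big(d^2(v,u)+d^2(v,w)-d^2(u,w)\big).
\]
Multiplying by $-1$ then yields
\[
r(u,v,w)=-g_v(\log_vu,\log_vw)\leq \tfrac{1}{2}\big(d^2(w,u)-d^2(w,v)-d^2(u,v)\big),
\]
which is exactly $(R1)$. Since the lemma is stated for arbitrary $x,a,b\in X$, the degenerate cases $u=v$ or $w=v$ are already covered. As a sanity check, when $\log_v u=0_v$ the left side is $0$, and the right side is $\tfrac12(d^2(w,u)-d^2(w,u)-0)=0$, so the bound is consistent.

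For $(R2)$, I would use the Cauchy--Schwarz type bound on the tangent cone. By the discussion in Section \ref{tangent}, $g_v(p,q)=\svv{0_v}{p}{0_v}{q}_v$ on the Hadamard space $T_vX$, so \eqref{cauchy} gives $|g_v(p,q)|\leq \norm{p}_v\norm{q}_v$. The lower bound also holds, because applying \eqref{cauchy} with one vector reversed gives $-\svv{0_v}{p}{0_v}{q}_v=\svv{p}{0_v}{0_v}{q}_v\leq d_v(p,0_v)d_v(0_v,q)$. Alternatively, one can read it off directly from $g_v(t\gamma,s\eta)=ts\cos\angle_v(\gamma,\eta)\geq -ts$. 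Either way,
\[
r(u,v,w)=-g_v(\log_vu,\log_vw)\leq \norm{\log_vu}_v\norm{\log_vw}_v=d(v,u)\,d(v,w),
\]
using $\norm{\log_v a}_v=d(v,a)$ from the definition of $\log_v$. The case where one of the logarithms is $0_v$ is trivial, since then $g_v=0$.

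The only step needing care is the sign in $(R2)$. Cauchy--Schwarz bounds $g_v$ from above, but here we need a lower bound on $g_v$. I would handle this with the explicit cosine formula, which immediately gives $g_v\geq -\norm{\cdot}\norm{\cdot}$ and avoids any further work with the quasi-inner product identities.
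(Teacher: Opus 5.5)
Your proposal is correct and follows the paper's proof: $(R1)$ comes from Lemma \ref{tangentCat0} with $x=v$, $a=u$, $b=w$, $t=s=1$, and $(R2)$ comes from \eqref{cauchy} on the tangent cone $T_vX$. Your explicit treatment of the sign in $(R2)$ fills in a step the paper leaves implicit, since Section \ref{tangent} only records the upper bound $g_x\leq\norm{\cdot}_x\norm{\cdot}_x$; you handle it by applying \eqref{cauchy} with one vector reversed, or alternatively via $g_v(t\gamma,s\eta)=ts\cos\angle_v(\gamma,\eta)\geq -ts$.
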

\begin{proof}
Lemma \ref{tangentCat0} yields
\[
-g_v(\log_vu,\log_v w)\leq \tfrac{1}{2}\left(d^2(w,u)-d^2(w,v)-d^2(u,v)\right)
\]
which is $(R1)$. Further, recall from Section \ref{tangent} that
\[
-g_v(\log_vu,\log_v w)\leq \norm{\log_vu}_v\norm{\log_v w}_v=d(u,v)d(v,w)
\]
by \eqref{cauchy} on $T_xX$, which is $(R2)$.
\end{proof}

To deal with the last regularization, we first require the following result about Busemann functions.

\begin{lemma}\label{lem:BusemannInequality}
Let $X$ have the geodesic extension property. Then, for any $u,v,w\in X$:
\[
d(u,v)b_{r_{u,v}}(w)\geq\frac{1}{2}\left(d^2(w,v)-d^2(w,u)-d^2(u,v)\right).
\]
\end{lemma}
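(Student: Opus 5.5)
The plan is to compare the point $v$ with points far out on the ray, using \eqref{CN} on the geodesic from $u$ to $r(t)$, and then let $t\to\infty$. Write $s:=d(u,v)$ and $r:=r_{u,v}$.

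\textbf{The degenerate case $s=0$.} Here the left-hand side is $0$, whatever convention is used for $b_{r_{u,u}}$. The right-hand side is $\frac{1}{2}(d^2(w,u)-d^2(w,u)-0)=0$. So the inequality holds trivially, and we assume $s>0$ from now on.

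\textbf{Applying \eqref{CN}.} Fix $t>s$. Since $r$ is an isometry with $r(0)=u$ and $r(s)=v$, the restriction $r|_{[0,t]}$ is the unique geodesic joining $u$ to $r(t)$. It therefore passes through $v=(1-\lambda)u\oplus\lambda r(t)$ with $\lambda:=s/t\in(0,1)$. Applying \eqref{CN} to this geodesic with base point $w$ gives
\[
d^2(v,w)\leq (1-\lambda)d^2(u,w)+\lambda d^2(r(t),w)-\lambda(1-\lambda)t^2 .
\]
Dividing by $\lambda$ and using $(1-\lambda)t^2=t^2-st$, this rearranges to
\[
d^2(r(t),w)-t^2\geq \frac{t}{s}\big(d^2(v,w)-d^2(u,w)\big)+d^2(u,w)-st .
\]

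\textbf{Passing to the limit.} Factor the left-hand side as $(d(r(t),w)-t)(d(r(t),w)+t)$ and divide the inequality by $t$. This yields
\[
\big(d(r(t),w)-t\big)\cdot\frac{d(r(t),w)+t}{t}\geq \frac{1}{s}\big(d^2(v,w)-d^2(u,w)\big)-s+\frac{d^2(u,w)}{t}.
\]
Now let $t\to\infty$. The first factor on the left converges to $b_r(w)$ by the definition of the Busemann function, whose limit exists since $t\mapsto d(w,r(t))-t$ is nonincreasing and bounded below by $-d(w,u)$. By the triangle inequality, $|d(r(t),w)-t|\leq d(u,w)$, so the second factor converges to $2$. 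On the right-hand side, the term $d^2(u,w)/t$ tends to $0$. Hence
\[
2b_r(w)\geq \frac{1}{s}\big(d^2(w,v)-d^2(w,u)-s^2\big),
\]
and multiplying by $s/2$ gives the claim.

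The only delicate point is the limit step: one must confirm that the Busemann limit exists and that the factor $(d(r(t),w)+t)/t$ converges to $2$. Both follow directly from the triangle inequality as indicated.
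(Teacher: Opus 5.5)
Your proof is correct and follows essentially the same route as the paper: apply \eqref{CN} to the initial segment of the ray from $u$ to $r(t)$, on which $v$ sits at parameter $d(u,v)/t$, rearrange, and let $t\to\infty$. The only differences are in the limit step and the degenerate case: your factorization $d^2(r(t),w)-t^2=(d(r(t),w)-t)(d(r(t),w)+t)$ with $(d(r(t),w)+t)/t\to 2$ is slightly cleaner than the paper's square-root estimate, which tacitly needs $tc+d^2(w,u)\geq 0$, and you also handle $u=v$ explicitly.
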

\begin{proof}
To simplify notation, write $r$ for $r_{u,v}$ and $a$ for $d(u,v)$. Given $t>a$, note that $v=(1-\frac{a}{t})u\oplus \frac{a}{t}r(t)$. Hence, using \eqref{CN}, we get
\[
d^2(w,v)\leq \left(1-\frac{a}{t}\right)d^2(w,u)+\frac{a}{t}d^2(w,r(t))-\frac{a}{t}\left(1-\frac{a}{t}\right)d^2(u,r(t)).
\]
As $r$ is a geodesic ray issuing at $u$, we have $d^2(u,r(t))=t^2$ and so $\frac{a}{t}(1-\frac{a}{t})t^2=a(t-a)$. Rearranging the above and multiplying by $\frac{t}{a}$, we get
\begin{align*}
d^2(w,r(t))&\geq \frac{t}{a}\left(d^2(w,v)-d^2(w,u)\right)+d^2(w,u)+t(t-a)\\
&\geq t^2+ tc+d^2(w,u)
\end{align*}
where $c=\frac{1}{a}\left(d^2(w,v)-d^2(w,u)-a^2\right)$. Hence, we get
\begin{align*}
d(w,r(t))-t&\geq \sqrt{t^2+ tc+d^2(w,u)}-t\\
&=\frac{tc+d^2(w,u)}{\sqrt{t^2+ tc+d^2(w,u)}+t}\geq \frac{tc+d^2(w,u)}{2t+\sqrt{tc+d^2(w,u)}}
\end{align*}
so that we get $b_{r_{u,v}}(w)=\lim_{t\to\infty}(d(w,r(t))-t)\geq\frac{c}{2}$. Multiplying by $a$ gives the result.
\end{proof}

Now, we can recognize the last type of regularization as an instance of our abstract schema.

\begin{lemma}
Let $X$ have the geodesic extension property. Define $r(u,v,w):=-d(u,v)(b_{r_{u,v}}(w)-b_{r_{u,v}}(v))$. Then $r$ satisfies $(R1)$ and $(R2)$.
\end{lemma}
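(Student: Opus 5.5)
Write $r$ for the geodesic ray $r_{u,v}$ and $a:=d(u,v)$. We treat the degenerate case $u=v$ separately: there $a=0$, so $r(u,v,w)=0$. Then $(R1)$ reads $0\leq\frac12(d^2(w,u)-d^2(w,u)-0)=0$ and $(R2)$ reads $0\leq 0$, so both hold trivially. (The ray is not uniquely defined in this case, but the prefactor $a=0$ kills it.) We may therefore assume $u\neq v$.

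The first step is to compute $b_r(v)$ exactly. Since $r$ is a geodesic ray with $r(0)=u$ and $r(a)=v$, we have $d(v,r(t))=t-a$ for all $t\geq a$. Hence $b_r(v)=\lim_{t\to\infty}(t-a-t)=-a$, and so
\[
r(u,v,w)=-a\,b_r(w)-a^2 .
\]

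For $(R1)$, we apply Lemma \ref{lem:BusemannInequality}, which gives
\[
a\,b_r(w)\geq\tfrac12\left(d^2(w,v)-d^2(w,u)-a^2\right).
\]
Substituting this into the formula above yields
\[
r(u,v,w)\leq -\tfrac12\left(d^2(w,v)-d^2(w,u)-a^2\right)-a^2=\tfrac12\left(d^2(w,u)-d^2(w,v)-d^2(u,v)\right),
\]
which is exactly $(R1)$.

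For $(R2)$, we use that $b_r$ is $1$-Lipschitz, as recalled in the preliminaries. This gives $b_r(w)-b_r(v)\geq -d(v,w)$, and therefore
\[
r(u,v,w)=-a\,(b_r(w)-b_r(v))\leq a\,d(v,w)=d(u,v)d(v,w).
\]
No step here is a real obstacle. The only points needing care are the bookkeeping of the value $b_r(v)=-a$, which is what makes the $-a^2$ term cancel correctly in $(R1)$, and the degenerate case $u=v$.
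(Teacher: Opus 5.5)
Your proof is correct and matches the paper's argument: compute $b_{r_{u,v}}(v)=-d(u,v)$, then use Lemma \ref{lem:BusemannInequality} for $(R1)$ and the nonexpansiveness of the Busemann function for $(R2)$. Your separate treatment of the degenerate case $u=v$ is worth keeping. The paper leaves that case implicit, even though the proof of Lemma \ref{lem:BusemannInequality} divides by $d(u,v)$.
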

\begin{proof}
It is easy to see that $b_{r_{u,v}}(v)=-d(u,v)$. Hence, using Lemma \ref{lem:BusemannInequality}, we get 
\begin{align*}
d(u,v)(b_{r_{u,v}}(w)-b_{r_{u,v}}(v))&=d(u,v)b_{r_{u,v}}(w)+d^2(u,v)\\
&\geq\frac{1}{2}\left(d^2(w,v)-d^2(w,u)-d^2(u,v)\right)+d^2(u,v)\\
&\geq\frac{1}{2}\left(d^2(w,v)-d^2(w,u)+d^2(u,v)\right)
\end{align*}
so that 
\[
r(u,v,w)\leq \frac{1}{2}\left(d^2(w,u)-d^2(w,v)-d^2(u,v)\right)
\]
which is $(R1)$. For $(R2)$, note that we have 
\[
-d(u,v)(b_{r_{u,v}}(w)-b_{r_{u,v}}(v))=d(u,v)(b_{r_{u,v}}(v)-b_{r_{u,v}}(w))\leq d(u,v)d(v,w)
\]
since $b_{r_{u,v}}$ is nonexpansive (recall Section \ref{subsec_Hadmard_spaces}).
\end{proof}

In comparison to \eqref{preRS}, where the well-definedness of the algorithm was already unclear even in the linear case for strongly quasiconvex functions, this abstract phrasing via \eqref{RS} makes the well-definedness of the method even more precarious, especially in Hadamard spaces and already for convex bifunctions. We will hence in the following generally only assume (and not show) that $S(f_k)\neq\emptyset$ for all $k \in \N$ in the Hadamard space setting, so that the method \eqref{RS} is well-defined. However, we briefly comment on some partial results in that vein in Remark \ref{rem:wellDef} below.

In that way, although the proof of the main convergence theorem and its corresponding quantitative estimates given below (see Theorem \ref{secondQuantThm}) are simpler than their counterparts for \eqref{PPA} from the previous section (recall Theorems \ref{THEOREM2} and \ref{THEOREMa}), the great disadvantage of \eqref{RS} is that it is less clear whether the algorithm is well-defined, and it further does not seem immediate in what respect the corresponding subproblems are easy to solve, as they lack important structure (e.g.\ w.r.t.\ quasiconvexity).

\begin{remark}\label{rem:wellDef}
In \cite{KHATIBZADEHMOHEBBI2021}, Khatibzadeh and Mohebbi consider well-definedness of \eqref{RS} in Hadamard spaces for $r(u,v,w):=\svv{u}{v}{v}{w}$, but where $f$ is convex in its second argument. They encounter a different difficulty compared to the above Remark \ref{counter} in this nonlinear context: They can show that $S(f)$ is nonempty if $f$ is convex in its right argument, is $\theta$-undermonotone (see \cite[p.\ 233]{KHATIBZADEHMOHEBBI2021}) and satisfies $(A0)$ and $(A1)$. However, as the function $y \mapsto \svv{z}{x}{x}{y}$ is not necessarily convex in general Hadamard spaces, neither is $f_{z,\beta}(x,\cdot):=f(x, \cdot) + \frac{1}{\beta}r(z,x,\cdot)$ for the above $r$. Thus, the previous arguments, reliant on the convexity property of $f$, cannot be applied to show nonemptyness of $S(f_{z,\beta})$.  Khatibzadeh and Mohebbi leave it as an open question whether the algorithm is well-defined only assuming $(A0)$, $(A1)$, convexity and $\theta$-undermonotonicity of $f$ and only show nonemptyness of $S(f_{z,\beta})$ under additional assumptions on $f$ (e.g.\ if $f$ is cyclically monotone). It was claimed in \cite[Proposition 2.9]{COLAO201261} (as well as \cite[Proposition 3.4, (ii)]{PapaQuirozOliveira2009}) that the issue of the convexity of the regularization function $r$ is mitigated for $r(u,v,w):=-g_v(\log_vu,\log_v w)$ over Hadamard manifolds, which was used in \cite[Theorem 4.9]{COLAO201261} to establish that $S(f_{z,\beta})$ is nonempty whenever $f$ convex in its right argument, monotone and satisfies some further auxiliary conditions (including a compactness condition). However, as shown in \cite{KristalyLiLopezAcedoNicolae2016}, these results from \cite{COLAO201261,PapaQuirozOliveira2009} are actually false in general and the function $r$ is only convex over linear spaces. The last type of regularization $r(u,v,w):=-d(u,v)(b_{r_{u,v}}(w)-b_{r_{u,v}}(v))$ is actually concave in $w$ since the Busemann function is convex (recall Section \ref{subsec_Hadmard_spaces}). This is contrary to the regularization $d(u,v)b_{r_{v,u}}(w)$ introduced in \cite{BentoCruzNetoLopesMeloFilho2024}, which served as a motivation for the above regularization introduced in this paper. Indeed, the convexity of this regularization is crucially used therein to establish the well-definedness of the method (cf.\ \cite[Theorem 1.1]{BentoCruzNetoLopesMeloFilho2024}). In any case, such convexity results for $r$ would not immediately solve the associated problem in the strongly quasiconvex case as the addition of a strongly quasiconvex and a convex function need not be strongly quasiconvex, so that not even Theorem \ref{nonemptyness_thrm} can be applied without additional considerations. Naturally, if $f_{z,\beta}$ happens to be strongly quasiconvex for all $\beta > 0$ and $z \in X$ in its right argument and further satisfies $(A0)$,$(A1)$ and $(A2)$, then Theorem \ref{nonemptyness_thrm} of course guarantees $S(f_{z,\beta}) \neq \emptyset$.
\end{remark} 

\subsection{Quantitative convergence results}

We now turn to the main focus of this section, the generalized and effective variant of Theorem \ref{ILconv2}. In \cite{IusemLara2021}, Iusem and Lara refer\footnote{They actually reference \cite{IusemSosa03}, but we believe this might be a typo.} to \cite[Theorem 1]{IusemSosa10} where, in a Hilbert space setting, weak convergence of the sequence generated by \eqref{preRS} is shown under the assumptions that $S(f) \neq \emptyset$ and that $f$ is suitably semicontinuous, monotone and convex. They then argue that this can easily be transferred to the strongly quasiconvex setting of Theorem \ref{ILconv2}. Inspired by \cite{Pischke2025}, we follow a slightly different argument which in particular will allow us to show a strong convergence result together with a rate of convergence in the general setting of strongly quasiconvex and pseudomonotone bifunctions on Hadamard spaces.

We begin by adapting \cite[Proposition 4]{IusemSosa10} to our setting, which will allow us to show Fej\'er monotonicity of $\{x_k\}_k$ and derive a quantitative result on the asymptotic behavior of $d(x_{k+1}, x_k)$. The proof is an easy adaptation of the proof of \cite[Proposition 4]{IusemSosa10}.

\begin{lemma}\label{algo2_fejer}
Let $f$ be a bifunction satisfying $(A2)$ and let $r$ satisfy $(R1)$. Let $\{\beta_k\}_k\subseteq (0,\infty)$. Suppose that \eqref{RS} is well-defined and let $\{x_k\}_k$ be its generated sequence. Let $x^{*} \in S(f)$. Then, for every $k \in \N$:
\[
d^2(x_{k+1}, x^{*}) \leq d^2(x_k,x^{*}) - d^2(x_{k+1}, x_k).
\]
\end{lemma}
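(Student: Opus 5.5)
Fix $k\in\N$. Since $x_{k+1}\in S(f_k)$, testing the defining inequality of $S(f_k)$ at $y=x^*$ gives
\[
0\leq f_k(x_{k+1},x^*)=f(x_{k+1},x^*)+\frac{1}{\beta_k}r(x_k,x_{k+1},x^*).
\]
The plan is to bound the two summands on the right separately and then rearrange.

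For the first summand we use that $x^*\in S(f)$, so $f(x^*,x_{k+1})\geq 0$. Pseudomonotonicity $(A2)$ then yields $f(x_{k+1},x^*)\leq 0$. Combined with the display above, this gives $0\leq \frac{1}{\beta_k}r(x_k,x_{k+1},x^*)$, and since $\beta_k>0$, $r(x_k,x_{k+1},x^*)\geq 0$.

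For the second summand we invoke $(R1)$ with $u=x_k$, $v=x_{k+1}$ and $w=x^*$:
\[
0\leq r(x_k,x_{k+1},x^*)\leq \tfrac{1}{2}\big(d^2(x^*,x_k)-d^2(x^*,x_{k+1})-d^2(x_k,x_{k+1})\big).
\]
Multiplying by $2$ and rearranging gives $d^2(x_{k+1},x^*)\leq d^2(x_k,x^*)-d^2(x_{k+1},x_k)$, which is the claim.

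There is no real obstacle here. The only point needing care is that $(A2)$ is applied to the pair $(x^*,x_{k+1})$ in the correct order, and that $(R1)$ is instantiated with the correct roles of $u,v,w$. No quasiconvexity or Lipschitz assumption on $f$ is needed, consistent with the hypotheses of the lemma.
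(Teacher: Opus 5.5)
Your proof is correct and follows the paper's argument essentially verbatim. Both test $x_{k+1}\in S(f_k)$ at $y=x^*$, apply $(A2)$ to $f(x^*,x_{k+1})\geq 0$ to obtain $f(x_{k+1},x^*)\leq 0$, and then use $(R1)$ with $(u,v,w)=(x_k,x_{k+1},x^*)$ together with $\beta_k>0$ to rearrange into the claimed inequality.
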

\begin{proof}
Fix $k \in \N$. As $ x_{k+1} \in S(f_k)$, we obtain
\[
0 \leq f_k(x_{k+1},x^{*}) = f(x_{k+1},x^{*}) + \frac{1}{\beta_k} r(x_k,x_{k+1},x^*)
\]
and thus $- f(x_{k+1},x^{*}) \leq \frac{1}{\beta_k} r(x_k,x_{k+1},x^*)$. As $x^{*}$ is an equilibrium point for $f$, we have $f(x^{*}, y) \geq 0$ for all $y \in X$, and by pseudomonotonicity of $f$ it follows that $f(y, x^{*}) \leq 0$ for all $y \in X$. In particular, we have $f(x_{k+1}, x^{*}) \leq 0$, and hence
\[
0 \leq \frac{1}{\beta_k} r(x_k,x_{k+1},x^*) \leq \frac{1}{2\beta_k}\big(d^2(x^{*}, x_k) - d^2(x^{*}, x_{k+1}) - d^2(x_{k+1}, x_k)\big),
\]
using $(R1)$. As $\beta_k > 0$ for all $k \in \N$, this implies the result.
\end{proof}

The approach towards our quantitative convergence theorem for \eqref{RS} is now similar as with \eqref{PPA}. First, we derive an effective estimate for the asymptotic behavior of $d(x_k, x_{k+1})$ from the above.

\begin{lemma}\label{toolate}
Let $f$ be a bifunction satisfying $(A2)$ and let $r$ satisfy $(R1)$.  Let $\{\beta_k\}_k\subseteq (0,\infty)$. Suppose that \eqref{RS} is well-defined and let $\{x_k\}_k$ be its generated sequence. Let $x^{*} \in S(f)$. Then, for any $\varepsilon > 0$:
\begin{equation*}
\exists k \leq \left\lceil \frac{b^2}{\varepsilon} \right \rceil \left(d^2(x_k,x_{k+1}) < \varepsilon\right),
\end{equation*}
where $b^2 \geq d^2(x_0, x^{*})$.
\end{lemma}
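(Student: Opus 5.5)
The plan is to derive the claim directly from Lemma \ref{algo2_fejer}, via the abstract result Lemma \ref{easy_folklore} or, more transparently, a telescoping argument. Set $a_k := d^2(x_k,x^*)$. By Lemma \ref{algo2_fejer}, for every $k\in\N$ we have
\[
d^2(x_{k+1},x_k) \leq a_k - a_{k+1},
\]
and in particular $\{a_k\}_k$ is a nonincreasing sequence of nonnegative reals with $a_0\leq b^2$.

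I would first try to obtain the statement as an instance of Lemma \ref{easy_folklore}. Choose $c_k = d_k = 1$ for all $k$ and $u=1$, and use $b^2$ in place of the bound $b$ there. The lemma then yields, for any $\varepsilon>0$, some $k\leq \lceil b^2/\varepsilon\rceil$ with $a_k - a_{k+1}<\varepsilon$. Combined with the displayed inequality, this gives $d^2(x_k,x_{k+1})<\varepsilon$, which is exactly the claim.

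If one prefers not to rely on the exact form of \cite[Lemma 4.2]{Pischke2025} behind Lemma \ref{easy_folklore}, the argument can be done by hand. Suppose, for contradiction, that $d^2(x_k,x_{k+1})\geq\varepsilon$ for all $k\leq N:=\lceil b^2/\varepsilon\rceil$. Summing the inequality from Lemma \ref{algo2_fejer} over $k=0,\dots,N$ telescopes to
\[
(N+1)\varepsilon \leq a_0 - a_{N+1} \leq b^2 .
\]
However, $(N+1)\varepsilon > b^2$, which is a contradiction.

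There is no real obstacle here. The only point to check is the off-by-one bookkeeping in the index bound: the sum has $N+1$ terms, so the bound $\lceil b^2/\varepsilon\rceil$ suffices and there is even some slack.
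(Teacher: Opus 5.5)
Your proposal is correct and matches the paper's proof, which derives the claim directly from Lemma \ref{algo2_fejer} and Lemma \ref{easy_folklore}, implicitly with $a_k=d^2(x_k,x^*)$, $c_k=d_k=u=1$ and $b^2$ as the bound on $a_0$. Your telescoping alternative is also correct; it simply spells out the content of that abstract lemma in this special case.
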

\begin{proof}
Immediately follows with Lemma \ref{easy_folklore} and Lemma \ref{algo2_fejer}.
\end{proof}

The above asymptotic estimate for $d(x_k, x_{k+1})$ can now already be used to show the main quantitative convergence theorem.

\begin{theorem}\label{secondQuantThm}
Let $X$ be a Hadamard space and let $f$ be a bifunction satisfying $(A2)$, $(A4)$ and $(A5)$. Further, let $r$ satisfy $(R1)$ and $(R2)$. Let $\{\beta_k\}_k \subseteq (0,\infty)$  be a sequence such that $\beta \leq \beta_k$ for all $k \in \N$. Suppose that \eqref{RS} is well-defined and let $\{x_k\}_k$ be its generated sequence. Suppose that $S(f) \neq \emptyset$. Then $\{x_k\}_k$ converges to the (unique) equilibrium point $x^{*}$ of $f$. Further, we have the non-asymptotic guarantee 
\begin{equation*}
\forall k > 2 \left(d(x_k, x^{*}) < \frac{D}{\sqrt{k-2}}\right)
\end{equation*}
where $D := 4b/\beta \gamma$ with $b^2 \geq d^2(x_0,x^{*})$.
\end{theorem}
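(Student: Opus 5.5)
The plan mirrors the proof of Theorem \ref{THEOREM2}, but it should be much simpler because of the Fej\'er inequality of Lemma \ref{algo2_fejer} and the structure $(R2)$. I will first prove the metastable form
\[
\forall \varepsilon>0\ \forall k\geq \left\lceil \frac{D^2}{\varepsilon^2}\right\rceil+1\ \big(d(x_k,x^*)<\varepsilon\big),
\]
and then obtain the non-asymptotic guarantee by setting $\varepsilon:=D/\sqrt{k-2}$ for $k>2$ and rearranging. Convergence follows immediately from the quantitative statement, and uniqueness of $x^*$ comes from Lemma \ref{uniqueness}. For that lemma I use that $(A0)$ follows from $(A2)$ and $(A5)$, as noted in Section \ref{the_existence_issue}.

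\textbf{Key inequality.} I aim to show $d(x_{k+1},x^*)\leq \frac{2}{\beta\gamma}d(x_k,x_{k+1})$ for every $k$. Since $x_{k+1}\in S(f_k)$, I test the equilibrium condition at $w_\lambda:=(1-\lambda)x_{k+1}\oplus\lambda x^*$ with $\lambda\in(0,1)$. This gives
\[
0\leq f(x_{k+1},w_\lambda)+\tfrac{1}{\beta_k}r(x_k,x_{k+1},w_\lambda).
\]
I bound the first term with $(A4)$:
\[
f(x_{k+1},w_\lambda)\leq \max\{f(x_{k+1},x_{k+1}),f(x_{k+1},x^*)\}-\lambda(1-\lambda)\tfrac{\gamma}{2}d^2(x_{k+1},x^*).
\]
The maximum is $\leq 0$: $f(x_{k+1},x_{k+1})=0$ by $(A0)$, and $f(x_{k+1},x^*)\leq 0$ by pseudomonotonicity, since $x^*\in S(f)$. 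I bound the regularization term with $(R2)$, using $d(x_{k+1},w_\lambda)=\lambda d(x_{k+1},x^*)$:
\[
r(x_k,x_{k+1},w_\lambda)\leq \lambda\, d(x_k,x_{k+1})d(x_{k+1},x^*).
\]
Dividing by $\lambda d(x_{k+1},x^*)$ (the case $d(x_{k+1},x^*)=0$ is trivial), using $\beta_k\geq\beta$ and letting $\lambda\to 0$ yields the key inequality.

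\textbf{Rate.} Given $\varepsilon>0$, I apply Lemma \ref{toolate} with the tolerance $\beta^2\gamma^2\varepsilon^2/4$ for $d^2(x_k,x_{k+1})$. This produces some $k\leq\lceil 4b^2/(\beta^2\gamma^2\varepsilon^2)\rceil\leq \lceil D^2/\varepsilon^2\rceil$ with $d(x_k,x_{k+1})<\beta\gamma\varepsilon/2$. The key inequality then gives $d(x_{k+1},x^*)<\varepsilon$. By the Fej\'er monotonicity from Lemma \ref{algo2_fejer}, $d(x_j,x^*)<\varepsilon$ for all $j\geq k+1$, which is the metastable statement.

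The only delicate step is the key inequality. One has to test at the geodesic point $w_\lambda$ rather than at $x^*$ itself, so that strong quasiconvexity produces a quadratic gain. One also has to check that $(R2)$ contributes only a term linear in $\lambda$, so that dividing by $\lambda$ and letting $\lambda\to0$ leaves the positive factor $\gamma/2$. Note that $(R1)$ is not used in this step; it enters only through Lemma \ref{algo2_fejer} and Lemma \ref{toolate}.
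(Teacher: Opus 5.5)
Your proof is correct and follows the paper's argument step for step: Lemma \ref{toolate} gives a small step $d(x_k,x_{k+1})$, you test the regularized equilibrium condition at $(1-\lambda)x_{k+1}\oplus\lambda x^*$ using $(A4)$, $(A0)$, pseudomonotonicity and $(R2)$, and you conclude with the Fej\'er monotonicity from Lemma \ref{algo2_fejer}. The only difference is that you let $\lambda\to 0$ where the paper fixes $\lambda=\frac{1}{2}$. This gives the sharper inequality $\frac{\beta\gamma}{2}d(x_{k+1},x^*)\leq d(x_{k+1},x_k)$ instead of $\frac{\beta\gamma}{4}d(x_{k+1},x^*)\leq d(x_{k+1},x_k)$, so the constant $D$ could even be halved.
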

\begin{proof}
It is enough to show the quantitative result, and to that end we show
\[
\forall \varepsilon > 0 \ \forall k \geq \left( \left\lceil \frac{D^2}{\varepsilon^2} \right\rceil + 1\right) (d(x_k,x^{*}) < \varepsilon).\tag{Q'}\label{secondQuant}
\]
The non-asymptotic guarantee above can then be derived similar as in Theorem \ref{THEOREM2}. Given $\varepsilon > 0$, using Lemma \ref{toolate}, choose
\begin{equation*}
	k \leq \left\lceil \frac{16 b^2}{\beta^2 \gamma^2\varepsilon^2} \right \rceil\leq\left\lceil\frac{D^2}{\varepsilon^2}\right\rceil  \text{ such that } d(x_{k+1}, x_k) < \frac{\beta\gamma}{4}\varepsilon.
\end{equation*}
Using that $x_{k+1} \in S(f_k)$, by definition we obtain
\[
0 \leq f(x_{k+1}, y) + \frac{1}{\beta_k} r(x_k,x_{k+1},y)
\]
for all $y \in X$. Consider $y := (1-\lambda) x_{k+1}\oplus \lambda x^{*}$ where $\lambda \in (0,1)$. Then
\[
0 \leq f(x_{k+1}, (1-\lambda) x_{k+1}\oplus \lambda x^{*}) + \frac{1}{\beta_k} r(x_k,x_{k+1}, (1-\lambda)x_{k+1}\oplus \lambda x^{*})
\]
By strong quasiconvexity of $f$ and $(R2)$, we obtain
\begin{align*}
0 &\leq \mathrm{max}\big\{f(x_{k+1},x^{*}), f(x_{k+1}, x_{k+1})\big\} - \lambda(1-\lambda)\frac{\gamma}{2}d^2(x^{*},x_{k+1})\\
&\hphantom{\leq \mbox{}}+\frac{1}{\beta_k}d(x_{k+1},x_k)d((1-\lambda) x_{k+1}\oplus \lambda x^{*} ,x_{k+1}).
\end{align*}
As $x^{*}$ is an equilibrium point for $f$, we know $f(x^{*},y) \geq 0$ for all $y \in X$ and thus by pseudomonotonicity of $f$, we in particular have $f(x_{k+1},x^{*}) \leq 0$. Further, $f(x_{k+1},x_{k+1}) = 0$ by $(A0)$ (which follows from $(A2)$ and $(A5)$). Thus, using convexity of $d$ in its first argument, we obtain
\[
\lambda(1-\lambda)\frac{\gamma}{2}d^2(x^{*}, x_{k+1}) \leq \frac{\lambda}{\beta_k} d(x_{k+1}, x_k)  d(x^{*},x_{k+1}).
\]
Choosing $\lambda = 1/2$ and using $\beta \leq \beta_k$, this now finally yields 
\[
\frac{\beta \gamma}{4} d^2(x^{*}, x_{k+1}) \leq d(x_{k+1},x_k)d(x^{*},x_{k+1}).
\]
If $d(x^{*}, x_{k+1}) = 0$, we are done by Lemma \ref{algo2_fejer}. Otherwise, we obtain
\[
\frac{\beta \gamma}{4} d(x^{*}, x_{k+1}) \leq d(x_{k+1},x_k) < \frac{\beta \gamma}{4} \varepsilon.
\]
As $\frac{\beta \gamma}{4} > 0$, we get $d(x^{*}, x_{k+1}) <  \varepsilon$ and again we are done with Lemma \ref{algo2_fejer}. This completes the proof of \eqref{secondQuant}.
\end{proof}

\begin{remark}\label{secondAlgStrengthening}
Already over $\mathbb{R}^d$, the above result strengthens the convergence result of Iusem and Lara (recall Theorem \ref{ILconv2}) by providing quantitative estimates, which (to our knowledge) are already novel in the Euclidean case. Again, the above approach allows for a careful examination of the main assumptions on the bifunction, whereby $(A1)$ and $(A3)$ are not needed to show the convergence result if the existence of an equilibrium point and well-definedness of the algorithm are assumed.
\end{remark}

Further, both Remark \ref{rem:Fejer} and Remark \ref{rem:Logic} extend from \eqref{PPA} to this setting of \eqref{RS}, and we again refer to the master thesis of the first author \cite{Despres2026} for additional discussions in these regards.\\

\noindent{\textbf{Acknowledgments:}} This paper is a revised version of parts of the master thesis \cite{Despres2026} of the first author, written under the supervision of Prof.\ Dr.\ Ulrich Kohlenbach at TU Darmstadt and co-supervised by the second author. Both authors want to thank Prof.\ Kohlenbach for many insightful comments on the topic of the thesis \cite{Despres2026}. The authors also want to thank Sorin-Mihai Grad and Felipe Lara for helpful conversations on and around the topic of this paper.

\bibliographystyle{plain}
\bibliography{ref}

\begin{thebibliography}{10}

\bibitem{Aleksandrov51}
A.D. Aleksandrov.
\newblock {A theorem on triangles in a metric space and some of its
  applications}.
\newblock {\em Proceedings of the Steklov Institute of Mathematics}, 38:5--23,
  1951.

\bibitem{Alexandrov57}
A.D. Aleksandrov.
\newblock {\"Uber eine Verallgemeinerung der Riemannschen Geometrie}.
\newblock {\em Schriftenreihe des Forschungsinstituts f\"ur Mathematik},
  1:33--84, 1957.

\bibitem{AnsariBabuRaju2025}
Q.H. Ansari, F.~Babu, and M.S. Raju.
\newblock {Proximal point method with Bregman distance for quasiconvex
  pseudomonotone equilibrium problems}.
\newblock {\em Optimization}, 74(9):2035--2055, 2025.

\bibitem{BauschCom2017}
H.H. Bauschke and P.L. Combettes.
\newblock {\em {Convex Analysis and Monotone Operator Theory in Hilbert
  Spaces}}.
\newblock CMS Books in Mathematics. Springer, 2nd edition, 2017.

\bibitem{Bac2013}
M.~Ba\v{c}\'ak.
\newblock {The proximal point algorithm in metric spaces}.
\newblock {\em Israel Journal of Mathematics}, 194:689--701, 2013.

\bibitem{Bacak2014}
M.~Ba\v{c}\'ak.
\newblock {\em {Convex Analysis and Optimization in Hadamard Spaces}}.
\newblock Series in Nonlinear Analysis and Applications. De Gruyter, Berlin,
  2014.

\bibitem{Bacak23}
M.~Ba\v{c}\'ak.
\newblock {Old and new challenges in Hadamard spaces}.
\newblock {\em Japanese Journal of Mathematics}, 18(2):115--168, 2023.

\bibitem{BentoCruzNetoLopesMeloFilho2024}
G.d.C. Bento, J.X.~Cruz Neto, J.O. Lopes, \'I.D.L. Melo, and P.S. Filho.
\newblock {A New Approach About Equilibrium Problems via Busemann Functions}.
\newblock {\em Journal of Optimization Theory and Applications},
  200(1):428--436, 2024.

\bibitem{BentoCruzNetoMelo2022}
G.d.C. Bento, J.X.~Cruz Neto, and \'I.D.L. Melo.
\newblock {Combinatorial Convexity in Hadamard Manifolds: Existence for
  Equilibrium Problems}.
\newblock {\em Journal of Optimization Theory and Applications},
  195(3):1087--1105, 2022.

\bibitem{BergNikolaev2008}
I.D. Berg and I.G. Nikolaev.
\newblock {Quasilinearization and curvature of Aleksandrov spaces}.
\newblock {\em Geometriae Dedicata}, 133(1):195--218, 2008.

\bibitem{Blum1994}
E.~Blum and W.~Oettli.
\newblock {From optimization and variational inequalities to equilibrium
  problems}.
\newblock {\em Mathematics Student}, 63(1):123--145, 1994.

\bibitem{BoufiFadilRoubi2026}
K.~Boufi, A.~Fadil, and A.~Roubi.
\newblock {New formulation with proximal point algorithm and proximal bundle
  methods for equilibrium problems}.
\newblock {\em Optimization}, 2026.
\newblock In press.

\bibitem{BrezisLions78}
H.~Brezis and P.L. Lions.
\newblock {Produits infinis de resolvantes}.
\newblock {\em Israel Journal of Mathematics}, 29(4):329--345, 1978.

\bibitem{BridsonHaefliger1999}
M.~Bridson and A.~Haefliger.
\newblock {\em {Metric Spaces of Non-Positive Curvature}}, volume 319 of {\em
  Grundlehren der mathematischen Wissenschaften}.
\newblock Springer, Berlin, Heidelberg, 1999.

\bibitem{BruhatTits1972}
F.~Bruhat and J.~Tits.
\newblock {Groupes R{\'e}ductifs Sur Un Corps Local}.
\newblock {\em Publications Math{\'e}matiques de l'Institut des Hautes
  {\'E}tudes Scientifiques}, 41(1):5--251, 1972.

\bibitem{BURACHIK2012}
R.~Burachik and G.~Kassay.
\newblock {On a generalized proximal point method for solving equilibrium
  problems in Banach spaces}.
\newblock {\em Nonlinear Analysis: Theory, Methods \& Applications},
  75(18):6456--6464, 2012.

\bibitem{ChaipunyaKohsakaKumam2021}
P.~Chaipunya, F.~Kohsaka, and P.~Kumam.
\newblock {Monotone vector fields and generation of nonexpansive semigroups in
  complete $\mathrm{CAT}(0)$ spaces}.
\newblock {\em Numerical Functional Analysis and Optimization},
  42(9):989--1018, 2021.

\bibitem{COLAO201261}
V.~Colao, G.~L\'opez, G.~Marino, and V.~Mart{\'\i}n-M{\'a}rquez.
\newblock {Equilibrium problems in Hadamard manifolds}.
\newblock {\em Journal of Mathematical Analysis and Applications},
  388(1):61--77, 2012.

\bibitem{CombettesHirstoaga2005}
P.L. Combettes and S.A. Hirstoaga.
\newblock {Equilibrium programming in Hilbert spaces}.
\newblock {\em Journal of Nonlinear and Convex Analysis}, 1(6):117--136, 2005.

\bibitem{Cotrina18}
J.~Cotrina and Y.~Garc{\'\i}a.
\newblock {Equilibrium Problems: Existence Results and Applications}.
\newblock {\em Set-Valued and Variational Analysis}, 26(1):159--177, 2018.

\bibitem{Despres2026}
L.M. Despr\'es.
\newblock {On the proximal point algorithm for strongly quasiconvex
  pseudomonotone equilibrium problems in Hadamard spaces}.
\newblock Master's thesis, TU Darmstadt, 2026.

\bibitem{ESPINOLA2009}
R.~Esp{\'\i}nola and A.~Fern{\'a}ndez-Le{\'o}n.
\newblock {CAT(k)-spaces, weak convergence and fixed points}.
\newblock {\em Journal of Mathematical Analysis and Applications},
  353(1):410--427, 2009.

\bibitem{Fan1961}
K.~Fan.
\newblock {A generalization of Tychonoff's fixed point theorem}.
\newblock {\em Mathematische Annalen}, 142(3):305--310, 1961.

\bibitem{Fan72}
K.~Fan.
\newblock {A Minimax Inequality and Applications}.
\newblock In O.~Sisha, editor, {\em Inequalities}, volume~3, pages 103--113.
  Academic Press, New York, 1972.

\bibitem{FerreiraOliveira2002}
O.P. Ferreira and P.R. Oliveira.
\newblock {Proximal point algorithm on Riemannian manifolds}.
\newblock {\em Optimization}, 51:257--270, 2002.

\bibitem{Flores01}
F.~Flores-Baz\'{a}n.
\newblock {Existence Theorems for Generalized Noncoercive Equilibrium Problems:
  The Quasi-Convex Case}.
\newblock {\em SIAM Journal on Optimization}, 11(3):675--690, 2001.

\bibitem{KohlenbachGerhardy07}
P.~Gerhardy and U.~Kohlenbach.
\newblock {General Logical Metatheorems for Functional Analysis}.
\newblock {\em Transactions of The American Mathematical Society},
  360:2615--2661, 2008.

\bibitem{GLM2023}
S.-M. Grad, F.~Lara, and R.T. Marcavillaca.
\newblock {Relaxed-inertial proximal point type algorithms for quasiconvex
  minimization}.
\newblock {\em Journal of Global Optimization}, 85(3):615--635, 2023.

\bibitem{GradLaraMarca24}
S.-M. Grad, F.~Lara, and R.T. Marcavillaca.
\newblock {Relaxed-Inertial Proximal Point Algorithms for Nonconvex Equilibrium
  Problems with Applications}.
\newblock {\em Journal of Optimization Theory and Applications},
  203(3):2233--2262, 2024.

\bibitem{GLM2025}
S.-M. Grad, F.~Lara, and R.T. Marcavillaca.
\newblock {Strongly Quasiconvex Functions: What We Know (So Far)}.
\newblock {\em Journal of Optimization Theory and Applications}, 205(2), 2025.
\newblock Article no.\ 38.

\bibitem{Gromov1987}
M.~Gromov.
\newblock {Hyperbolic groups}.
\newblock In S.M. Gersten, editor, {\em Essays in group theory}, volume~8 of
  {\em Mathematical Sciences Research Institute Publications}, pages 75--263.
  Springer, New York, 1987.

\bibitem{Gue1991}
O.~G\"uler.
\newblock {On the convergence of the proximal point algorithm for convex
  minimization}.
\newblock {\em SIAM Journal on Control and Optimization}, 29:403--419, 1991.

\bibitem{HadjisavvasLaraMarcavillacaVuong2026}
N.~Hadjisavvas, F.~Lara, R.T. Marcavillaca, and P.T. Vuong.
\newblock {Heavy ball and Nesterov accelerations with Hessian-driven damping
  for nonconvex optimization}.
\newblock {\em Applied Mathematics \& Optimization}, 93(3), 2026.
\newblock Article no.\ 63.

\bibitem{HieuDuongThai2021}
D.~Hieu, H.N. Duong, and B.H. Thai.
\newblock {Convergence of relaxed inertial methods for equilibrium problems}.
\newblock {\em Journal of Applied and Numerical Optimization}, 3(1):215--229,
  2021.

\bibitem{IusemKassaySosa09}
A.~Iusem, G.~Kassay, and W.~Sosa.
\newblock {On certain conditions for the existence of solutions of equilibrium
  problems}.
\newblock {\em Mathematical Programming}, 116:259--273, 2009.

\bibitem{IusemLara19}
A.~Iusem and F.~Lara.
\newblock {Optimality Conditions for Vector Equilibrium Problems with
  Applications}.
\newblock {\em Journal of Optimization Theory and Applications},
  180(1):187--206, 2019.

\bibitem{IusemLara2021}
A.~Iusem and F.~Lara.
\newblock {Proximal Point Algorithms for Quasiconvex Pseudomonotone Equilibrium
  Problems}.
\newblock {\em Journal of Optimization Theory and Applications},
  193({1--3}):443--461, 2022.

\bibitem{IusemLaraMarcavillacaYen2024}
A.~Iusem, F.~Lara, R.T. Marcavillaca, and L.H. Yen.
\newblock {A two-step proximal point algorithm for nonconvex equilibrium
  problems with applications to fractional programming}.
\newblock {\em Journal of Global Optimization}, 90(3):755--779, 2024.

\bibitem{IusemMohebbi2020}
A.~Iusem and V.~Mohebbi.
\newblock {Convergence analysis of the extragradient method for equilibrium
  problems in Hadamard spaces}.
\newblock {\em Computational and Applied Mathematics}, 39, 2020.
\newblock Article no.\ 44.

\bibitem{IusemSosa03}
A.~Iusem and W.~Sosa.
\newblock {Iterative Algorithms for Equilibrium Problems}.
\newblock {\em Optimization}, 52(3):301--316, 2003.

\bibitem{IUSEM2003}
A.~Iusem and W.~Sosa.
\newblock {New existence results for equilibrium problems}.
\newblock {\em Nonlinear Analysis: Theory, Methods \& Applications},
  52(2):621--635, 2003.

\bibitem{IusemSosa10}
A.~Iusem and W.~Sosa.
\newblock {On the proximal point method for equilibrium problems in Hilbert
  spaces}.
\newblock {\em Optimization}, 59(8):1259--1274, 2010.

\bibitem{Jost94}
J.~Jost.
\newblock {Equilibrium maps between metric spaces}.
\newblock {\em Calculus of Variations and Partial Differential Equations},
  2(2):173--204, 1994.

\bibitem{Jov1996}
M.~Jovanovi\v{c}.
\newblock {A note on strongly convex and quasiconvex functions}.
\newblock {\em Mathematical Notes}, 60:584--585, 1996.

\bibitem{Khatibzadeh2016}
H.~Khatibzadeh and V.~Mohebbi.
\newblock {Proximal point algorithm for infinite pseudo-monotone bifunctions}.
\newblock {\em Optimization}, 65(8):1629--1639, 2016.

\bibitem{KHATIBZADEHMOHEBBI2021}
H.~Khatibzadeh and V.~Mohebbi.
\newblock {Monotone and pseudo-monotone equilibrium problems in Hadamard
  spaces}.
\newblock {\em Journal of the Australian Mathematical Society},
  110(2):220--242, 2021.

\bibitem{KIRK2008}
W.A. Kirk and B.~Panyanak.
\newblock {A concept of convergence in geodesic spaces}.
\newblock {\em Nonlinear Analysis: Theory, Methods \& Applications},
  68(12):3689--3696, 2008.

\bibitem{KnasterKuratowskiMazurkiewicz1929}
B.~Knaster, C.~Kuratowski, and S.~Mazurkiewicz.
\newblock {Ein Beweis des Fixpunktsatzes f\"ur n-dimensionale Simplexes}.
\newblock {\em Fundamenta Mathematicae}, 14:132--137, 1929.

\bibitem{Kohlenbach2008}
U.~Kohlenbach.
\newblock {\em {Applied Proof Theory: Proof Interpretations and their Use in
  Mathematics}}.
\newblock Springer Monographs in Mathematics. Springer, Berlin, Heidelberg,
  2008.

\bibitem{Kohlenbach18Survey}
U.~Kohlenbach.
\newblock {Proof-theoretic Methods in Nonlinear Analysis}.
\newblock In B.~Sirakov, P.~Ney de~Souza, and M.~Viana, editors, {\em
  Proceedings of the International Congress of Mathematicians (ICM 2018)},
  volume~2, pages 61--82. World Scientific, 2019.

\bibitem{KohlenbachLN2019}
U.~Kohlenbach, G.~L{\'o}pez-Acedo, and A.~Nicolae.
\newblock {Moduli of regularity and rates of convergence for Fej{\'e}r monotone
  sequences}.
\newblock {\em Israel Journal of Mathematics}, 232(1):261--297, 2019.

\bibitem{Konnov03}
I.V. Konnov.
\newblock {Application of the proximal point method to nonmonotone equilibrium
  problems}.
\newblock {\em Journal of Optimization Theory and Applications}, 119:317--333,
  2003.

\bibitem{KristalyLiLopezAcedoNicolae2016}
A.~Krist\'aly, C.~Li, G.~L\'opez-Acedo, and A.~Nicolae.
\newblock {What do `convexities' imply on Hadamard manifolds?}
\newblock {\em Journal of Optimization Theory and Applications},
  170(3):1068--1074, 2016.

\bibitem{Lara2022}
F.~Lara.
\newblock {On Strongly Quasiconvex Functions: Existence Results and Proximal
  Point Algorithms}.
\newblock {\em Journal of Optimization Theory and Applications},
  192(3):891--911, 2022.

\bibitem{LaraMarca24}
F.~Lara and R.T. Marcavillaca.
\newblock {Bregman proximal point type algorithms for quasiconvex
  minimization}.
\newblock {\em Optimization}, 73(3):497--515, 2024.

\bibitem{LaraMarcavillacaVuong2025}
F.~Lara, R.T. Marcavillaca, and P.T. Vuong.
\newblock {Characterizations, dynamical systems and gradient methods for
  strongly quasiconvex functions}.
\newblock {\em Journal of Optimization Theory and Applications}, 206(3), 2025.
\newblock Article no.\ 60.

\bibitem{LiLopezMartinMarquez2009}
C.~Li, G.~L\'opez, and V.~Mart{\'\i}n-M{\'a}rquez.
\newblock {Monotone vector fields and the proximal point algorithm on Hadamard
  manifolds}.
\newblock {\em Journal of the London Mathematical Society}, 79:663--683, 2009.

\bibitem{Lim1976}
T.-C. Lim.
\newblock {Remarks on some fixed point theorems}.
\newblock {\em Proceedings of the American Mathematical Society}, 60:179--182,
  1976.

\bibitem{Lopez12}
R.~L\'{o}pez.
\newblock Approximations of equilibrium problems.
\newblock {\em SIAM Journal on Control and Optimization}, 50(2):1038--1070,
  2012.

\bibitem{Martinet70}
B.~Martinet.
\newblock {Br\`eve communication. {R\'egularisation} d'in\'equations
  variationnelles par approximations successives}.
\newblock {\em Revue fran\c{c}aise d'informatique et de recherche
  op\'erationnelle. S\'erie rouge}, 4(R3):154--158, 1970.

\bibitem{Moudafi99}
A.~Moudafi.
\newblock {Proximal point algorithm extended to equilibrium problems}.
\newblock {\em Journal of Natural Geometry}, 15(1--2):91--100, 1999.

\bibitem{Moudafi03}
A.~Moudafi.
\newblock {Second-order differential proximal methods for equilibrium
  problems}.
\newblock {\em Journal of Inequalities in Pure and Applied Mathematics}, 4,
  2003.
\newblock Article no.\ 18.

\bibitem{MoudafiThera99}
A.~Moudafi and M.~Th{\'e}ra.
\newblock {Proximal and Dynamical Approaches to Equilibrium Problems}.
\newblock In M.~Th{\'e}ra and R.~Tichatschke, editors, {\em Ill-posed
  Variational Problems and Regularization Techniques}, pages 187--201.
  Springer, Berlin, Heidelberg, 1999.

\bibitem{Nikolaev1995}
I.G. Nikolaev.
\newblock {The tangent cone of an Aleksandrov space of curvature $\leq K$}.
\newblock {\em Manuscripta Mathematica}, 86(1):137--147, 1995.

\bibitem{Oettli97}
W.~Oettli.
\newblock {A remark on vector-valued equilibria and generalized monotonicity}.
\newblock {\em Acta Mathematica Vietnamica}, 22:213--221, 1997.

\bibitem{Ohta2012}
S.~Ohta.
\newblock {Barycenters in Alexandrov spaces of curvature bounded below}.
\newblock {\em Advances in Geometry}, 12(4):571--587, 2012.

\bibitem{Pischke2025}
N.~Pischke.
\newblock {On the proximal point algorithm for strongly quasiconvex functions
  in Hadamard spaces}.
\newblock {\em Optimization Methods and Software}, 40(6):1438--1453, 2025.

\bibitem{Pischke2026}
N.~Pischke.
\newblock {Duality, Fr\'echet differentiability and Bregman distances in
  hyperbolic spaces}.
\newblock {\em Israel Journal of Mathematics}, 2026.
\newblock In press.

\bibitem{Polyak1966}
B.T. Polyak.
\newblock {Existence Theorems and Convergence of Minimizing Sequences in
  Extremum Problems with Restrictions}.
\newblock {\em Soviet Mathematics. Doklady}, 7:72--75, 1966.

\bibitem{Quiroz24}
E.A.~Papa Quiroz.
\newblock {Proximal Point Method for Quasiconvex Functions in Riemannian
  Manifolds}.
\newblock {\em Journal of Optimization Theory and Applications},
  202(3):1268--1285, 2024.

\bibitem{QuirzoAlexCusiMac20}
E.A.~Papa Quiroz, N.~Baygorrea Cusihuallpa, and N.~Maculan.
\newblock {Inexact Proximal Point Methods for Multiobjective Quasiconvex
  Minimization on Hadamard Manifolds}.
\newblock {\em Journal of Optimization Theory and Applications},
  186(3):879--898, 2020.

\bibitem{PapaQuirozOliveira2009}
E.A.~Papa Quiroz and P.R. Oliveira.
\newblock {Proximal point methods for quasiconvex and convex functions with
  Bregman distances on Hadamard manifolds}.
\newblock {\em Journal of Convex Analysis}, 16(1):46--69, 2009.

\bibitem{QO2012a}
E.A.~Papa Quiroz and P.R. Oliveira.
\newblock {An extension of proximal methods for quasiconvex minimization on the
  nonnegative orthant}.
\newblock {\em European Journal of Operational Research}, 216:26--32, 2012.

\bibitem{QO2012b}
E.A.~Papa Quiroz and P.R. Oliveira.
\newblock {Full convergence of the proximal point method for quasiconvex
  functions on Hadamard manifolds}.
\newblock {\em ESAIM: Control, Optimisation and Calculus of Variations},
  18(2):483--500, 2012.

\bibitem{RockTyr76}
R.T. Rockafellar.
\newblock {Monotone Operators and the Proximal Point Algorithm}.
\newblock {\em SIAM Journal on Control and Optimization}, 14(5):877--898, 1976.

\bibitem{Wald36}
A.~Wald.
\newblock {Begr\"undung einer koordinatenlosen Differentialgeometrie der
  Fl\"achen}.
\newblock {\em Ergebnisse eines Mathematischen Kolloquiums}, 7:24--46, 1936.

\end{thebibliography}

\end{document}